 \newtheorem{theorem}{Theorem}[section]
 \newtheorem{corollary}[theorem]{Corollary}
 \newtheorem{conjecture}[theorem]{Conjecture}
 \newtheorem{lemma}[theorem]{Lemma}
 \newtheorem{proposition}[theorem]{Proposition}
 \theoremstyle{definition}
 \newtheorem{example}[theorem]{Example}
 \newtheorem{definition}[theorem]{Definition}
 \newtheorem{remark}[theorem]{Remark}
 \newtheorem{algorithm}[theorem]{Algorithm}
\definecolor{mygreen}{rgb}{0.1,0.8,0.1}
\definecolor{mygray}{rgb}{0.7,0.7,0.7}
\newcommand{\tdot}[3]{\draw [fill=black,color=#3] (#1,#2) circle [radius=0.25];}
\newcommand{\arcdot}[3]{
    \draw [fill=black,color=#3] (#1,0) circle [radius=0.25];
    \node [above, yshift=2pt] at (#1,0) {{#2}};
}
\newcommand{\arcdraw}[2]{\draw [thick, out=-45, in=225] (#1,0) to (#2,0);}
\newcommand{\beq}{\begin{equation}}
\newcommand{\eeq}{\end{equation}}
\newcommand{\beqn}{\begin{eqnarray}}
\newcommand{\eeqn}{\end{eqnarray}}
\newcommand{\PF}[1]{\mathrm{PF}_{#1}}
\newcommand{\MVP}[1]{\mathrm{MVP}_{#1}}
\newcommand{\MotzPF}[1]{\mathrm{MotzPF}_{#1}}
\newcommand{\Motz}[1]{\mathrm{Motz}_{#1}}
\newcommand{\OOPF}[1]{\mathcal{O}_{\PF{#1}}}
\newcommand{\OOMVP}[1]{\mathcal{O}_{\MVP{#1}}}
\newcommand{\OPF}[2]{\mathcal{O}_{\PF{#1}}\left( #2 \right)}\newcommand{\OMVP}[2]{\mathcal{O}_{\MVP{#1}}\left( #2 \right)}
\newcommand{\FibMVP}[2]{\mathcal{O}^{-1}_{\MVP{#1}}\left( #2 \right)}
\newcommand{\Inv}[1]{\mathrm{Inv} \left( #1 \right)}
\newcommand{\LInv}[2]{\mathrm{LeftInv}_{#1} \left( #2 \right)}
\newcommand{\PFtoSub}{\Psi_{\mathrm{PF} \rightarrow \mathrm{Sub}}}
\newcommand{\SubtoPF}{\Psi_{\mathrm{Sub} \rightarrow \mathrm{PF}}}
\newcommand{\dec}[1]{\mathrm{dec}^{#1}}
\newcommand{\bipart}[2]{\mathrm{bipart}^{#1, #2}}
\newcommand{\splitright}[2]{\mathrm{split}^{#1, #2}}
\newcommand{\splitleft}[2]{\overline{\mathrm{split}}^{#1, #2}}
\newcommand{\Sub}[1]{\mathrm{Sub}^1\left( G_{#1} \right)}
\newcommand{\Valid}[1]{\mathrm{Valid}\left( G_{#1} \right)}
\newcommand{\NC}{\mathrm{NonCross}}
\newcommand{\Config}[1]{\mathrm{Config}_{#1}}
\newcommand{\Stable}[1]{\mathrm{Stable}_{#1}}
\newcommand{\minrec}[1]{\mathrm{minrec}\left( #1 \right)}
\newcommand{\Rec}[1]{\mathrm{Rec}_{#1}}
\newcommand{\MinRec}[1]{\mathrm{MinRec}_{#1}}
\newcommand{\CanTop}[1]{\mathrm{CanonTopp}\left( #1 \right)}
\newcommand{\Stab}{\mathrm{Stab}}
\newcommand{\Top}{\mathrm{Topp}}
\newcommand{\Zp}{\mathbb{Z}_+}
\begin{document}

\title{New combinatorial perspectives on MVP parking functions and their outcome map}
\author{Thomas Selig and Haoyue Zhu}
\address{Department of Computing, School of Advanced Technology, Xi'an Jiaotong-Livepool University \\
111, Ren'ai Road, Suzhou 215123, China} 
\date{\today}

\begin{abstract}
In parking problems, a given number of cars enter a one-way street sequentially, and try to park according to a specified preferred spot in the street. Various models are possible depending on the chosen rule for \emph{collisions}, when two cars have the same preferred spot. We study a model introduced by Harris, Kamau, Mori, and Tian in recent work, called the \emph{MVP parking problem}. In this model, priority is given to the cars arriving later in the sequence. When a car finds its preferred spot occupied by a previous car, it ``bumps'' that car out of the spot and parks there. The earlier car then has to drive on, and parks in the first available spot it can find. If all cars manage to park through this procedure, we say that the list of preferences is an MVP parking function.

We study the outcome map of MVP parking functions, which describes in what order the cars end up. In particular, we link the fibres of the outcome map to certain subgraphs of the inversion graph of the outcome permutation. This allows us to reinterpret and improve bounds from Harris \emph{et al.} on the fibre sizes. We then focus on a subset of parking functions, called \emph{Motzkin parking functions}, where every spot is preferred by at most two cars. We generalise results from Harris \emph{et al.}, and exhibit rich connections to Motzkin paths. We also give a closed enumerative formula for the number of MVP parking functions whose outcome is the complete bipartite permutation. Finally, we give a new interpretation of the MVP outcome map in terms of an algorithmic process on recurrent configurations of the Abelian sandpile model.
\end{abstract}

\maketitle




\section{Introduction}\label{sec:intro}

In this section we introduce classical and MVP parking functions, and their outcome maps. Throughout the paper, $n$ represents a positive integer, and we denote $[n] := \{1, \cdots, n\}$.

\subsection{Parking functions and their variations}\label{subsec:PF_intro}

A \emph{parking preference} is a vector $p = (p_1, \cdots, p_n) \in [n]^n$. We think of $p_i$ as denoting the preferred parking spot of car $i$ in a car park with $n$ labelled spots. The car park is one-directional, with cars entering on the left in spot $1$ and driving through to spot $n$ (or until they park). Cars enter sequentially, in order $1, \cdots, n$. If the spot $p_i$ is unoccupied when car $i$ enters, it simply parks there. If this is not the case, then a previous car $j < i$ has already occupied spot $p_i$. We call this a \emph{collision} between cars $i$ and $j$.

In classical parking functions, such collisions are handled by giving priority to the earlier car $j$. This means that car $i$ is forced to drive on, and looks for the first unoccupied spot $k > p_i$. If no such spot exists, then car $i$ exits the car park, having failed to find a spot. We say that $p$ is a \emph{parking function} if all cars manage to park. Parking functions were originally introduced by Konheim and Weiss~\cite{KonWeiss} in their study of hashing functions. Since then, they have been a popular research topic in Mathematics and Computer Science, with rich connections to a variety of fields such as graph theory, representation theory, hyperplane arrangements, discrete geometry, and the Abelian sandpile model~\cite{ChevPyl, CoriPou, CR, DukesSplit, StanHyp, PitStan}. We refer the interested reader to the excellent survey by Yan~\cite{Yan}.

One may notice that the collision rule for parking functions has many possible variations, and indeed many variants of parking functions have been studied in the literature. 
\begin{itemize}
\item \textbf{Defective parking functions}~\cite{CamDefPF}. In this model, $m$ cars enter a one-way street with $n$ parking spots, and try to park following the classical parking rules. If $k$ cars are not able to park, we call the parking preference a \emph{defective parking function of defect $k$}. These correspond to classical parking functions when taking $m=n$ and $k=0$.
\item \textbf{Naples parking functions}~\cite{HarrisNaples1, HarrisNaples2}. In this model, when a car's preferred spot is occupied, it can first reverse up to some fixed number $k$ of spots to try to park, before driving on as in the classical parking function. These correspond to classical parking functions when $k=0$.
\item \textbf{Parking assortments and parking sequences}~\cite{AdeYan, HarrisSeq2, EhrHapp, HarrisSeq1}. In these models we have cars of different sizes, with just enough space in total for all cars to park. A car will try to park in the first available spot on or after its preference, but can only park there if there is sufficient space (i.e.\ the consecutive number of available spots at that location is greater than or equal to the car's size). In parking sequences, if this is not the case, the car immediately gives up and exits, whereas in parking assortments it will drive on and attempt to find a large enough space further along in the car park. Both models correspond to classical parking functions if all cars have size $1$.
\item \textbf{Vector parking functions}, or $\mathbf{u}$-parking functions~\cite{Yan2023, Yin2023}. Given a parking preference $p=(p_1,p_2,\cdots,p_n)$ and a vector $\mathbf{u}=(u_1,u_2,\cdots,u_n)$, we say that $p$ is a $\mathbf{u}$-parking function if its non-decreasing rearrangement $(a_1,a_2,\cdots,a_n)$ satisfies $a_i \leq u_i$ for all $i \in [n]$. These correspond to classical parking functions in the case where $\mathbf{u} = (1, 2, \cdots, n)$ (see e.g.~\cite[Section~1.1]{Yan} for this characterisation of classical parking functions).
\item \textbf{Graphical parking functions} on some graph $G$, or $G$-parking functions~\cite{PostPF}. In this model, cars try to park on vertices of a graph instead of in a one-way street. Like classical parking functions, $G$-parking functions are also connected with the Abelian sandpile model, via a bijection to its recurrent configurations~\cite[Lemma~13.6]{PostPF}.
\item \textbf{Higher-dimensional parking functions}. There are various models of these. The first, called \textbf{$(p,q)$-parking functions}, were introduced by Cori and Poulalhon~\cite{CoriPou}, in connection with the Abelian sandpile model on complete bipartite graphs with one extra distinguished root vertex. Dukes~\cite{DukesSplit} introduced a notion of \textbf{tiered parking functions}, which he connected to the Abelian sandpile model on complete split graphs. Both of these models involve cars of different colours or tiers, with extra conditions on where a car can park depending on its tier/colour. Motivated by previous work in this direction, Snider and Yan~\cite{YanUpq, Yan2023} defined a notion of \textbf{multidimensional $\mathbf{U}$-parking functions}, where $\mathbf{U} = \{(u_{i,j}, v_{i,j})_{1 \leq i \leq p, 1 \leq j \leq q} \}$ is a set of multidimensional vectors. These generalise both the $(p,q)$-parking functions of Cori and Poulalhon, and the (one-dimensional) vector parking functions discussed above (see also~\cite{Khare2014} for a beautiful connection to Gon{\v{c}}arov polynomials).
\end{itemize}

In this paper, we are interested in another variant called \emph{MVP parking functions}, introduced recently by Harris \emph{et al.}~\cite{HarrisMVP}. In this model, if there is a collision between two cars $j < i$, priority is given to the later car $i$. In other words, car $i$ will park in its preferred spot $p_i$. If that spot is already occupied by a previous car $j$, then car $j$ gets ``bumped'' out, and has to drive on. It then (re-)parks in the first available spot $k \geq p_i$. Note that bumpings do not propagate: the ``bumped'' car $j$ does not subsequently bump any other car. If all cars manage to park in this process, we say that $p$ is an MVP parking function.

It is in fact straightforward to check that a parking preference $p$ is an MVP parking function if, and only if, $p$ is a (classical) parking function. Indeed, in both MVP and classical processes, in determining whether all cars can park, the labels of the cars are unimportant: all that matters is which set of spots is occupied at any given time. We denote $\PF{n}$ or $\MVP{n}$ the set of parking functions of length $n$. These sets are the same due to the previous observation, but it will be convenient to use different notation depending on whether we are considering the classical or MVP parking process.

\subsection{The outcome maps}\label{subsec:outcome}

While the sets of MVP and classical parking functions are the same, these two processes differ in their \emph{outcome map}. This map describes where the cars end up. More precisely, if $p$ is a parking function, its \emph{outcome} is a permutation $\pi = \pi_1 \cdots \pi_n$, where for all $i \in [n]$, $\pi_i$ is the label of the car occupying spot $i$ when all cars have parked. The classical, resp.\ MVP, outcome map, denoted $\OOPF{n}$, resp.\ $\OOMVP{n}$, is then the map $p \mapsto \pi$ describing the outcome of the classical, resp.\ MVP, parking process. The following example illustrates the classical and MVP parking processes, and shows how their outcomes may differ.

\begin{example}\label{ex:different_outcomes}
Consider the parking function $p = (3, 1, 1, 2)$. Under the classical parking process in Figure~\ref{fig:exa_OPF}, car $1$ first parks in spot $3$, followed by car $2$ parking in spot $1$. Then car $3$ wishes to park in spot $1$ but cannot do so, so it drives on and parks in spot $2$ (the first available spot at this point). Finally, car $4$ wishes to park in spot $2$. However, $2$ is occupied, so car $4$ drives on: $3$ is also occupied (by car $1$), so car $4$ ends up parking in spot $4$. Finally, we get the outcome $\pi = \OPF{4}{p} = 2314$.

\begin{figure}[ht]

 \centering
 
  \begin{tikzpicture}[scale=0.2]
    
    \node at (-3,1.2) {cars};
    \node at (-3,-4.5) {spots};       
    
    \foreach \x in {1,...,4}
	  \node at (-1+4*\x,-4.5) {$\x$};
    \draw [thick, color=blue] (1,-2)--(1,-3)--(5,-3)--(5,-2);
    \draw [thick, color=blue] (5,-2)--(5,-3)--(9,-3)--(9,-2);
    \draw [thick, color=blue] (9,-2)--(9,-3)--(13,-3)--(13,-2);
    \draw [thick, color=blue] (13,-2)--(13,-3)--(17,-3)--(17,-2);
    \node [draw, circle, color=red, scale=0.8pt] (1) at (11,1.2) {$1$};
    \node at (11, -1.8) {$\downarrow$};
    \node at (21, 0) {$\Longrightarrow$};

    \begin{scope}[shift={(24,0)}]
    \foreach \x in {1,...,4}
	  \node at (-1+4*\x,-4.5) {$\x$};
    \draw [thick, color=blue] (1,-2)--(1,-3)--(5,-3)--(5,-2);
    \draw [thick, color=blue] (5,-2)--(5,-3)--(9,-3)--(9,-2);
    \draw [thick, color=blue] (9,-2)--(9,-3)--(13,-3)--(13,-2);
    \draw [thick, color=blue] (13,-2)--(13,-3)--(17,-3)--(17,-2);
    \node [draw, circle, scale=0.8pt] (1) at (11,-1) {$1$};
    \node [draw, circle, color=red, scale=0.8pt] (1) at (3,1.2) {$2$};
    \node at (3, -1.8) {$\downarrow$};
    \node at (21, 0) {$\Longrightarrow$};
    \end{scope}

    \begin{scope}[shift={(48,0)}]
    \foreach \x in {1,...,4}
	  \node at (-1+4*\x,-4.5) {$\x$};
    \draw [thick, color=blue] (1,-2)--(1,-3)--(5,-3)--(5,-2);
    \draw [thick, color=blue] (5,-2)--(5,-3)--(9,-3)--(9,-2);
    \draw [thick, color=blue] (9,-2)--(9,-3)--(13,-3)--(13,-2);
    \draw [thick, color=blue] (13,-2)--(13,-3)--(17,-3)--(17,-2);
    \node [draw, circle, scale=0.8pt] (1) at (11,-1) {$1$};
    \node [draw, circle, scale=0.8pt] (2) at (3,-1) {$2$};
    \node [draw, circle, color=red, scale=0.8pt] (3) at (3,3) {$3$};
    \node at (6.2, 3) {$\curvearrowright$};
    \end{scope}

    \begin{scope}[shift={(0,-12)}]
    \node at (-3, 0) {$\Longrightarrow$};
    \foreach \x in {1,...,4}
	  \node at (-1+4*\x,-4.5) {$\x$};
    \draw [thick, color=blue] (1,-2)--(1,-3)--(5,-3)--(5,-2);
    \draw [thick, color=blue] (5,-2)--(5,-3)--(9,-3)--(9,-2);
    \draw [thick, color=blue] (9,-2)--(9,-3)--(13,-3)--(13,-2);
    \draw [thick, color=blue] (13,-2)--(13,-3)--(17,-3)--(17,-2);
    \node [draw, circle, scale=0.8pt] (1) at (11,-1) {$1$};
    \node [draw, circle, scale=0.8pt] (2) at (3,-1) {$2$};
    \node [draw, circle, scale=0.8pt] (3) at (7,-1) {$3$};
    \node [draw, circle, color=red, scale=0.8pt] (4) at (7,3) {$4$};
    \node at (11, 3) {$\longrightarrow$};
    \node at (14.5, 2) {$\searrow$};
    \node at (21, 0) {$\Longrightarrow$};
    \end{scope}

    \begin{scope}[shift={(24,-12)}]
    \foreach \x in {1,...,4}
	  \node at (-1+4*\x,-4.5) {$\x$};
    \draw [thick, color=blue] (1,-2)--(1,-3)--(5,-3)--(5,-2);
    \draw [thick, color=blue] (5,-2)--(5,-3)--(9,-3)--(9,-2);
    \draw [thick, color=blue] (9,-2)--(9,-3)--(13,-3)--(13,-2);
    \draw [thick, color=blue] (13,-2)--(13,-3)--(17,-3)--(17,-2);
    \node [draw, circle, scale=0.8pt] (1) at (11,-1) {$1$};
    \node [draw, circle, scale=0.8pt] (2) at (3,-1) {$2$};
    \node [draw, circle, scale=0.8pt] (3) at (7,-1) {$3$};
    \node [draw, circle, scale=0.8pt] (4) at (15,-1) {$4$};
    \end{scope}
    
  \end{tikzpicture}
  \caption{The classical parking process with $p=(3,1,1,2)$ and $\OPF{4}{p}=2314$.\label{fig:exa_OPF}}
\end{figure}
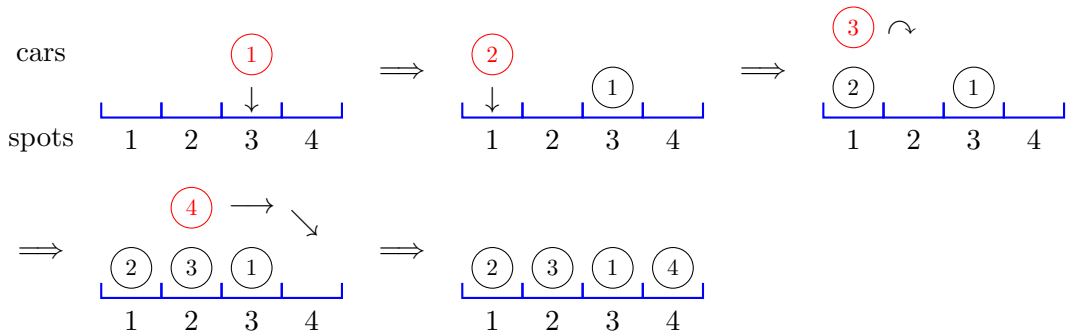

Now consider the same parking function $p$, but for the MVP parking process. In Figure~\ref{fig:exa_OMVP}, again,  cars $1$ and $2$ park in spots $3$ and $1$ respectively. Now car $3$ arrives, and sees car $2$ in its preferred spot (spot $1$). It bumps car $2$ out of spot $1$, forcing it to drive on. Spot $2$ is available, so car $2$ parks there. Finally car $4$ arrives and sees car $2$ in its preferred spot (spot $2$). It bumps car $2$, forcing it to drive on and park in the only remaining spot, which is spot $4$. Finally, we get the outcome $\pi = \OMVP{4}{p} = 3412$.

\begin{figure}[ht]

 \centering
 
  \begin{tikzpicture}[scale=0.2]
    
    \node at (-3,1.2) {cars};
    \node at (-3,-4.5) {spots};       
    
    \foreach \x in {1,...,4}
	  \node at (-1+4*\x,-4.5) {$\x$};
    \draw [thick, color=blue] (1,-2)--(1,-3)--(5,-3)--(5,-2);
    \draw [thick, color=blue] (5,-2)--(5,-3)--(9,-3)--(9,-2);
    \draw [thick, color=blue] (9,-2)--(9,-3)--(13,-3)--(13,-2);
    \draw [thick, color=blue] (13,-2)--(13,-3)--(17,-3)--(17,-2);
    \node [draw, circle, color=red, scale=0.8pt] (1) at (11,1.2) {$1$};
    \node at (11, -1.8) {$\downarrow$};
    \node at (21, 0) {$\Longrightarrow$};

    \begin{scope}[shift={(24,0)}]
    \foreach \x in {1,...,4}
	  \node at (-1+4*\x,-4.5) {$\x$};
    \draw [thick, color=blue] (1,-2)--(1,-3)--(5,-3)--(5,-2);
    \draw [thick, color=blue] (5,-2)--(5,-3)--(9,-3)--(9,-2);
    \draw [thick, color=blue] (9,-2)--(9,-3)--(13,-3)--(13,-2);
    \draw [thick, color=blue] (13,-2)--(13,-3)--(17,-3)--(17,-2);
    \node [draw, circle, scale=0.8pt] (1) at (11,-1) {$1$};
    \node [draw, circle, color=red, scale=0.8pt] (1) at (3,1.2) {$2$};
    \node at (3, -1.8) {$\downarrow$};
    \node at (21, 0) {$\Longrightarrow$};
    \end{scope}

    \begin{scope}[shift={(48,0)}]
    \foreach \x in {1,...,4}
	  \node at (-1+4*\x,-4.5) {$\x$};
    \draw [thick, color=blue] (1,-2)--(1,-3)--(5,-3)--(5,-2);
    \draw [thick, color=blue] (5,-2)--(5,-3)--(9,-3)--(9,-2);
    \draw [thick, color=blue] (9,-2)--(9,-3)--(13,-3)--(13,-2);
    \draw [thick, color=blue] (13,-2)--(13,-3)--(17,-3)--(17,-2);
    \node [draw, circle, scale=0.8pt] (1) at (11,-1) {$1$};
    \node [draw, circle, scale=0.8pt] (2) at (3,-1) {$2$};
    \node [draw, circle, color=red, scale=0.8pt] (3) at (3,3) {$3$};
    \node at (6.2, -0.5) {$\curvearrowright$};
    \end{scope}

    \begin{scope}[shift={(0,-12)}]
    \node at (-3, 0) {$\Longrightarrow$};
    \foreach \x in {1,...,4}
	  \node at (-1+4*\x,-4.5) {$\x$};
    \draw [thick, color=blue] (1,-2)--(1,-3)--(5,-3)--(5,-2);
    \draw [thick, color=blue] (5,-2)--(5,-3)--(9,-3)--(9,-2);
    \draw [thick, color=blue] (9,-2)--(9,-3)--(13,-3)--(13,-2);
    \draw [thick, color=blue] (13,-2)--(13,-3)--(17,-3)--(17,-2);
    \node [draw, circle, scale=0.8pt] (1) at (11,-1) {$1$};
    \node [draw, circle, scale=0.8pt] (2) at (7,-1) {$2$};
    \node [draw, circle, scale=0.8pt] (3) at (3,-1) {$3$};
    \node [draw, circle, color=red, scale=0.8pt] (4) at (7,3) {$4$};
    \draw [->, out=30, in=150] (8.2,0.8) to (11.5,1);
    \draw [->, out=0, in=135] (12,1) to (15.5,-0.5);
    \node at (21, 0) {$\Longrightarrow$};
    \end{scope}

    \begin{scope}[shift={(24,-12)}]
    \foreach \x in {1,...,4}
	  \node at (-1+4*\x,-4.5) {$\x$};
    \draw [thick, color=blue] (1,-2)--(1,-3)--(5,-3)--(5,-2);
    \draw [thick, color=blue] (5,-2)--(5,-3)--(9,-3)--(9,-2);
    \draw [thick, color=blue] (9,-2)--(9,-3)--(13,-3)--(13,-2);
    \draw [thick, color=blue] (13,-2)--(13,-3)--(17,-3)--(17,-2);
    \node [draw, circle, scale=0.8pt] (1) at (11,-1) {$1$};
    \node [draw, circle, scale=0.8pt] (2) at (15,-1) {$2$};
    \node [draw, circle, scale=0.8pt] (3) at (3,-1) {$3$};
    \node [draw, circle, scale=0.8pt] (4) at (7,-1) {$4$};
    \end{scope}
    
  \end{tikzpicture}
  \caption{The MVP parking process with $p=(3,1,1,2)$ and $\OMVP{4}{p}=3412$.\label{fig:exa_OMVP}}
\end{figure}
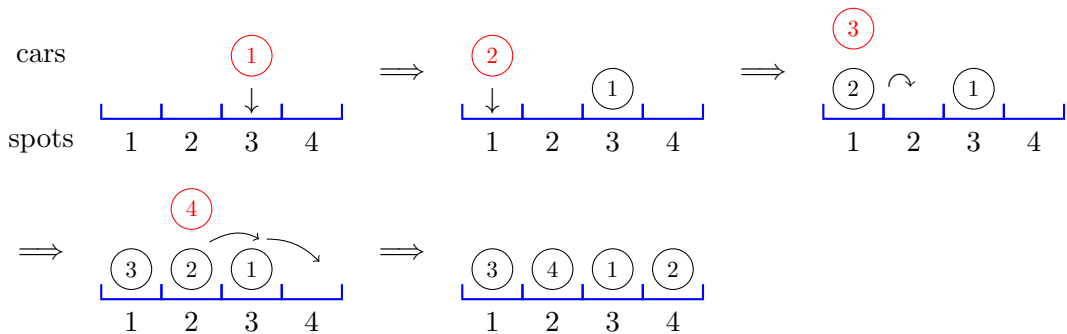

\end{example}

\subsection{Content of the paper}\label{subsec:organisation}

In this paper, our main goal is to study the \emph{fibres} of the MVP outcome map. That is, for a given, fixed permutation $\pi \in S_n$, we are interested in the set $\FibMVP{n}{\pi}$ of parking functions whose (MVP) outcome is the permutation $\pi$. The paper is organised as follows. 

In Section~\ref{sec:general_case}, we provide an interpretation of the MVP outcome fibre set $\FibMVP{}{\pi}$ in terms of certain subgraphs, called $1$-subgraphs, of the corresponding permutation inversion graph $G_{\pi}$ (see Section~\ref{subsec:inversion_graph} for a definition of these). We describe mappings from parking functions to $1$-subgraphs (Definition~\ref{def:PFtoSub}) and from $1$-subgraphs to parking functions (Definition~\ref{def:SubtoPF}), and show that these are inverse to each other when restricted to so-called \emph{valid} $1$-subgraphs (Theorem~\ref{thm:MVPPF_subgraphs}). We use this characterisation in Section~\ref{subsec:bounds} to give improved upper (Proposition~\ref{pro:upper_bound}) and lower (Proposition~\ref{pro:lower_bound}) bounds on the fibre sizes, by providing necessary or sufficient conditions for a $1$-subgraph to be valid.

Section~\ref{sec:MotzPF} is dedicated to a study of a certain subset of MVP parking functions in which each parking space is preferred by at most two cars. We call these \emph{Motzkin} parking functions due to a rich connection to Motzkin paths (see Theorem~\ref{thm:MotzPF_characterisation} and Theorem~\ref{thm:bij_MotzPFequiv_Motz}). These results generalise a bijection between Motzkin paths and parking functions whose MVP outcome is the decreasing permutation $\dec{n}:=n(n-1)\cdots1$ established by Harris \emph{et al.}~\cite[Theorem~4.2]{HarrisMVP}. Inspired by this, we apply the constructions of Section~\ref{sec:general_case} to obtain a bijection between the decreasing fibre set $\FibMVP{n}{\dec{n}}$ and the set of non-crossing matching arc-diagrams on $n$ vertices (Theorem~\ref{thm:nonCrossing_dec}).
In Section~\ref{sec:m_n} we study another family of MVP parking function, namely those whose outcome permutation $\bipart{m}{n}$ corresponds to the complete bipartite graph $K_{m,n}$, i.e.\ $\bipart{m}{n}:=(n+1)(n+2)\cdots (n+m) 12 \cdots n$. We give a closed formula enumerating the MVP outcome fibre in this case when $n=2$ (Theorem~\ref{thm:enum_m_2}).

Section~\ref{sec:asm_prelims} connects MVP parking functions with recurrent configurations of the \emph{Abelian sandpile model} (ASM) on complete graphs (Theorem~\ref{thm:bij_rec_pf}). We provide an algorithmic process (Algorithm~\ref{algo:c-minrec}) to track the bumpings that occur in the MVP parking process through the corresponding recurrent configuration. This then allows us to interpret the MVP outcome map by combining this process with a classical notion of ``canonical toppling'' in the ASM (Theorem~\ref{thm:minrec_outcome_CanonTopp}). 
Finally, Section~\ref{sec:future} summarises our results and discusses some possible future research directions.


\section{General case}\label{sec:general_case}

In this section we study the MVP outcome map in the general setting. We will give an interpretation of the fibres in terms of certain subgraphs of the inversion graph of the outcome permutation.

\subsection{Inversion graphs and subgraphs}\label{subsec:inversion_graph}

Given a permutation $\pi = \pi_1 \cdots \pi_n \in S_n$, we say that a pair $(j, i)$ is an \emph{inversion} of $\pi$ if $j < i$ and $\pi_j > \pi_i$. We denote $\Inv{\pi}$ the set of inversions of $\pi$. For any $i \in [n]$ we define the set of \emph{left-inversions} at $i$ in $\pi$ by $\LInv{\pi}{i} := \{ j \in [n]; \, (j,i) \in \Inv{\pi}\}$. The \emph{inversion graph} of a permutation $\pi$, denoted $G_{\pi}$, is the graph with vertex set $[n]$ and edge set $\Inv{\pi}$.

It will be convenient to represent permutations and their inversion graphs graphically in a $n \times n$ grid. We label rows and columns $1, \cdots, n$ from top to bottom and left to right respectively. The graphical representation of a permutation $\pi$ consists in placing a dot in each row $\pi_i$ and column $i$. The edges of the corresponding inversion graph are then pairs of dots where one is above and to the right of the other. We may sometimes think of edges $(j, i)$ with $j < i$ as directed from $j$ to $i$ (i.e.\ from left to right), and refer to them as \emph{arcs}.

\begin{example}\label{ex:inversions}
Consider the permutation $\pi = 42315$. The inversions are the pairs of indices $(1,2)$, $(1, 3)$, $(1, 4)$, $(2, 4)$ and $(3, 4)$. Figure~\ref{fig:inversion_graph} shows the graphical representations of $\pi$ and of its inversion graph.

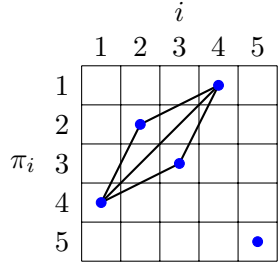
\begin{figure}[ht]
\centering
\begin{tikzpicture}[scale=0.26]
\draw [step=2] (2,2) grid (12,-8);
\node at (-1,-3) {$\pi_i$};
\node at (7, 4.8) {$i$};
\foreach \x in {1,...,5}
  \node at (1+2*\x, 3) {$\x$};
\foreach \y in {1,...,5}
  \node at (1, 3-2*\y) {$\y$};
\draw [thick] (3,-5)--(5,-1);
\draw [thick] (3,-5)--(7,-3);
\draw [thick] (5,-1)--(9,1);
\draw [thick] (3,-5)--(9,1);
\draw [thick] (7,-3)--(9,1);
\tdot{3}{-5}{blue}
\tdot{5}{-1}{blue}
\tdot{7}{-3}{blue}
\tdot{9}{1}{blue}
\tdot{11}{-7}{blue}
\end{tikzpicture}
\caption{The permutation $\pi=42315$ and its inversion graph $G_{\pi}$. \label{fig:inversion_graph}}
\end{figure}
\end{example}

We will use certain subgraphs of inversion graphs to represent MVP parking functions. Here, subgraphs are considered to be vertex-spanning, so that a subgraph is simply a subset of edges of the original graph.
For a permutation $\pi$ and corresponding inversion graph $G_{\pi}$, we define $\Sub{\pi} := \{ S \subseteq \Inv{\pi}; \ \forall i \in [n],\, \vert \{ j \in [n];\, (j,i) \in S \} \vert \leq 1  \}$. 
In words, this is the set of subgraphs of $G_{\pi}$ where the number of incident \emph{left-arcs} at any vertex is at most $1$. We refer to elements of $\Sub{\pi}$ as \emph{$1$-subgraphs} of $G_{\pi}$. Figure~\ref{fig:1-subgraphs} below shows the three $1$-subgraphs of $G_{231}$ in the three left-most graphs. The right-most graph (crossed-out) is not a $1$-subgraph, since the vertex in column $i = 3$ has two incident left-arcs.

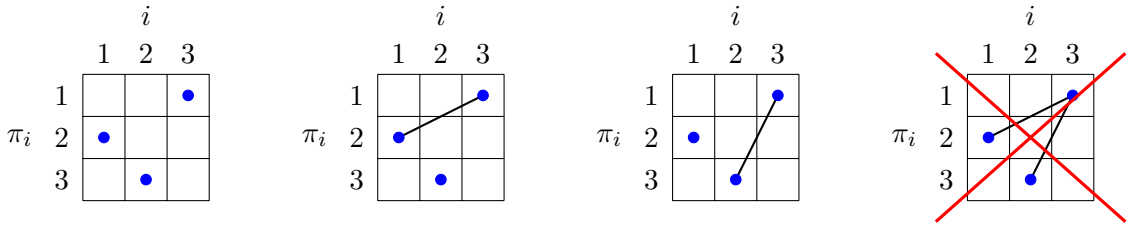
\begin{figure}[ht]

\centering

\begin{tikzpicture}[scale=0.28]
\draw [step=2] (2,2) grid (8,-4);
\node at (-1, -1) {$\pi_i$};
\node at (5, 4.8) {$i$};
\foreach \x in {1,...,3}
  \node at (1+2*\x, 3) {$\x$};
\foreach \y in {1,...,3}
  \node at (1, 3-2*\y) {$\y$};
\tdot{3}{-1}{blue}
\tdot{5}{-3}{blue}
\tdot{7}{1}{blue}

\begin{scope}[shift={(14,0)}]
\draw [step=2] (2,2) grid (8,-4);
\node at (-1, -1) {$\pi_i$};
\node at (5, 4.8) {$i$};
\foreach \x in {1,...,3}
  \node at (1+2*\x, 3) {$\x$};
\foreach \y in {1,...,3}
  \node at (1, 3-2*\y) {$\y$};
\draw [thick] (3,-1)--(7,1);
\tdot{3}{-1}{blue}
\tdot{5}{-3}{blue}
\tdot{7}{1}{blue}
\end{scope}

\begin{scope}[shift={(28,0)}]
\draw [step=2] (2,2) grid (8,-4);
\node at (-1, -1) {$\pi_i$};
\node at (5, 4.8) {$i$};
\foreach \x in {1,...,3}
  \node at (1+2*\x, 3) {$\x$};
\foreach \y in {1,...,3}
  \node at (1, 3-2*\y) {$\y$};
\draw [thick] (5,-3)--(7,1);
\tdot{3}{-1}{blue}
\tdot{5}{-3}{blue}
\tdot{7}{1}{blue}
\end{scope}

\begin{scope}[shift={(42,0)}]
\draw [step=2] (2,2) grid (8,-4);
\node at (-1, -1) {$\pi_i$};
\node at (5, 4.8) {$i$};
\foreach \x in {1,...,3}
  \node at (1+2*\x, 3) {$\x$};
\foreach \y in {1,...,3}
  \node at (1, 3-2*\y) {$\y$};
\draw [thick] (3,-1)--(7,1);
\draw [thick] (5,-3)--(7,1);
\tdot{3}{-1}{blue}
\tdot{5}{-3}{blue}
\tdot{7}{1}{blue}
\draw[very thick, red] (0.5,-5)--(9.5,3);
\draw[very thick, red] (0.5,3)--(9.5,-5);
\end{scope}
\end{tikzpicture}

\caption{The three $1$-subgraphs of the inversion graph $G_{231}$; the right-most is not a $1$-subgraph as there are two left edges incident to $i = 3$.\label{fig:1-subgraphs}}

\end{figure}

\subsection{Connecting $1$-subgraphs to the MVP outcome map}\label{subsec:MVP_outcome_subgraphs}

In this section, we explain how to represent parking functions in the MVP outcome fibre of a given permutation $\pi$ via $1$-subgraphs of the permutation's inversion graph, and vice versa.

\begin{definition}\label{def:PFtoSub}
Let $\pi \in S_n$ be a permutation. We define a map $\PFtoSub: \FibMVP{n}{\pi} \rightarrow \Sub{\pi}$, $p \mapsto S(p)$ as follows:
\beq\label{eq:PFtoSub}
S(p) := \{ (j,i) \in \Inv{\pi}; \, p_{\pi_i} = j \}.
\eeq
\end{definition}

In words, if the car $\pi_i$ that ends up in spot $i$ initially preferred some spot $j < i$ in the parking function $p$ (so was eventually bumped to $i$ in the MVP parking process), then we put an edge from $j$ to $i$ in $S(p)$. Note that for this bumping to occur, the car $\pi_j$ which eventually ends up in spot $j$ must enter the car park after car $\pi_i$, which exactly means that $(j,i)$ is an inversion of $\pi$ (i.e.\ an edge of $G_{\pi}$). Moreover, since exactly one car ends up in any given spot $i$, there is at most one left-arc incident to $i$ in $S(p)$ (in the case where $p_{\pi_i} = i$, i.e.\ the car that ends up in $i$ wanted to park there, we have no incident left-arc), so that $S(p)$ is indeed a $1$-subgraph of $G_{\pi}$, as desired. We can then define an inverse for this map from parking functions to sub-graphs, as follows.

\begin{definition}\label{def:SubtoPF}
Let $\pi \in S_n$ be a permutation. We define a map $\SubtoPF: \Sub{\pi} \rightarrow \MVP{n}$, $S \mapsto p = p(S)$ as follows:
\beq\label{eq:SubtoPF}
p_{\pi_i}=
\begin{cases}
i \qquad \text{ if }\quad \vert \{ j \in [n]; (j,i) \in S \} \vert =0, \\
j \qquad \text{ if }\quad j\text{ is the unique } j<i \text{ such that }(j,i)\in S.
\end{cases}
\eeq
\end{definition}

In words, if there is no left-arc incident to $i$ in $S$, we set $p_{\pi_i} = i$. Otherwise, since $S$ is a $1$-subgraph, there is a unique left-arc $(j,i)$ incident to $i$ in $S$, and we set $p_{\pi_i} = j$. 

\begin{example}\label{ex:SubtoPF}
Consider the permutation $\pi = 34125$ and the $1$-subgraph $S \in \Sub{\pi}$ consisting of the arcs $(2,3)$ and $(2,4)$ as in Figure~\ref{fig:example_SubtoPF}. We calculate $p := \SubtoPF(S)$ as follows. First, let us determine $p_1$ the preference of car $1$. Note that $1 = \pi_3$ here, so we are looking at the vertex in row $1$, column $3$ (labelled $3$ in our inversion graph labelling). Here there is a left-arc incident to this vertex, whose left end-point is in column $2$, yielding $p_1 = 2$. Similarly, $p_2 = 2$ also, since there is a left arc incident to the dot in row $2$, column $4$, whose left-end point is also in column $2$. However, the dot in row $3$, column $1$, has no incident left-arc, and neither does the dot in row $4$, column $2$, or the dot in row $5$, column $5$. We therefore set $p_3 = 1$, $p_4 = 2$, and $p_5$ = $5$. Finally, we get the preference $p(S)=(p_1, p_2, p_3, p_4, p_5) = (2,2,1,2,5)$.

\begin{figure}[ht]
\centering
\begin{tikzpicture}[scale=0.255]
\draw [step=2] (2,2) grid (12,-8);
\node at (-1, -1.2) {$\pi_i$};
\node at (-1, -3) {$\downarrow$};
\node at (-1.5, -4.8) {cars};
\node at (7, 4.8) {$i \rightarrow$ spots};
\foreach \x in {1,...,5}
  \node at (1+2*\x, 3) {$\x$};
\foreach \y in {1,...,5}
  \node at (1, 3-2*\y) {$\y$};
\draw [thick] (5,-5)--(7,1);
\draw [thick] (5,-5)--(9,-1);
\tdot{3}{-3}{blue}
\tdot{5}{-5}{blue}
\tdot{7}{1}{blue}
\tdot{9}{-1}{blue}
\tdot{11}{-7}{blue}
\end{tikzpicture}
\caption{A $1$-subgraph $S$ of $G_{34125}$; the corresponding parking function is $\SubtoPF(S) = (2,2,1,2,5)$.\label{fig:example_SubtoPF}}
\end{figure}

\end{example}

Our main result of this Section~\ref{sec:general_case} is the following.

\begin{theorem}\label{thm:MVPPF_subgraphs}
Let $\pi \in S_n$ be a permutation. The maps $\PFtoSub: \FibMVP{n}{\pi} \rightarrow \Sub{\pi}$ and $\SubtoPF: \Sub{\pi} \rightarrow \MVP{n}$ are injective. Moreover, for any $p \in \FibMVP{n}{\pi}$, we have $\SubtoPF(\PFtoSub(p))=p$.
\end{theorem}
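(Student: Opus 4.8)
The plan is to prove the composition identity $\SubtoPF \circ \PFtoSub = \mathrm{id}$ first, since a map admitting a left inverse is automatically injective; this disposes of the injectivity of $\PFtoSub$ at once, leaving only the injectivity of $\SubtoPF$ to treat separately. Two elementary properties of the MVP process underpin the whole argument, both implicit in the paragraph following Definition~\ref{def:PFtoSub}: (i) a car never ends up strictly before its preferred spot, so that $p_{\pi_i} \leq i$ for every $i$; and (ii) if the car $\pi_i$ ending in spot $i$ prefers a spot $j = p_{\pi_i} < i$, then $(j,i) \in \Inv{\pi}$ (the car $\pi_j$ eventually occupying $j$ must enter after $\pi_i$). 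I will also freely use that $\PFtoSub$ lands in $\Sub{\pi}$ and $\SubtoPF$ in $\MVP{n}$, as already justified in the text.

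For the composition identity, I would fix $p \in \FibMVP{n}{\pi}$, set $S := \PFtoSub(p)$ and $q := \SubtoPF(S)$, and compare $q_{\pi_i}$ with $p_{\pi_i}$ one spot at a time, splitting on whether vertex $i$ carries a left-arc in $S$. If $i$ has no incident left-arc, then $\SubtoPF$ sets $q_{\pi_i} = i$, and I must exclude $p_{\pi_i} < i$: were $p_{\pi_i} = j < i$, then facts (i) and (ii) would force $(j,i) \in \Inv{\pi}$, whence $(j,i) \in S$ by the definition of $\PFtoSub$, contradicting the absence of a left-arc at $i$; thus $p_{\pi_i} = i = q_{\pi_i}$. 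If instead $i$ carries a left-arc $(j,i)$ — necessarily unique since $S \in \Sub{\pi}$ — then $\SubtoPF$ sets $q_{\pi_i} = j$, while $(j,i) \in S$ means exactly $p_{\pi_i} = j$ by the definition of $\PFtoSub$; so again $q_{\pi_i} = p_{\pi_i}$. Hence $q = p$.

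For the injectivity of $\SubtoPF$, I would show that $S$ is recoverable from $p = \SubtoPF(S)$. The key observation is that a $1$-subgraph is completely specified by recording, at each vertex $i$, its unique incident left-arc (or its absence), because every edge of $G_{\pi}$ is the left-arc of its right endpoint. From the defining formula of $\SubtoPF$, one has $p_{\pi_i} = i$ precisely when $i$ has no incident left-arc (a genuine left-arc $(j,i)$ has $j < i$, giving $p_{\pi_i} = j \neq i$), and otherwise the left-arc at $i$ is exactly $(p_{\pi_i}, i)$. Thus $p$ determines the full left-arc data, hence $S$ itself, and $\SubtoPF(S) = \SubtoPF(S')$ forces $S = S'$.

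The only genuinely non-formal step is the no-left-arc case of the composition identity, where a strictly smaller preference must be ruled out: this is exactly the point at which property (ii) — the inversion fact — is indispensable, and it is the crux of why the two constructions are mutually inverse. Everything else reduces to bookkeeping driven by the fact that $\Sub{\pi}$ encodes, and the formula of $\SubtoPF$ decodes, precisely the single-left-arc data at each vertex.
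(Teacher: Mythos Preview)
Your proof is correct and follows essentially the same approach as the paper: both establish the composition identity first (deducing injectivity of $\PFtoSub$ as a corollary) via a two-case analysis, then handle injectivity of $\SubtoPF$ separately. The only cosmetic differences are that the paper splits cases according to whether car $\pi_i$ attains its preference (rather than according to the left-arc structure at $i$), and proves injectivity of $\SubtoPF$ by a direct contrapositive argument (take $S \neq S'$ and exhibit a coordinate where the images differ) rather than by your recoverability formulation; these are equivalent phrasings of the same idea.
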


\begin{proof}
We first show that for any $p \in \FibMVP{n}{\pi}$, we have $\SubtoPF(\PFtoSub(p))=p$. This follows essentially from the constructions in Definitions~\ref{def:PFtoSub} and \ref{def:SubtoPF}. Let $p \in \FibMVP{n}{\pi}$, and define $S := \PFtoSub(p)$ and $p' := \SubtoPF(S)$. We wish to show that $p' = p$. Fix some spot $i \in [n]$, and consider the car $\pi_i$ which ends up in spot $i$ in the MVP parking process for $p$. There are two cases to consider.

\textbf{Case 1:} car $\pi_i$ ended up in its preferred spot. By definition, this means that $p_{\pi_i} = i$. Moreover, in this case, by construction there is no left edge incident to $i$ in $S$ (the car ending up in spot $i$ originally preferred that spot). Therefore, by Definition~\ref{def:SubtoPF}, we have $p'_{\pi_i} = i = p_{\pi_i}$, as desired.

\textbf{Case 2:} car $\pi_i$ had initially preferred some spot $j < i$, and finally ended up in $i$ after (possibly multiple) bumpings. Then by definition we have $p_{\pi_i} = j$. Moreover, in this case, by construction we put an edge $(j,i)$ in $S$. Finally, by Definition~\ref{def:SubtoPF}, we have $p'_{\pi_i} = j = p_{\pi_i}$, as desired.

The equality $\SubtoPF(\PFtoSub(p))=p$ for all $p \in \FibMVP{n}{\pi}$ further implies the injectivity of the map $\PFtoSub$ on $\FibMVP{n}{\pi}$ since it has an inverse. It remains to be shown that $\SubtoPF$ is injective on its domain $\Sub{\pi}$. For this, fix $S, S' \in \Sub{\pi}$ two $1$-subgraphs such that $S \neq S'$. Define $p := \SubtoPF(S)$ and $p' := \SubtoPF(S')$. We wish to show that $p \neq p'$. Since $S \neq S'$, we may assume that there exists some edge $(j,i)$ with $j<i$ which is in $S$ but not in $S'$. By construction we have $p_{\pi_i} = j$ in this case. Moreover, depending on whether $i$ has an incident left edge in $S'$, say to $j' \neq j$, or whether $i$ has no incident edge, we will have either $p'_{\pi_i} = j'$ or $p'_{\pi_i} = i$. In both cases $p'_{\pi_i} \neq p_{\pi_i}$, and thus $p' \neq p$, as desired.
\end{proof}

As such, for a given permutation $\pi$, the map $\PFtoSub$ induces a bijection from the fibre $\FibMVP{n}{\pi}$ unto its image $\PFtoSub \left( \FibMVP{n}{\pi} \right)$. The question of calculating the fibre $\FibMVP{n}{\pi}$ then becomes that of calculating the image set, or, equivalently, calculating the set of $1$-subgraphs $S$ of $G_{\pi}$  such that $\OMVP{n}{\SubtoPF(S) }= \pi$.

\begin{definition}\label{def:valid_subgraph}
We say that a $1$-subgraph $S \in \Sub{\pi}$ is \emph{valid} if $\OMVP{n}{\SubtoPF(S) }= \pi$. Otherwise, we say that $S$ is \emph{invalid}. We denote $\Valid{\pi}$ the set of valid $1$-subgraphs of $G_{\pi}$.
\end{definition}

With this terminology, Theorem~\ref{thm:MVPPF_subgraphs} states that the map $\PFtoSub$ is a bijection from the fibre set $\FibMVP{n}{\pi}$ to the set $\Valid{\pi}$ of valid $1$-subgraphs of $G_{\pi}$. In particular, we get the following enumeration and upper bound.

\begin{corollary}\label{cor:Harris_bound}
For any permutation $\pi \in S_n$, we have 
$$\left\vert \FibMVP{n}{\pi} \right\vert = \left\vert \Valid{\pi} \right\vert \leq \left\vert \Sub{\pi} \right\vert = \prod\limits_{i \in [n]} \big( 1 + \vert \LInv{\pi}{i} \vert \big).$$
\end{corollary}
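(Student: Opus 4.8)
The plan is to establish the chain of (in)equalities one piece at a time, each being short given the machinery already set up. The first equality $\vert \FibMVP{n}{\pi} \vert = \vert \Valid{\pi} \vert$ is immediate from Theorem~\ref{thm:MVPPF_subgraphs}: as observed in the discussion following Definition~\ref{def:valid_subgraph}, that theorem asserts precisely that $\PFtoSub$ restricts to a bijection from $\FibMVP{n}{\pi}$ onto $\Valid{\pi}$, so the two sets have equal cardinality.

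For the middle inequality I would simply invoke the inclusion $\Valid{\pi} \subseteq \Sub{\pi}$, which holds by Definition~\ref{def:valid_subgraph} since a valid $1$-subgraph is in particular a $1$-subgraph. Passing to cardinalities gives $\vert \Valid{\pi} \vert \leq \vert \Sub{\pi} \vert$.

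The only genuinely combinatorial step is the final equality $\vert \Sub{\pi} \vert = \prod_{i \in [n]} \big( 1 + \vert \LInv{\pi}{i} \vert \big)$. The key observation is that the edge set $\Inv{\pi}$ is partitioned according to the right endpoint of each edge: every inversion $(j,i)$ satisfies $j < i$, so it is uniquely associated with its larger index $i$, and the block of inversions with right endpoint $i$ is exactly $\{ (j,i); \, j \in \LInv{\pi}{i} \}$, of size $\vert \LInv{\pi}{i} \vert$. By the defining constraint of $\Sub{\pi}$, a $1$-subgraph may contain at most one edge with any given right endpoint. Hence specifying an element of $\Sub{\pi}$ amounts to choosing, independently for each $i \in [n]$, either to include no edge with right endpoint $i$ (one option) or to include exactly one of the $\vert \LInv{\pi}{i} \vert$ available such edges, giving $1 + \vert \LInv{\pi}{i} \vert$ local choices. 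Formally one sets up a bijection between $\Sub{\pi}$ and the product set $\prod_{i \in [n]} \big( \{ \varnothing \} \cup \LInv{\pi}{i} \big)$, and the product rule then yields the stated count.

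There is no real obstacle here: the corollary is a bookkeeping consequence of Theorem~\ref{thm:MVPPF_subgraphs} together with a product-rule count of $1$-subgraphs. If anything, the only point needing a moment's care is checking that the left-arc choices at distinct vertices do not interact, but this is guaranteed by the partition of $\Inv{\pi}$ by right endpoint, since the at-most-one-left-arc condition is imposed separately at each vertex.
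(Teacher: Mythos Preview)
Your proposal is correct and follows exactly the approach the paper intends: the corollary is stated without proof, immediately after noting that Theorem~\ref{thm:MVPPF_subgraphs} makes $\PFtoSub$ a bijection onto $\Valid{\pi}$, so the paper treats all three (in)equalities as immediate. Your write-up simply spells out the product-rule count of $\Sub{\pi}$ that the paper leaves implicit.
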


In fact, the upper bound of Corollary~\ref{cor:Harris_bound} was already established by Harris \emph{et al.}~\cite[Theorem~3.1]{HarrisMVP}, although its formulation in terms of inversions is new (in the cited work, it is stated in terms of cars/spots). In the same paper (\cite[Theorem~3.2]{HarrisMVP}), the authors also determined a full characterisation of when this upper bound is tight, which we again reformulate in the subgraph context. This characterisation is in terms of permutation \emph{patterns}.

Let $\pi \in S_n$, $\tau \in S_k$ be two permutations with $k \leq n$. We say that $\pi$ \emph{contains} the \emph{pattern} $\tau$ if there exist indices $i_1 < i_2 < \cdots < i_k$ such that $\pi_{i_1}, \pi_{i_2}, \cdots, \pi_{i_k}$ appear in the same relative order as $\tau$. If $\pi$ does not contain the pattern $\tau$, we say that $\pi$ \emph{avoids} $\tau$. For example, the permutation $\pi = 561243$ contains two occurrences of the pattern $321$ (in \textbf{bold}): $\mathbf{5}612\mathbf{43}$, $5\mathbf{6}12\mathbf{43}$. However, it avoids the pattern $4321$ since there is no sequence of $4$ numbers in decreasing order. The inversion graph $G_{\pi}$ of a permutation $\pi$ is acyclic if, and only if, $\pi$ avoids the patterns $321$ and $3412$, which correspond to cycles of length $3$ and $4$ respectively (see e.g.~\cite{BV}).

\begin{theorem}\label{thm:MVPPF_perm_patterns}
Let $\pi \in S_n$ be a permutation. Then all $1$-subgraphs of $G_{\pi}$ are valid if, and only if, $\pi$ avoids the patterns $321$ and $3412$, or equivalently if the graph $G_{\pi}$ is acyclic. In particular, in that case, we have $\left\vert \FibMVP{n}{\pi} \right\vert = \prod\limits_{i \in [n]} \big( 1 + \vert \LInv{\pi}{i} \vert \big)$.
\end{theorem}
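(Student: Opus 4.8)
The plan is to prove the two implications of the equivalence separately, using throughout the following observation about the MVP dynamics, which I would isolate as a lemma: \emph{for any parking function $q$ and any spot $k$ that is preferred by at least one car, the final occupant of $k$ is the largest-labelled car $c$ with $q_c=k$.} Indeed, when that car $c$ arrives it parks at $k$ (bumping the current occupant), and after its arrival no car preferring $k$ remains to be processed, while bumped cars only ever settle in free spots; hence $c$ is never displaced. Two consequences are immediate for $p:=\SubtoPF(S)$, writing $\sigma:=\OMVP{n}{p}$. First, every spot $k$ carrying no in-arc of $S$ is filled correctly, since then $\pi_k$ prefers $k$ and, as $(k,i)\in S\subseteq\Inv{\pi}$ forces $\pi_i<\pi_k$, it is the largest preferrer of $k$; so $\sigma_k=\pi_k$. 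Second, if some vertex $k$ carries both an in-arc $(j,k)\in S$ and an out-arc $(k,i)\in S$, then $\pi_k$ prefers $j\ne k$ while the preferrers of $k$ are exactly the cars $\pi_{i'}$ with $(k,i')\in S$, all of label $<\pi_k$; the lemma then puts a car $\ne\pi_k$ in spot $k$, so $S$ is invalid.

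For the implication ``$G_\pi$ has a cycle $\Rightarrow$ some $1$-subgraph is invalid'' I would use the cited fact that a cycle means $\pi$ contains $321$ or $3412$. If $\pi$ contains $321$ at positions $a<b<c$, then $(a,b),(b,c)\in\Inv{\pi}$ and $S:=\{(a,b),(b,c)\}\in\Sub{\pi}$ has a vertex ($b$) with both an in- and an out-arc, so it is invalid by the second consequence above. If $\pi$ contains $3412$ at positions $a<b<c<d$ (so $\pi_c<\pi_d<\pi_a<\pi_b$), then $(a,d),(b,c)\in\Inv{\pi}$ are \emph{nested} and I take $S:=\{(a,d),(b,c)\}$. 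Here spots $a,b$ are preferred by $\{\pi_a,\pi_d\}$, $\{\pi_b,\pi_c\}$ respectively (everyone else prefers their own spot), so by the lemma $\pi_a,\pi_b$ stay at $a,b$ and $\pi_c,\pi_d$ are the bumped cars, while $c,d$ are the only spots preferred by no car. Since $\pi_a<\pi_b$, the car $\pi_a$ closing spot $a$ arrives before the car $\pi_b$ closing spot $b$, so $\pi_d$ is bumped out of $a$ while spot $c$ is still empty; it therefore lands at a spot $\le c<d$, and a direct check (as in the base configuration $\OMVP{4}{(2,1,1,2)}=3421\ne 3412$) shows $\sigma\ne\pi$. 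The verification that the resulting $\sigma$ really differs from $\pi$ is the delicate point here, because bumped cars may be displaced several times.

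For the converse, assume $G_\pi$ is acyclic, i.e. $\pi$ avoids $321$ and $3412$, and fix any $S\in\Sub{\pi}$. Avoiding $321$ means no vertex of $G_\pi$ carries both an in- and an out-arc, so the vertices split into \emph{sources} (only out-arcs), \emph{sinks} (only in-arcs), and isolated vertices; since sinks have in-degree $\le 1$ in $S$, every component of $S$ containing an edge is a star centred at a source. By the first consequence above, all source and isolated spots already satisfy $\sigma_k=\pi_k$, so it remains to show that every bumped sink-car $\pi_t$, carrying an arc $(s,t)\in S$, ends at its spot $t$. I would prove this by induction on $|S|$ via an \emph{add-arc lemma}: if $S$ is valid and $(s,t)\in\Inv{\pi}\setminus S$ keeps $S\cup\{(s,t)\}$ a $1$-subgraph, then $S\cup\{(s,t)\}$ is valid. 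The parking functions $\SubtoPF(S)$ and $\SubtoPF(S\cup\{(s,t)\})$ differ only in the preference of car $\pi_t$ (from $t$ to $s$), and the point is that, because $\pi$ avoids $321$ and $3412$, the arcs of $G_\pi$ are pairwise non-nesting; this forces $\pi_t$, once it joins the queue at $s$, to be bumped out exactly to $t$ while leaving every other car's final spot unchanged. The base case $S=\varnothing$ gives $\sigma=\pi$ directly. Granting that all $1$-subgraphs are valid, Theorem~\ref{thm:MVPPF_subgraphs} and Corollary~\ref{cor:Harris_bound} yield $|\FibMVP{n}{\pi}|=|\Valid{\pi}|=|\Sub{\pi}|=\prod_{i\in[n]}\big(1+|\LInv{\pi}{i}|\big)$, the final assertion.

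The main obstacle is the sink-filling in the converse, equivalently the add-arc lemma: one must control how the bumped cars interleave \emph{across different stars}, since a car evicted from one source can travel through spots belonging to other stars and could a priori overshoot or undershoot its target sink. The non-nesting property of the arcs — the precise combinatorial content of avoiding $321$ and $3412$ — is exactly what rules this out, and turning that structural statement into a rigorous coupling between the two MVP processes, tracking that the single changed preference propagates to a single corrected landing, is the heart of the argument. The analogous displacement analysis is also what makes the $3412$ half of the forward direction require care.
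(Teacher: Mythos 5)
Your preliminary lemma (the final occupant of any preferred spot $k$ is the largest-labelled car preferring $k$, since bumped cars never displace anyone) is correct, and it gives a clean proof of the $321$ half of the forward direction, recovering Proposition~\ref{pro:upper_bound}. But the $3412$ half fails, and not just at a ``delicate point'': your chosen subgraph is simply not invalid in general. Take $\pi = 34512$, which avoids $321$ and contains $3412$ at positions $(a,b,c,d)=(1,2,4,5)$. Your nested subgraph is $S=\{(1,5),(2,4)\}$, with $\SubtoPF(S)=(2,1,1,2,3)$. Running the MVP process: cars $1,2$ park in spots $2,1$; car $3$ bumps car $2$, which re-parks in spot $3$; car $4$ bumps car $1$, which re-parks in spot $4$; car $5$ bumps car $2$, which re-parks in spot $5$. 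The outcome is $34512=\pi$, so $S$ is \emph{valid}: exactly the cascade you set aside ($\pi_d$ bumped to a spot $<c$, later re-bumped past the now-occupied spot $c$, landing at $d$) occurs and restores $\pi$. (For this $\pi$ the invalid subgraph is instead the \emph{crossing} pair $\{(1,4),(2,5)\}$, giving $p=(1,2,1,2,3)$ with outcome $34521\neq\pi$ --- the opposite of the situation for $\pi=3412$ itself, where the nested pair is invalid and the crossing pair is valid.) Whether the nested or the crossing pair on a $3412$ occurrence is the invalid one depends on the parity of the number of cars arriving in between; this is precisely the phenomenon isolated in Lemmas~\ref{lem:twoedges_noncrossing} and~\ref{lem:twoedges_crossing} and in Remark~\ref{rem:non_crossing_not_general}. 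So no uniform choice of forbidden motif can complete your argument; any repair must make a parity-dependent choice of the offending subgraph.

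The converse direction also remains a sketch: your ``add-arc lemma'' is exactly the hard content (as you acknowledge), and without the coupling argument the induction has no engine --- note also that its conclusion is only trivially true \emph{because} the theorem holds, so it cannot be cited as structurally obvious. For comparison: the paper does not prove this theorem at all; it is imported from Harris \emph{et al.}~\cite[Theorem~3.2]{HarrisMVP} and merely restated in the subgraph language, so a self-contained proof would be a genuine addition. As written, however, your proposal is not one: the forward direction is refuted by the example above, and the backward direction's key lemma is unproven.
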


\begin{example}\label{ex:valid}
Consider the permutation $\pi = 312$. The four $1$-subgraphs of $G_{\pi}$ are shown in Figure~\ref{fig:all_subgraphs}. Since $G_{\pi}$ is acyclic, all four of these are valid. Therefore to calculate the fibre set $\FibMVP{3}{312}$ we simply apply the map $\SubtoPF$ to each subgraph in turn. Finally, we get $\FibMVP{3}{312} = \{ (2,3,1), (1,3,1), (2,1,1), (1,1,1) \}$.

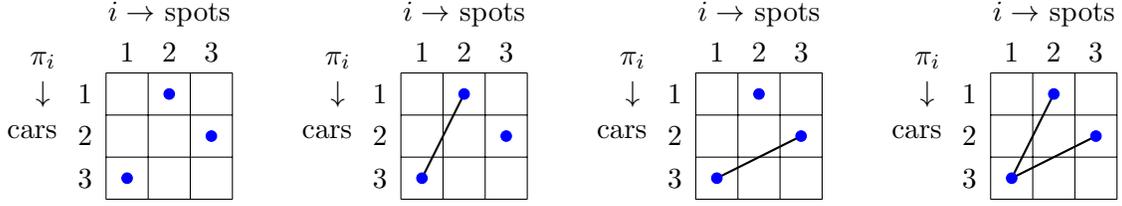
\begin{figure}[ht]

\centering

\begin{tikzpicture}[scale=0.28]
\draw [step=2] (2,2) grid (8,-4);
\node at (-1, 2.6) {$\pi_i$};
\node at (-1, 1) {$\downarrow$};
\node at (-1.5, -0.8) {\text{cars}};
\node at (5, 4.8) {$i \rightarrow$ spots};
\foreach \x in {1,...,3}
  \node at (1+2*\x, 3) {$\x$};
\foreach \y in {1,...,3}
  \node at (1, 3-2*\y) {$\y$};
\tdot{3}{-3}{blue}
\tdot{5}{1}{blue}
\tdot{7}{-1}{blue}

\begin{scope}[shift={(14,0)}]
\draw [step=2] (2,2) grid (8,-4);
\node at (-1, 2.6) {$\pi_i$};
\node at (-1, 1) {$\downarrow$};
\node at (-1.5, -0.8) {\text{cars}};
\node at (5, 4.8) {$i \rightarrow$ spots};
\foreach \x in {1,...,3}
  \node at (1+2*\x, 3) {$\x$};
\foreach \y in {1,...,3}
  \node at (1, 3-2*\y) {$\y$};
\draw [thick] (3,-3)--(5,1);
\tdot{3}{-3}{blue}
\tdot{5}{1}{blue}
\tdot{7}{-1}{blue}
\end{scope}

\begin{scope}[shift={(28,0)}]
\draw [step=2] (2,2) grid (8,-4);
\node at (-1, 2.6) {$\pi_i$};
\node at (-1, 1) {$\downarrow$};
\node at (-1.5, -0.8) {\text{cars}};
\node at (5, 4.8) {$i \rightarrow$ spots};
\foreach \x in {1,...,3}
  \node at (1+2*\x, 3) {$\x$};
\foreach \y in {1,...,3}
  \node at (1, 3-2*\y) {$\y$};
\draw [thick] (3,-3)--(7,-1);
\tdot{3}{-3}{blue}
\tdot{5}{1}{blue}
\tdot{7}{-1}{blue}
\end{scope}

\begin{scope}[shift={(42,0)}]
\draw [step=2] (2,2) grid (8,-4);
\node at (-1, 2.6) {$\pi_i$};
\node at (-1, 1) {$\downarrow$};
\node at (-1.5, -0.8) {\text{cars}};
\node at (5, 4.8) {$i \rightarrow$ spots};
\foreach \x in {1,...,3}
  \node at (1+2*\x, 3) {$\x$};
\foreach \y in {1,...,3}
  \node at (1, 3-2*\y) {$\y$};
\draw [thick] (3,-3)--(5,1);
\draw [thick] (3,-3)--(7,-1);
\tdot{3}{-3}{blue}
\tdot{5}{1}{blue}
\tdot{7}{-1}{blue}
\end{scope}
\end{tikzpicture}

\caption{The four $1$-subgraphs of the inversion graph $G_{312}$, which are all valid.\label{fig:all_subgraphs}}

\end{figure}

\end{example}

\begin{example}\label{ex:invalid}
Consider the permutation $\pi = 321$, and the $1$-subgraph $S$ with edges $(1,2)$ and $(1,3)$ (see Figure~\ref{fig:invalid_example} below). The corresponding parking function is $p = \SubtoPF(S) = (1,1,1)$. But we saw in Example~\ref{ex:valid} above that $\OMVP{3}{p} = 312 \neq 321$. As such, the $1$-subgraph $S$ is invalid.

\begin{figure}[ht]
\centering
\begin{tikzpicture}[scale=0.28]
\draw [step=2] (2,2) grid (8,-4);
\node at (-1, 2.6) {$\pi_i$};
\node at (-1, 1) {$\downarrow$};
\node at (-1.5, -0.8) {\text{cars}};
\node at (4.8, 4.8) {$i \rightarrow$ spots};
\foreach \x in {1,...,3}
  \node at (1+2*\x, 3) {$\x$};
\foreach \y in {1,...,3}
  \node at (1, 3-2*\y) {$\y$};
\draw [thick, out=15, in=-105] (3,-3) to (7,1);
\draw [thick] (3,-3)--(5,-1);
\tdot{3}{-3}{blue}
\tdot{5}{-1}{blue}
\tdot{7}{1}{blue}
\end{tikzpicture}
\caption{An example of an invalid $1$-subgraph of $G_{321}$.\label{fig:invalid_example}}
\end{figure}
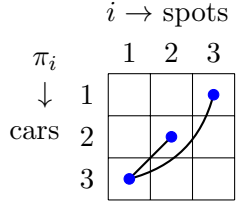

\end{example}

Another useful feature of the subgraph representation introduced in this section is that it allows certain statistics of parking functions to be easily read from the corresponding subgraph. Given a parking function $p$, the \emph{displacement} of car $i$ in $p$ as the number of spots car $i$ ends up from its original preference, i.e.\ $\vert p_i - \pi^{-1}_i \vert$. The displacement of $p$, denoted $\mathrm{disp}_{\mathrm{MVP}}(p)$, is simply the sum of the displacements of all cars in $p$. We have the following.

\begin{proposition}\label{pro:disp}
Let $p \in \MVP{n}$ be a parking function, and $S := \PFtoSub(p)$ the corresponding $1$-subgraph. Then we have $\mathrm{disp}_{\mathrm{MVP}}(p)= \sum\limits_{(j,i) \in S} (i-j)$.
\end{proposition}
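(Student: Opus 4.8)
The plan is to rewrite the total displacement as a sum indexed by spots rather than by cars, and then to match each nonzero term directly to an edge of $S$.

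First I would reindex. By definition $\mathrm{disp}_{\mathrm{MVP}}(p) = \sum_{c \in [n]} \vert p_c - \pi^{-1}_c \vert$, where the sum runs over car labels $c$. Substituting $c = \pi_i$ (so that $\pi^{-1}_c = i$ ranges over all spots $i \in [n]$ as $c$ ranges over all cars) turns this into $\sum_{i \in [n]} \vert p_{\pi_i} - i \vert$, where $p_{\pi_i}$ is the preferred spot of the car $\pi_i$ that ends up in spot $i$.

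Next I would resolve the absolute value using the subgraph description. Since $S = \PFtoSub(p)$, Theorem~\ref{thm:MVPPF_subgraphs} gives $\SubtoPF(S) = p$, so $p_{\pi_i}$ is determined by Definition~\ref{def:SubtoPF}: if no left-arc is incident to $i$ in $S$ then $p_{\pi_i} = i$, and otherwise $p_{\pi_i} = j$ for the unique edge $(j,i) \in S$. Because every edge of $S \subseteq \Inv{\pi}$ is an inversion, in the latter case $j < i$. Hence in both cases $p_{\pi_i} \leq i$, so $\vert p_{\pi_i} - i \vert = i - p_{\pi_i}$, and this quantity equals $0$ when $i$ has no incident left-arc and equals $i - j$ when $(j,i)$ is the unique incident left-arc. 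Summing then discards the zero terms, and the remaining spots are in bijection with the edges of $S$ (each spot $i$ corresponding to its unique incident left-arc $(j,i)$), yielding $\mathrm{disp}_{\mathrm{MVP}}(p) = \sum_{i \in [n]}(i - p_{\pi_i}) = \sum_{(j,i) \in S}(i-j)$, as claimed.

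I expect the only genuinely subtle point to be the disappearance of the absolute value: a priori a car's final spot could lie on either side of its preference, but the construction of $S$ forces every incident left-arc to be an inversion $(j,i)$ with $j < i$, which encodes the fact that in the MVP process a bumped car only ever moves forward. Once this monotonicity is read off from the subgraph via $\SubtoPF(\PFtoSub(p)) = p$, the remainder is a routine reindexing.
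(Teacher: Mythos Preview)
Your proof is correct and follows essentially the same approach as the paper's own proof, which simply observes that an edge $(j,i)$ in $S$ records that the car $\pi_i$ ending up in spot $i$ had preferred spot $j < i$, contributing displacement $i-j$. Your version is more detailed, in particular making explicit the reindexing by spots and the disappearance of the absolute value via $\SubtoPF(\PFtoSub(p)) = p$, but the underlying idea is the same.
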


\begin{proof}
Given an MVP parking function $p$, let $S := \PFtoSub(p)$ be the corresponding $1$-subgraph. By construction, we have an edge $(j,i)$ (with $j < i$) if the car $\pi_i$ that ends up in spot $i$ had initially preferred spot $j$. This means exactly that the displacement of car $\pi_i$ is given by $(i-j)$. The result follows immediately from this observation.
\end{proof}

\subsection{Improved bounds on the fibre sizes}\label{subsec:bounds}

In this part, we improve the upper bound from Corollary~\ref{cor:Harris_bound}, and also give a lower bound for the fibre size. We call a $1$-subgraph $S$ \emph{$\overrightarrow{P_2}$-free} if there is no triple $i < j < k$ such that $(i,j)$ and $(j,k)$ are both edges in $S$.

\begin{proposition}\label{pro:upper_bound}
Let $\pi \in S_n$ be a permutation, and $S \in \Sub{\pi}$ a $1$-subgraph of $G_{\pi}$. If $S$ is valid, then $S$ is $\overrightarrow{P_2}$-free. In particular, we have $\left\vert \FibMVP{n}{\pi} \right\vert \leq \vert \{ S \in \Sub{\pi}; \, S \text{ is } \overrightarrow{P_2}\text{-free} \} \vert$. 
\end{proposition}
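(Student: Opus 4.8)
The plan is to prove the contrapositive: if $S$ is \emph{not} $\overrightarrow{P_2}$-free, then $S$ is invalid. So suppose there is a triple $i < j < k$ with both $(i,j)$ and $(j,k)$ edges of $S$. Writing $p := \SubtoPF(S)$, the definition of $\SubtoPF$ in Definition~\ref{def:SubtoPF} tells us that $p_{\pi_j} = i$ and $p_{\pi_k} = j$. In words: the car $\pi_j$ prefers spot $i$, and the car $\pi_k$ prefers spot $j$. My goal is to show that running the MVP parking process on $p$ cannot produce the outcome $\pi$ — specifically, that the car $\pi_j$ does not actually end up in spot $j$, contradicting $\OMVP{n}{p} = \pi$.

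\textbf{Tracking the two relevant cars.}
First I would pin down the arrival order. Since $(i,j)$ and $(j,k)$ are inversions of $\pi$, we have $\pi_i > \pi_j$ and $\pi_j > \pi_k$; in particular $\pi_k < \pi_j$, so car $\pi_k$ enters \emph{before} car $\pi_j$. Now I trace what must happen to spot $j$. Car $\pi_k$ prefers spot $j$ and, being earlier, will occupy spot $j$ at the moment it enters (or bump whoever is there into spot $j$ — either way it sits in $j$). Later, car $\pi_j$ enters, preferring spot $i < j$. For $\pi_j$ to reach spot $j$ in the MVP process it would have to be bumped forward from $i$; but bumping moves a car from its preferred spot to the \emph{first available} spot at or beyond it. The key point is that when car $\pi_j$ is bumped out of spot $i$, spot $j$ is already occupied (by $\pi_k$, who arrived earlier and was never subsequently bumped, since bumpings do not propagate). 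Hence $\pi_j$ cannot settle into spot $j$: it either stays in $i$ if never bumped, or is pushed to some available spot, which cannot be $j$.

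\textbf{Making the ``spot $j$ is blocked'' claim precise.}
The delicate part, and the main obstacle, is justifying rigorously that spot $j$ is occupied throughout the relevant window and that the occupant is not itself displaced in a way that frees $j$ for $\pi_j$. The cleanest route is to argue directly about occupancy rather than about which car sits where: in the MVP process, once a spot becomes occupied it stays occupied (a bumped car vacates its spot, but the bumping car immediately takes it, and the bumped car then fills another previously-empty spot). Since car $\pi_k$ enters before $\pi_j$ and its preference is exactly $j$, spot $j$ is occupied from the moment $\pi_k$ enters onward. Therefore when $\pi_j$ (preferring $i < j$) is processed, any forward drive from $i$ skips over the occupied spot $j$. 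Combined with the observation that $\pi_j$ cannot be bumped \emph{into} $j$ either (bumping only places a car in the first empty spot, and $j$ is not empty), we conclude $\pi_j$ never occupies spot $j$, so $\OMVP{n}{p} \neq \pi$ and $S$ is invalid.

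\textbf{Concluding the bound.}
Having shown every valid $S$ is $\overrightarrow{P_2}$-free, the set inclusion $\Valid{\pi} \subseteq \{ S \in \Sub{\pi}; \, S \text{ is } \overrightarrow{P_2}\text{-free} \}$ is immediate, and taking cardinalities together with the bijection of Theorem~\ref{thm:MVPPF_subgraphs} (which gives $|\FibMVP{n}{\pi}| = |\Valid{\pi}|$) yields the stated inequality.
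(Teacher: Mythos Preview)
Your proof is correct and follows essentially the same contrapositive route as the paper: both show that car $\pi_j$, which prefers spot $i$, can never land in spot $j$ because spot $j$ is already occupied from the moment car $\pi_k$ enters (and, as you make explicit, occupied spots stay occupied in the MVP process). Your monotonicity-of-occupancy argument is in fact slightly cleaner than the paper's, which instead makes a WLOG assumption that $i$ has no left-arc so that $p_{\pi_i}=i$ forces $\pi_j$ to be bumped; note only that the parenthetical in your second paragraph (``$\pi_k$ \ldots\ was never subsequently bumped, since bumpings do not propagate'') is a misstep --- $\pi_k$ can certainly be bumped directly by a later car preferring $j$ --- but your third paragraph correctly supersedes this with the occupancy argument.
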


\begin{proof}
We proceed by contraposition. Suppose that $S \in \Sub{\pi}$ is not $\overrightarrow{P_2}$-free. Define $p = \SubtoPF(S)$ to be the corresponding parking function, and $\pi' : = \OMVP{n}{p}$ to be its MVP outcome. We wish to show that $\pi' \neq \pi$. By definition, there is a triple $i<j<k$ such that $(i,j)$ and $(j,k)$ are both edges in the $S$ (see Figure~\ref{fig:example_triple}). Since edges are inversions in $\pi$, this implies that $\pi_i>\pi_j>\pi_k$, i.e.\ that car $\pi_k$ arrives earlier than car $\pi_j$, and car $\pi_j$ arrives earlier than car $\pi_i$. Without loss of generality, assume that $i$ is the left-most vertex of such a ``chain'', i.e.\ that the vertex $i$ has no incident left edge in $S$. This implies by Definition~\ref{def:SubtoPF} that $p_{\pi_i}=i$. Also by this definition, we get $p_{\pi_j}=i$ (because $(i,j)$ is an edge of $S$), and $p_{\pi_k} = j$. 

In particular we have $p_{\pi_i}=p_{\pi_j}$. Since $\pi_i>\pi_j$, this implies that car $\pi_j$ will be bumped out of its spot at the latest by car $\pi_i$ (it may be bumped first by another car also preferring the same spot). However,  when this bumping occurs, spot $j$ was necessarily already occupied, either by car $\pi_k$ (since $p_{\pi_k}=j$), or by a car which arrived later and bumped car $\pi_k$. Hence, when car $\pi_j$ is bumped out of its preferred spot, it could not finally park in spot $j$. This implies that the car $\pi'_j$, which ends up occupying spot $j$ in the MVP parking process for $p$, cannot be $\pi_j$, i.e.\ $\pi'_j \neq \pi_j$, and thus $\pi' \neq \pi$ as desired.

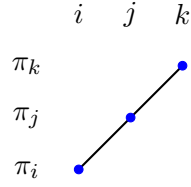
\begin{figure}[ht]
\centering
\begin{tikzpicture}[scale=0.23]
\node at (3, 3) {$i$};
\node at (6, 3) {$j$};
\node at (9, 3) {$k$};
\node at (0, 0) {$\pi_k$};
\node at (0, -3) {$\pi_j$};
\node at (0, -6) {$\pi_i$};
\draw [thick] (3,-6)--(6,-3);
\draw [thick] (6,-3)--(9,0);
\tdot{3}{-6}{blue}
\tdot{6}{-3}{blue}
\tdot{9}{0}{blue}
\end{tikzpicture}
\caption{A triple $i<j<k$ in $1$-subgraph $S$\label{fig:example_triple}}
\end{figure}

\end{proof}

We now give a lower bound on the fibre sizes. We say that a $1$-subgraph $S \in \Sub{\pi}$ is \emph{horizontally separated} (HS) if for any pair of edges $(j,i)$ and $(j',i')$ of $S$, we either have $i < j'$ or $i' < j$. In words, there is no pair of edges in $S$ which ``overlap horizontally'' in the graphical representation, end-points included. For example, in Figure~\ref{fig:example_HS}, Cases~(A) and (B) are HS with the condition $i<j'$ and $i'<j$ respectively, while Cases~(C) and (D) are not HS since $i'>j$ and $i>j'$ respectively.

\begin{figure}[ht]
  \centering
    \begin{tikzpicture}[scale=0.2]
    \draw [thick] (2,-11)--(5,-8);
    \draw [thick] (8,-5)--(11,-2);
    \draw [thick, dashed, green] (5,-2)--(5,-11);
    \draw [thick, dashed, green] (8,-2)--(8,-11);
    \tdot{2}{-11}{blue}
    \tdot{5}{-8}{blue}
    \tdot{8}{-5}{blue}
    \tdot{11}{-2}{blue}
    \node at (2,1) {$j$};
    \node at (5,1) {$i$};
    \node at (8,1) {$j'$};
    \node at (11,1) {$i'$};
    
    \node at (-1, -2) {$\pi_{i'}$};
	\node at (-1, -5) {$\pi_{j'}$};
	\node at (-1, -8) {$\pi_i$};
	\node at (-1, -11) {$\pi_j$};
    \node at (6.5,-13.5) {(A)};

	\begin{scope}[shift={(20,0)}]
	\draw [thick] (2,-8)--(5,-5);
    \draw [thick] (8,-11)--(11,-2);
    \draw [thick, dashed, green] (5,-2)--(5,-11);
    \draw [thick, dashed, green] (8,-2)--(8,-11);
    \tdot{2}{-8}{blue}
    \tdot{5}{-5}{blue}
    \tdot{8}{-11}{blue}
    \tdot{11}{-2}{blue}
    \node at (2,1) {$j'$};
    \node at (5,1) {$i'$};
    \node at (8,1) {$j$};
    \node at (11,1) {$i$};
    
    \node at (-1, -2) {$\pi_i$};
	\node at (-1, -5) {$\pi_{i'}$};
	\node at (-1, -8) {$\pi_{j'}$};
	\node at (-1, -11) {$\pi_j$};
    \node at (6.5,-13.5) {(B)};
	\end{scope}
	
	\begin{scope}[shift={(40,0)}]
	\draw [thick] (2,-8)--(11,-5);
    \draw [thick] (5,-11)--(8,-2);
    \draw [thick, dashed, green] (5,-2)--(5,-11);
    \draw [thick, dashed, green] (11,-2)--(11,-11);
    \tdot{2}{-8}{blue}
    \tdot{5}{-11}{blue}
    \tdot{8}{-2}{blue}
    \tdot{11}{-5}{blue}
    \node at (2,1) {$j'$};
    \node at (5,1) {$j$};
    \node at (8,1) {$i$};
    \node at (11,1) {$i'$};
    
    \node at (-1, -2) {$\pi_{i}$};
	\node at (-1, -5) {$\pi_{i'}$};
	\node at (-1, -8) {$\pi_{j'}$};
	\node at (-1, -11) {$\pi_j$};
    \node at (6.5,-13.5) {(C)};
	\end{scope}

	\begin{scope}[shift={(60,0)}]
	\draw [thick] (2,-5)--(11,-2);
    \draw [thick] (5,-11)--(8,-8);
    \draw [thick, dashed, green] (5,-2)--(5,-11);
    \draw [thick, dashed, green] (11,-2)--(11,-11);
    \tdot{2}{-5}{blue}
    \tdot{5}{-11}{blue}
    \tdot{8}{-8}{blue}
    \tdot{11}{-2}{blue}
    \node at (2,1) {$j$};
    \node at (5,1) {$j'$};
    \node at (8,1) {$i'$};
    \node at (11,1) {$i$};
    
    \node at (-1, -2) {$\pi_i$};
	\node at (-1, -5) {$\pi_j$};
	\node at (-1, -8) {$\pi_{i'}$};
	\node at (-1, -11) {$\pi_{j'}$};
    \node at (6.5,-13.5) {(D)};
	\end{scope}
    
    \end{tikzpicture}
    \caption{Examples of HS and non-HS graphs: (A) and (B) are HS; (C) and (D) are not HS.\label{fig:example_HS}}
\end{figure}
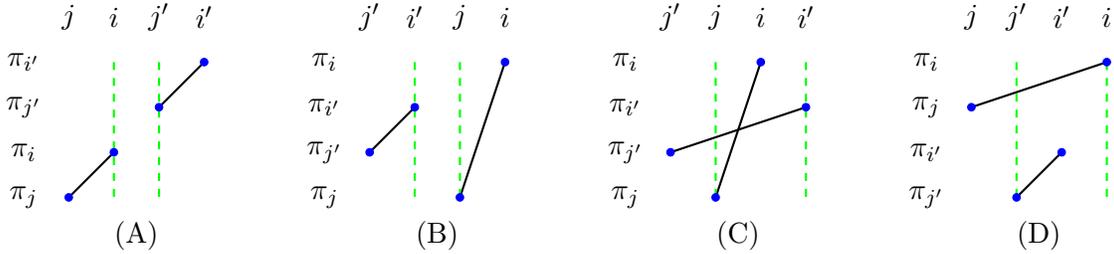

\begin{proposition}\label{pro:lower_bound}
Let $\pi \in S_n$ be a permutation, and $S \in \Sub{\pi}$ a $1$-subgraph of $G_{\pi}$. If $S$ is HS, then $S$ is valid. In particular, we have $\left\vert \FibMVP{n}{\pi} \right\vert \geq \vert \{ S \in \Sub{\pi}; \, S \text{ is HS} \} \vert$. 
\end{proposition}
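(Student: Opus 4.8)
The plan is to prove the first, implicational statement: if $S \in \Sub{\pi}$ is HS then it is valid, i.e.\ $\OMVP{n}{p} = \pi$ where $p := \SubtoPF(S)$. The lower bound then follows immediately, since by Theorem~\ref{thm:MVPPF_subgraphs} the map $\PFtoSub$ is a bijection from $\FibMVP{n}{\pi}$ onto $\Valid{\pi}$, and the implication shows $\Valid{\pi}$ contains every HS $1$-subgraph. First I would unpack the HS condition structurally. For two edges $(j,i),(j',i') \in S$, the requirement ``$i<j'$ or $i'<j$'' says exactly that the closed intervals $[j,i]$ and $[j',i']$ are disjoint; hence the edge-spans partition a subset of $[n]$ into pairwise disjoint ``blocks'', and I would check that no spot can be an endpoint or interior point of two distinct blocks. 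Reading off $p$ from Definition~\ref{def:SubtoPF}, this yields a clean description: for each edge $(j,i)$ one has $p_{\pi_j}=j$ (spot $j$ carries no incident left-edge), $p_{\pi_i}=j$, and $p_{\pi_k}=k$ for every interior $k$ with $j<k<i$; every remaining spot $s$ outside all blocks satisfies $p_{\pi_s}=s$. In particular, every spot other than a right endpoint $\{i : (j,i) \in S\}$ is the preferred spot of a \emph{unique} car; spot $j$ is preferred by exactly the two cars $\pi_j$ and $\pi_i$, with $\pi_j>\pi_i$ since $(j,i)\in\Inv{\pi}$; and each right endpoint $i$ is preferred by no car.

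Next I would dispatch the ``easy'' placements. If a spot $s$ is the preferred spot of a unique car $c$, then once $c$ parks in $s$ (directly or after being bumped there), it can never be displaced, because a car is only ever bumped out of a spot by a later car preferring that same spot, of which there is none. This gives $\pi_k \mapsto k$ for every interior $k$ and every singleton spot. For spot $j$, the two preferrers are $\pi_j$ and $\pi_i$ with $\pi_j$ (the larger label) arriving later; when $\pi_j$ enters it seizes spot $j$, and no later car prefers $j$, so $\pi_j \mapsto j$. At this point every spot except the right endpoints is pinned to its target car $\pi_s$, and the only thing left to show is that, for each edge $(j,i)$, the car $\pi_i$ comes to rest in spot $i$.

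The crux, and the step I expect to be the main obstacle, is showing that car $\pi_i$, which prefers $j$ and is repeatedly bumped rightward, never escapes its block $[j,i]$ to the right. Here I would invoke the observation from the introduction that the \emph{set} of occupied spots after any number of cars have entered is identical in the classical and MVP processes, as it depends only on the multiset of preferences seen so far. In the classical process, a left-to-right induction on the blocks shows each block is filled exactly by its own cars: the preferences confined to a block $[j,i]$ form the multiset $\{j,j,j+1,\dots,i-1\}$, which is a parking function on $[j,i]$, so those cars all park within $[j,i]$, while cars preferring spots further right never drive left into the block. Transferring back to MVP, at each step the single newly-occupied spot coincides in both processes; so when $\pi_i$ (or any block car) is bumped rightward, the spot it lands in equals the spot where the bumping car would park classically, which lies inside the block. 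Hence no block car ever leaves its block, and by a counting argument no foreign car enters it either.

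Finally, combining this isolation with the easy placements finishes the argument: within each block $[j,i]$, spots $j,\dots,i-1$ are occupied by $\pi_j,\dots,\pi_{i-1}$, and the block car $\pi_i$ remains inside $[j,i]$, so it must occupy the unique remaining spot $i$. Thus $\pi_s \mapsto s$ for all $s\in[n]$, i.e.\ $\OMVP{n}{p}=\pi$ and $S$ is valid. Together with the bijection $\FibMVP{n}{\pi}\cong\Valid{\pi}$ of Theorem~\ref{thm:MVPPF_subgraphs}, this yields $\left\vert \FibMVP{n}{\pi}\right\vert = \vert \Valid{\pi}\vert \geq \vert\{ S \in \Sub{\pi};\, S \text{ is HS}\}\vert$, as claimed.
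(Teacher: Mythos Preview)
Your proof is correct and takes a genuinely different route from the paper's. The paper proceeds by induction on the number $k$ of edges in $S$: it deletes the right-most edge $(j,i)$, applies the induction hypothesis to the resulting HS subgraph $S'$ (so $\OMVP{n}{p'}=\pi$), and then compares the processes for $p$ and $p'$ directly, tracking by hand that the only difference is the trajectory of car $\pi_i$, which must land in spot $i$ because the intermediate spots $j+1,\dots,i-1$ are all isolated and hence singly-preferred. Your argument instead exploits the global block decomposition of an HS subgraph together with the observation from Section~\ref{subsec:PF_intro} that the classical and MVP processes share the same occupied-spot set at every time step; this lets you reduce the question to a classical parking-function computation on each block and conclude by block isolation. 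Your approach makes the structural role of the HS hypothesis (pairwise disjoint edge-intervals) more transparent and arguably gives a cleaner reason why the result holds, whereas the paper's induction is more self-contained and avoids appealing to the classical/MVP equivalence. One small point worth tightening: the claims ``no block car ever leaves its block'' and ``no foreign car enters it'' are not independent --- knowing the bumped car is itself a block car (hence lands in the right block) relies on no foreign car having previously entered --- so the cleanest formulation is a single invariant, proved by induction on arrival time, that after each step the cars occupying block $B$ are exactly the block-$B$ cars that have arrived so far.
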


\begin{proof}
Let $S \in \Sub{\pi}$ be HS, $p := \SubtoPF(S)$ the corresponding parking function, and $\pi' = \OMVP{n}{p}$ its outcome. We wish to show that $\pi' = \pi$. We proceed by induction on the number $k \geq 0$ of edges of $S$. For $k = 0$, $S$ is the empty subgraph with no edges. By construction, this means that $p_{\pi_i} = i$ for all $i \in [n]$. In particular, all cars have distinct preferences. As such, there are no bumpings/collisions, and every car ends up in its preferred spot in the outcome $\pi'$, i.e.\ $\pi'_i = \pi_i$ for all $i \in [n]$, as desired.

Suppose now that the result is proved for any HS subgraph $S'$ with at most $k-1$ edges, and that $S$ is a HS subgraph with $k$ edges. Let $(j,i)$, with $j < i$ and $\pi_j > \pi_i$, denote the right-most edge in $S$, i.e.\ (given the HS property) all other edges $(j', i')$ with $j' < i'$ must satisfy $i' < j$. Let $S'$ denote the subgraph $S$ with the edge $(j,i)$ removed. By construction, $S'$ is HS. We define $p' := \SubtoPF(S')$ to be the corresponding parking function, whose outcome is $\pi$ by the induction hypothesis ($S'$ is valid).

By construction, we have $p_{\pi_i} = j$, $p'_{\pi_i} = i$, and $p_k = p'_k$ for all other values of $k$. Moreover, by the HS assumption, cars $\pi_i$ and $\pi_j$ are the only cars to prefer spot $j$ in $p$. Now consider the MVP parking process for $p$. Up to the arrival of car $\pi_j$, the only difference compared with the process for $p'$ lies in the fact that car $\pi_i$ occupies spot $j$ instead of spot $i$. Otherwise the process is identical, and in particular spot $i$ is unoccupied at that point. When car $\pi_j$ arrives, it parks in spot $j$, and bumps $\pi_i$. Similarly, after the arrival of $\pi_j$, the processes in $p$ and $p'$ are the same, except perhaps for what happens to car $\pi_i$. It therefore suffices to show that $\pi_i$ will finally end up in spot $i$.

By the preceding remark, spot $i$ is available when car $\pi_j$ arrives, so car $\pi_i$ will first park in some available spot $i' \leq i$. But by the HS assumption of $S$, all the vertices between $j$ and $i$ are isolated. This means that $p_{\pi_{i'}} = i'$ for all $i' \in [j+1, i-1]$. In particular, all such spots $i'$ are either already occupied when $\pi_i$ is first bumped (if $\pi_{i'} < \pi_j$), or the arrival of car $\pi_{i'}$ will subsequently bump car $\pi_i$ out of the spot $i'$ (if $\pi_{i'} > \pi_j$). This implies that car $\pi_i$ cannot end up in such a spot $i'$, and therefore it can only end up in $i$, as desired.
\end{proof}

Note that any subgraph consisting of a single arc is horizontally separated, as is the empty subgraph. As such, Proposition~\ref{pro:lower_bound} implies in particular that  $\left\vert \FibMVP{n}{\pi} \right\vert \geq 1 + \left\vert \Inv{\pi} \right\vert$.
One might ask how tight the bounds of Propositions~\ref{pro:upper_bound} and \ref{pro:lower_bound} are. This is discussed in Section~\ref{sec:future} at the end of the paper.


\section{Motzkin parking functions}\label{sec:MotzPF}

\subsection{Motzkin parking functions and Motzkin paths}\label{subsec:Motz_general}

We consider lattice paths starting from $(0,0)$ with steps $U = (1, 1)$ (upwards step), $D = (1, -1)$ (downwards step), and $H = (1, 0)$ (horizontal step). A \emph{Motzkin path} is a lattice path with these steps ending at some point $(n,0)$ which never goes below the X-axis (see Figure~\ref{fig:PF_Motz}). We denote $\Motz{n}$ the set of Motzkin paths ending at $(n,0)$ (i.e.\ with $n$ steps). Motzkin paths are enumerated by the ubiquitous \emph{Motzkin numbers} and are in bijection with a number of different combinatorial objects (see e.g.~\cite{BarMotz} or~\cite{StanleyEC}).

Given a parking function $p \in \PF{n}$, we define a lattice path with $n$ steps $\Phi(p) := \phi_1 \cdots \phi_n$ by:
\beq\label{eq:PF_Motz}
\forall j \in [n], \quad
\phi_j = 
\begin{cases}
U \quad \text{if } \quad \vert \{j;p_i=j\} \vert  \geq 2,\\
H \quad \text{if } \quad \vert \{j;p_i=j\} \vert = 1,\\
D \quad \text{if } \quad \vert \{j;p_i=j\} \vert = 0.
\end{cases}
\eeq

\begin{definition}
Let $p=(p_1,\cdots,p_n) \in \PF{n}$. We say that $p$ is a \emph{Motzkin parking function} if $\forall j \in [n]$, $\vert \{j;p_i=j\} \vert \leq 2$. We denote $\MotzPF{n}$ the set of Motzkin paking functions of length $n$.
\end{definition}

In words, a Motzkin parking function is a parking function in which each spot is preferred by at most two cars. The terminology of Motzkin parking function comes from the following result.

\begin{theorem}\label{thm:MotzPF_characterisation}
\label{thm:prefConfig_Motzkin}
Let $p$ be a parking preference. Then $p \in \MotzPF{n}$ if and only if $\Phi(p)$ is a Motzkin path. 
\end{theorem}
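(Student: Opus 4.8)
The plan is to translate both sides of the claimed equivalence into statements about the occupation counts $m_j := \vert\{i \in [n];\, p_i = j\}\vert$, the number of cars preferring spot $j$, and then match them up. By the definition of $\Phi(p)$, the step $\phi_j$ is $U$, $H$, or $D$ according to whether $m_j \geq 2$, $m_j = 1$, or $m_j = 0$, so the height increment contributed by step $j$ is
\[
\epsilon_j := \begin{cases} +1 & \text{if } m_j \geq 2, \\ 0 & \text{if } m_j = 1, \\ -1 & \text{if } m_j = 0. \end{cases}
\]
First I would record the elementary but crucial inequality $\epsilon_j \leq m_j - 1$, valid for every $j$ and an equality precisely when $m_j \leq 2$. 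Summing over all $j$ and using $\sum_{j} m_j = n$ (each of the $n$ cars prefers exactly one spot), the terminal height of $\Phi(p)$ is
\[
h_n = \sum_{j=1}^n \epsilon_j \leq \sum_{j=1}^n (m_j - 1) = n - n = 0,
\]
with equality if and only if $m_j \leq 2$ for every $j$.

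This observation already resolves the ``ends at height $0$'' half of the Motzkin-path condition: $\Phi(p)$ terminates on the X-axis if and only if no spot is preferred by three or more cars. Under that hypothesis the increments simplify to the exact identity $\epsilon_j = m_j - 1$, so the partial heights become
\[
h_\ell = \sum_{j=1}^\ell \epsilon_j = \Big(\sum_{j=1}^\ell m_j\Big) - \ell.
\]
The requirement that $\Phi(p)$ never dip below the X-axis, namely $h_\ell \geq 0$ for all $\ell$, then reads $\sum_{j=1}^\ell m_j \geq \ell$ for all $\ell \in [n]$, which says that at least $\ell$ cars prefer one of the first $\ell$ spots. This is exactly the classical characterisation of parking functions (the non-decreasing rearrangement $(a_1, \ldots, a_n)$ satisfies $a_i \leq i$) recalled in Section~\ref{subsec:PF_intro} and in~\cite{Yan}.

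With these two translations in hand the equivalence follows by assembling the pieces in the right order. For the forward direction, if $p \in \MotzPF{n}$ then every $m_j \leq 2$, giving $h_n = 0$, while $p$ being a parking function gives $h_\ell \geq 0$ for all $\ell$; hence $\Phi(p)$ is a Motzkin path. For the converse, I would first use \emph{only} that $\Phi(p)$ ends at height $0$ to deduce, via the equality case of $\epsilon_j \leq m_j - 1$, that every spot is preferred by at most two cars; this is what licenses the exact identity $\epsilon_j = m_j - 1$. Having secured the $\leq 2$ condition, the ``never below'' property then yields $\sum_{j \leq \ell} m_j \geq \ell$ for all $\ell$, so $p$ is a parking function, and therefore $p \in \MotzPF{n}$.

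The only delicate point, and the place where one must respect the order of the argument, is the truncation built into the definition of $\phi_j$: a spot preferred by three or more cars still contributes only a single up-step, so the raw increment $\epsilon_j$ undercounts the true congestion $m_j - 1$. The resulting strict inequality is exactly why a path with some $m_j \geq 3$ can never return to the axis, and recognising that the terminal-height condition \emph{alone} detects the $\leq 2$ constraint is the key that unlocks the converse. Everything else reduces to the standard parking-function prefix count.
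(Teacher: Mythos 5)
Your proof is correct, and it takes a genuinely different route from the paper's. The paper proves both directions by running the MVP parking process: for the forward direction it shows that each doubly-preferred spot $k$ can be matched bijectively to a later spot $k'$ preferred by no cars (the spot where the bumped car eventually lands), and for the converse it builds an injection from ``excess'' cars into $D$-steps and closes a chain of inequalities. You instead never touch the parking process: you work with the occupation counts $m_j$, observe the pointwise inequality $\epsilon_j \leq m_j - 1$ with equality exactly when $m_j \leq 2$, deduce that the terminal height alone detects the ``at most two cars per spot'' condition, and then identify non-negativity of the partial heights with the classical prefix characterisation $\left\vert \{ i ;\, p_i \leq \ell \} \right\vert \geq \ell$ of parking functions. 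Your argument is more elementary and, notably, proves the theorem exactly as stated for an arbitrary parking \emph{preference}: the converse in the paper starts from ``suppose $p \in \MVP{n}$ is such that $\Phi(p)$ is a Motzkin path,'' i.e.\ it implicitly assumes $p$ is already a parking function, whereas your equality-case analysis derives both the $m_j \leq 2$ bound and the parking-function property from the path conditions, filling that small gap. What the paper's heavier approach buys is structural information your counting argument does not provide: the explicit pairing of each doubly-preferred spot with the empty spot receiving its bumped car is exactly the kind of correspondence exploited later in Section~3.2 for the bijection with non-crossing matchings, so the two proofs serve complementary purposes.
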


\begin{proof}
Let $p \in \MotzPF{n}$, meaning each spot will be preferred by at most $2$ cars in $p$. Denote $\phi := \Phi(p) = \phi_1 \cdots \phi_n$ the corresponding lattice path. We wish to show that $\phi$ is a Motzkin path. Fix some index $k \in [n]$ such that $\phi_k = U$. This means that spot $k$ is preferred by two cars, say $i$, $j$, with $i < j$ (i.e., $p_i=p_j=k$). Because one spot can only hold one car, this means that after bumping, car $j$ will park on spot $k$, and the other car $i$ will be bumped from the current spot $k$ to the first available spot. Denote $k'$ the spot where it eventually ends up once all cars have parked. 

We must have $k' > k$, and we claim that spot $k'$ is preferred by no cars. Seeking contradiction, suppose that $k'$ is preferred by a car $i'$, and let $j'$ denote the last car to arrive in the parking sequence before car $i$ parks in $k'$. On the one hand, if $i' < j'$, then it is impossible for $i$ to park in $k'$, since it would already be occupied at that point. On the other hand, if $i' > j'$, then car $i'$ would bump $i$ out of spot $k'$, so car $i$ cannot end up in spot $k'$. As such, $k'$ is preferred by no cars, i.e.\ $\phi_{k'} = D$.

We have therefore defined a map $k \mapsto k'$ which maps $U$ steps in $\Phi$ to $D$ steps, such that $k < k'$ for any $U$ step $k$. We claim that this map is bijective by exhibiting its inverse. If $k' \in [n]$ is such that $\phi_{k'} = 0$, we simply set $k$ to be the spot which was initially preferred by that car which ends up in spot $k'$. By construction, that car did not initially prefer $k'$ (no car did, since $\phi_{k'} = 0$), and therefore its initial preference must be to the left of $k'$, i.e. $k < k'$. Moreover, since that car ends up in a spot that is not its first preference, it must have been bumped out, meaning that its initial preference $k$ must be preferred by at least two cars, i.e. $\phi_k = 2$. This shows that the map $k \mapsto k'$ is indeed bijective, which implies that $\phi$ is a Motzkin path, as desired.

Conversely, suppose that $p \in \MVP{n}$ is such that $\phi = \Phi(p)$ is a Motzkin path. We wish to show that $p$ is a Motzkin parking function. Let $\mathcal{U} := \{k \in [n]; \, \phi_k = U\}$, resp.\ $\mathcal{D} := \{k \in [n]; \, \phi_k = D\}$, denote the set of $U$, resp.\ $D$, steps in $\Phi$. For any $k \in \mathcal{U}$, consider the set $C_k$ of cars that prefer spot $k$. We wish to show that $\vert C_k \vert = 2$ for all $k$. Let $i_k$ denote the car that ends up in spot $k$ after all cars have parked. By construction $i_k \in C_k$. As above, to each element $j \neq i_k$ in $C_k$, we can associate injectively a parking spot $k_j > k$ where car $j$ ends up, and this spot $k_j$ must be preferred by no cars, i.e.\ $k_j \in \mathcal{D}$. This therefore describes an injection $\bigcup\limits_{k \in \mathcal{U}} \left( C_k \setminus \{i_k\} \right) \hookrightarrow \mathcal{D}$. Finally, we get:
$$ \vert \mathcal{D} \vert = \vert \mathcal{U} \vert = \sum\limits_{k \in \mathcal{U}} 1 \leq \sum\limits_{k \in \mathcal{U}} \left( \vert C_k \vert - 1 \right) \leq \vert \mathcal{D} \vert,$$
where the equality $\vert \mathcal{D} \vert = \vert \mathcal{U} \vert$ stems from the fact that $\phi$ is a Motzkin path, the left inequality follows from the fact that $\vert C_k \vert \geq 2$ by construction of the map $\Phi$ (since $\phi_k = U$), and the right inequality follows from the injective map described above. For the left-most and right-most terms to be equal, we must therefore have that all terms in the sums are equal, i.e.\ that $\vert C_k \vert = 2$ for all $k \in \mathcal{U}$, as desired.
\end{proof}

\begin{example}\label{ex:PF_Motz}
Consider the Motzkin parking function $p=(2,2,1,4,3,6,4,6) \in \MotzPF{8}$. The corresponding Motzkin path is $\Phi(p)=HUHUDUDD$. Indeed, spots $1$ and $3$ are preferred by one car, spots $2$, $4$ and $6$ by two cars, and spots $5$, $7$ and $8$ by no cars. We can check that $\Phi(p)$, illustrated on Figure~\ref{fig:PF_Motz}, is indeed a Motzkin path.

\begin{figure}[ht]
\centering
\begin{tikzpicture}[scale=0.4]
\draw [step=2] (2,2) grid (18,-2);
\draw [line width=2pt,red] (2,-2)--(4,-2);
\draw [line width=2pt,red] (4,-2)--(6,0);
\draw [line width=2pt,red] (6,0)--(8,0);
\draw [line width=2pt,red] (8,0)--(10,2);
\draw [line width=2pt,red] (10,2)--(12,0);
\draw [line width=2pt,red] (12,0)--(14,2);
\draw [line width=2pt,red] (14,2)--(16,0);
\draw [line width=2pt,red] (16,0)--(18,-2);

\tdot{2}{-2}{red}
\tdot{18}{-2}{red}
\end{tikzpicture}
\caption{The Motzkin path $\Phi(p)=HUHUDUDD$ corresponding to the Motzkin parking function $p=(2,2,1,4,3,6,4,6)$.\label{fig:PF_Motz}}
\end{figure}
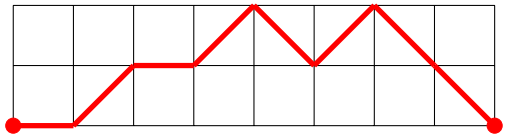

\end{example}

The definition of the map $\Phi$ in Equation~\eqref{eq:PF_Motz} only depends on the number of cars which prefer each spot, and not on the labels of the cars in question. It is therefore natural to define an equivalence relationship $\sim$ on $\MotzPF{n}$ by $p \sim p'$ if $p'$ is obtained by permuting the preferences in $p$. For example, the parking functions $(2, 1, 1, 4)$ and $(1, 4, 1, 2)$ are equivalent. 
We write $\faktor{\MotzPF{n}}{\sim}$ for the set of equivalence classes of Motzkin parking functions. The above observation implies that $\Phi$ is constant on the equivalence classes of $\sim$, so that with slight abuse of notation, we can consider $\Phi$ to be defined on the set $\faktor{\MotzPF{n}}{\sim}$. We then have the following.

\begin{theorem}\label{thm:bij_MotzPFequiv_Motz}
The map $\Phi: \faktor{\MotzPF{n}}{\sim}  \rightarrow \Motz{n}$ is a bijection.
\end{theorem}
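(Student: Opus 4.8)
The plan is to establish that $\Phi$ is well-defined on equivalence classes (already observed in the text preceding the statement), and then to prove separately that it is injective and surjective. The key observation underpinning everything is that $\Phi(p)$ depends only on the \emph{count vector} $c_p \colon [n] \to \mathbb{Z}_{\geq 0}$ defined by $c_p(j) := \vert \{ i ; \, p_i = j \} \vert$, via the correspondence $c_p(j) = 0 \leftrightarrow \phi_j = D$, $\ c_p(j) = 1 \leftrightarrow \phi_j = H$, and $c_p(j) = 2 \leftrightarrow \phi_j = U$ (recall that for $p \in \MotzPF{n}$ we always have $c_p(j) \leq 2$). Since two Motzkin parking functions are $\sim$-equivalent precisely when they share the same count vector, the equivalence classes in $\faktor{\MotzPF{n}}{\sim}$ are in bijection with the count vectors arising from Motzkin parking functions.

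Injectivity is then immediate: the displayed correspondence shows that the count vector $c_p$ can be read off directly from the path $\Phi(p)$ (each step type determines the corresponding count), so $\Phi(p) = \Phi(p')$ forces $c_p = c_{p'}$, hence $[p] = [p']$. This same observation re-confirms that $\Phi$ is constant on $\sim$-classes, and hence well-defined.

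The substance of the proof lies in surjectivity. Given a Motzkin path $M = \phi_1 \cdots \phi_n \in \Motz{n}$, I would define a count vector $c$ by reversing the correspondence above ($c(j) = 2, 1, 0$ according as $\phi_j = U, H, D$) and let $p$ be any parking preference in which spot $j$ is preferred by exactly $c(j)$ cars. By construction $c(j) \leq 2$ for all $j$, so it remains only to check that $p$ is genuinely a parking function; then $p \in \MotzPF{n}$ and $\Phi(p) = M$ as required. I would verify the parking property using the standard prefix criterion: a preference is a parking function if and only if $\sum_{j \leq k} c(j) \geq k$ for every $k \in [n]$ (the left-hand side being the number of cars preferring a spot $\leq k$). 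Writing $u_k$, $h_k$, $d_k$ for the numbers of $U$, $H$, $D$ steps among the first $k$ steps of $M$, one has $\sum_{j \leq k} c(j) = 2u_k + h_k$ while $k = u_k + h_k + d_k$, so that
\[
\sum_{j \leq k} c(j) - k = u_k - d_k,
\]
which is exactly the height of $M$ after $k$ steps. Since $M$ never goes below the $X$-axis this height is non-negative for all $k$, giving the prefix criterion; taking $k = n$ (where the height is $0$) also confirms $\sum_j c(j) = n$, so $p$ has the correct length.

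The main obstacle is this last step, and specifically the translation between the defining constraint on Motzkin paths and the parking property. The computation above makes that translation exact -- the parking prefix-deficit $\sum_{j \leq k} c(j) - k$ equals the path height $u_k - d_k$ -- so the ballot-type non-negativity of a Motzkin path is precisely the condition characterising parking functions. By contrast, well-definedness and injectivity are purely formal consequences of the fact that $\Phi$ encodes the count vector. One could alternatively try to phrase surjectivity through Theorem~\ref{thm:MotzPF_characterisation}, but since that result presupposes $p \in \PF{n}$, the parking-function check cannot be bypassed and the height computation remains the essential ingredient.
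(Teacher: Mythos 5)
Your proposal is correct and takes essentially the same route as the paper: both invert $\Phi$ by constructing a preference with the prescribed count vector (two cars per $U$ step, one per $H$, none per $D$) and then verify the parking property via the ballot-type non-negativity of Motzkin prefixes, which the paper phrases as $\vert \{ i \in [n];\, p_i \leq k \} \vert \geq k$ for its non-decreasing representative and you phrase as the identity $\sum_{j \leq k} c(j) - k = u_k - d_k \geq 0$. Your explicit height computation and separate injectivity check are a slightly cleaner packaging of the paper's prefix-counting and inverse-construction argument, but the underlying idea is identical.
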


\begin{proof}
As discussed above, the map is well defined since $\Phi$ is constant on the equivalence classes of $\MotzPF{n}$, and by Theorem~\ref{thm:prefConfig_Motzkin} its image is in $\Motz{n}$. 
To show that it is a bijection, we exhibit its inverse. Given a Motzkin path $\phi = \phi_1 \cdots \phi_n$, we construct a parking function using the following algorithm. 
\begin{enumerate}
\item Initialise $i = k = 1$.  
\item While $k \leq n$, do the following.
  \begin{enumerate}
  \item If $\phi_k = U$, set $p_i = p_{i+1} = k$ and $i = i+2$.
  \item If $\phi_k = H$, set $p_i = k$ and $i = i+1$.
  \item If $\phi_k = D$, do nothing (leave $i$ unchanged).
  \end{enumerate}
In all cases set $k = k+1$.
\item Output $p = (p_1, \cdots, p_n)$.
\end{enumerate}
Since $\phi$ has the same number of $U$ and $D$ steps, it is straightforward to check that this algorithm does indeed output a parking preference $p$. Moreover, by construction of the map $\Phi$, we have $\Phi(p) = \phi$, i.e.\ this construction defines an inverse for the map $\Phi$. It therefore remains to show that $p$ is indeed a parking function. Note that $p$ is non-decreasing by construction, i.e.\ $p_1 \leq \cdots \leq p_n$. As such, it is sufficient to show that for all $i \in [n]$, we have $p_i \leq i$ (see e.g.~\cite[Section~1.1]{Yan}).

For this, we first describe the construction of $p$ from $\phi$ in a slightly alternate form. We start from the empty sequence $S$, and initialise $s=1$ for the ``current spot''. Looking at the steps of the Motzkin path $\phi$ from left to right, we put the value $s$ into the sequence $S$ twice if we encounter a $U$ step, once if we encounter a $H$ step, and zero times if we encounter a $D$ step, then increase $s$ by one, and repeat (moving to the next step of the Motzkin path).

Because in any prefix  $\phi_1 \cdots \phi_k$ of $\phi$ the number of $U$ steps must be greater than or equal to the number of $D$ steps, this equivalent formulation implies that after $k$ iterations of the above algorithm, the length of the sequence $S$ must be greater than or equal to $k$. These iterations correspond exactly to placing the values $1, 2, \cdots, k$ into the sequence $S$ (since the current spot $s$ increases by one at each iteration). In other words, we have $\vert \{ i \in [n]; \, p_i \leq k \} \vert \geq k$ for all $k \in [n]$. We claim that this implies that for any $i \in [n]$, we have $p_i \leq i$. Otherwise, suppose that $p_i > i$ for some $i$. Since $p$ is non-decreasing, this means that $\{ j \in [n]; p_j \leq i \} \subseteq [i-1]$. But by the above, the left-hand set should have cardinality at least $i$, which yields the desired contradiction. Therefore we do indeed have $p_i \leq i$ for all $i \in [n]$, and so $p$ is a parking function, as desired. This completes the proof.
\end{proof}

Theorem~\ref{thm:bij_MotzPFequiv_Motz} can be viewed as a generalisation of the bijection between Motzkin paths and parking functions whose MVP outcome is the \emph{decreasing permutation} $\dec{n} := n (n-1) \cdots 1$, which was established by Harris \emph{et al.}~\cite{HarrisMVP}. We re-state that result in our context of Motzkin parking functions and their equivalence classes.

\begin{theorem}[{\cite[Theorem~4.2]{HarrisMVP}}]
\label{thm:bij_prefConfig_Motzkin}
For any $p \in \MotzPF{n}$,  there exists a unique $p' \in \MotzPF{n}$ such that $p \sim p'$, and $\OOMVP{n}(p') = \dec{n} $. In particular, $\Phi$ induces a bijection from the decreasing fibre $\FibMVP{n}{\dec{n}}$ to the set $\Motz{n}$ of Motzkin paths of length $n$.
\end{theorem}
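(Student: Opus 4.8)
The plan is to deduce the statement from the bijection $\Phi\colon \faktor{\MotzPF{n}}{\sim} \to \Motz{n}$ of Theorem~\ref{thm:bij_MotzPFequiv_Motz}. Since $\Phi$ depends only on the multiset of preferences and is constant on $\sim$-classes, the whole theorem reduces to two claims: (i) every parking function $p$ with $\OOMVP{n}(p) = \dec{n}$ is a Motzkin parking function; and (ii) each $\sim$-class of $\MotzPF{n}$ contains exactly one element whose MVP outcome is $\dec{n}$. Granting these, the ``unique $p'$'' assertion is precisely (ii), while the ``in particular'' assertion follows formally: by (i) the restriction of the quotient map $p \mapsto [p]$ to $\FibMVP{n}{\dec{n}}$ lands in $\faktor{\MotzPF{n}}{\sim}$, by (ii) it is a bijection onto $\faktor{\MotzPF{n}}{\sim}$, and post-composing with the bijection $\Phi$ of Theorem~\ref{thm:bij_MotzPFequiv_Motz} yields the desired bijection $\FibMVP{n}{\dec{n}} \to \Motz{n}$, which moreover coincides with $\Phi$.

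For existence in (ii) I would give an explicit construction. Note that $G_{\dec{n}}$ is the complete graph $K_n$, since $(j,i)$ is an inversion of $\dec{n}$ for every $j<i$. Given $\phi \in \Motz{n}$, form the non-crossing partial matching $M$ of $[n]$ obtained by matching each $D$-step to the nearest preceding unmatched $U$-step (leaving $H$-steps unmatched); viewing $M$ as a $1$-subgraph of $K_n$, set $p := \SubtoPF(M) \in \MVP{n}$. A direct count of how many cars prefer each spot in $p$, read off from Definition~\ref{def:SubtoPF}, shows that a spot is preferred by two cars exactly at an opener ($U$), by one car at a free vertex ($H$), and by no car at a closer ($D$); hence $\Phi(p) = \phi$, and in particular $p \in \MotzPF{n}$ by Theorem~\ref{thm:MotzPF_characterisation}. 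It then remains to verify that $M$ is \emph{valid}, i.e.\ $\OMVP{n}{p} = \dec{n}$. I would prove this by induction on the number of arcs of $M$, peeling off either an exterior unmatched vertex (the argument is then essentially the empty-subgraph base case of Proposition~\ref{pro:lower_bound}, since the corresponding car is the unique preferrer of an extreme spot and is never disturbed) or an innermost arc, and checking that the extra bumping it introduces sends exactly the right car to the right spot while leaving the remaining outcome intact.

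For (i) and uniqueness in (ii) I would work through the subgraph dictionary of Theorem~\ref{thm:MVPPF_subgraphs}, which identifies $\FibMVP{n}{\dec{n}}$ with the set $\Valid{\dec{n}}$ of valid $1$-subgraphs of $K_n$. The goal is to show that $\Valid{\dec{n}}$ is exactly the set of non-crossing partial matchings of $[n]$. The inclusion $\supseteq$ is the validity statement just proved. For $\subseteq$, Proposition~\ref{pro:upper_bound} already forces a valid $S$ to be $\overrightarrow{P_2}$-free; I would complement this with two analogous contradiction arguments showing that a valid $S$ can contain neither a vertex of out-degree two (a spot preferred by three cars, which would force $p$ out of $\MotzPF{n}$ and, on tracking the earliest such car as in the proof of Proposition~\ref{pro:upper_bound}, breaks the decreasing outcome) nor a pair of crossing arcs. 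Together these say $S$ is a non-crossing matching, which gives (i) at once. Finally, since the ``nearest unmatched $U$'' rule produces the unique non-crossing matching with a prescribed set of openers, closers, and free vertices, two valid subgraphs with the same image under $\Phi$ (equivalently, the same multiset of preferences) must coincide, yielding uniqueness.

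The main obstacle is the validity lemma for non-crossing matchings — that the constructed $p$ really parks with outcome $\dec{n}$ — together with its converse ruling out crossings. Unlike the horizontally separated case of Proposition~\ref{pro:lower_bound}, nested arcs force a bumped car to travel past spots that are themselves endpoints of inner arcs, so the induction must carefully track that an inner arc's displaced car has already vacated the relevant spot by the time the outer car is bumped; keeping this bookkeeping correct across nested and side-by-side arcs is the delicate part of the argument.
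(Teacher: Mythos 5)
Your proposal is essentially correct in structure, and it is worth noting at the outset that the paper does not prove this theorem at all: it is imported from Harris \emph{et al.}\ as \cite[Theorem~4.2]{HarrisMVP}. What your plan does is reassemble the machinery the paper itself builds in Section~\ref{subsec:Motz_non_crossing_diagrams}. The heart of your argument, the identification of $\Valid{\dec{n}}$ with the set of non-crossing partial matchings, is exactly Theorem~\ref{thm:nonCrossing_dec}, whose proof supplies both inclusions you sketch: your exclusion of out-degree-two vertices and of crossing arcs corresponds to the paper's Case~(C) and its non-crossing condition, your appeal to Proposition~\ref{pro:upper_bound} is its Case~(B), and your validity lemma for non-crossing matchings is its converse inclusion (proved there by induction on $n$ via the prime decomposition, peeling the outermost arc $(1,n)$, rather than your peeling of innermost arcs --- both work, with comparable bookkeeping). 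Combining this with Theorem~\ref{thm:bij_MotzPFequiv_Motz} and the opener/closer correspondence between non-crossing matchings and Motzkin paths, as you do, indeed yields the full statement, including uniqueness of the representative in each $\sim$-class; that is slightly more than the paper explicitly extracts from Theorem~\ref{thm:nonCrossing_dec}, which it only advertises as an alternate proof of the enumerative consequence.

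Two cautions about the parts you defer, since they are where all the work lies. First, when excluding out-degree-two vertices, the remark that such a configuration ``would force $p$ out of $\MotzPF{n}$'' cannot serve as the contradiction: claim~(i) is precisely what is being proved, so that reasoning would be circular. Only the tracking argument you allude to does the job; in the paper's Case~(C), with cars $\bar{k} < \bar{j} < \bar{i}$ all preferring spot $i$, one shows car $\bar{k}$ gets stranded in a spot strictly between $i$ and $j$ (spot $j$ must remain free until $\bar{i}$ arrives, for $\bar{j}$ to end up there, and no car ever prefers spot $j$), while every car arriving after $\bar{i}$ prefers a spot $< i$, so nothing can ever push $\bar{k}$ onward to spot $k$. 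Second, in your innermost-arc induction, the step you flag as delicate is genuinely the entire content: one must verify that when car $n+1-a$ bumps car $n+1-b$ from spot $a$, the spots $a+1, \dots, b-1$ are already occupied by their final occupants and spot $b$ is still empty, which requires checking that no car bumped earlier in the process can have strayed into the window $[a,b]$. Both gaps are fillable --- the paper fills the analogous ones --- but as written your proposal is a correct and well-aimed plan rather than a complete proof.
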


\subsection{Non-crossing arc diagrams}\label{subsec:Motz_non_crossing_diagrams}

Theorem~\ref{thm:bij_prefConfig_Motzkin} implies in particular that the decreasing fibres $\FibMVP{n}{\dec{n}}$ are enumerated by the Motzkin numbers. In this part we give a new bijective explanation of this fact by using our subgraph representation from Section~\ref{sec:general_case}. 

Note that the inversion graph of the decreasing permutation $\dec{n}$ is just the complete graph $K_n$ on $n$ vertices, since all pairs are inversions in $\dec{n}$. We can therefore in a sense forget the geometry of the inversion graph, and it will be convenient to think of subgraphs of $G_{\dec{n}}$ as \emph{arc diagrams}. An arc diagram is simply a subset of edges of $G_{\dec{n}}$, i.e.\ a set of (some) pairs $(j,i) \in [n]^2$ with $j < i$. In this context, $\Sub{\dec{n}}$ is the set of arc diagrams of $[n]$ such that for any $i \in [n]$ there is at most one arc $(j,i)$ with $j < i$.

We say that an arc diagram $\Delta \in \Sub{\dec{n}}$ is a \emph{non-crossing matching} if it satisfies the two following conditions.
\begin{enumerate}
\item \textbf{Matching condition}: for every vertex $i \in [n]$, there is at most one arc incident to $i$ in $\Delta$.
\item \textbf{Non-crossing condition}: no two arcs of $\Delta$ ``cross'', that is there are no four vertices $i < j < k < \ell$ such that $(i,k)$ and $(j, \ell)$ are both arcs in $\Delta$.
\end{enumerate}
We denote $\NC_n$ the set of non-crossing arc diagrams on $[n]$. It is well-known that non-crossing arc diagrams are enumerated by Motzkin numbers. For a simple bijection between $\NC_n$ and $\Motz{n}$, simply map $\Delta \in \NC_n$ to $\phi = \phi_1 \cdots \phi_n \in \Motz{n}$ by setting $\phi_i = U$ if $i$ is incident to a right-arc $(i, j)$ in $\Delta$ with $i < j$, $\phi_i = D$ if $i$ is incident to a left-arc $(j, i)$ in $\Delta$ with $j < i$, and $\phi_i = H$ if $i$ is an isolated vertex  in $\Delta$. Our main result of this section is the following, which gives an alternate proof of the enumerative consequence of Theorem~\ref{thm:bij_prefConfig_Motzkin}.

\begin{theorem}\label{thm:nonCrossing_dec}
Let $n \geq 1$. The map $\SubtoPF$ is a bijection from the set of non-crossing matchings $\NC_n$ to the decreasing fibre $\FibMVP{n}{\dec{n}}$.
\end{theorem}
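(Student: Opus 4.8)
The plan is to reduce the statement to the set identity $\Valid{\dec{n}} = \NC_n$. A non-crossing matching has at most one arc at each vertex, hence at most one \emph{left}-arc, so $\NC_n \subseteq \Sub{\dec{n}}$. By Theorem~\ref{thm:MVPPF_subgraphs}, $\SubtoPF$ is injective on all of $\Sub{\dec{n}}$, and its restriction to $\Valid{\dec{n}}$ is a bijection onto $\FibMVP{n}{\dec{n}}$ (it inverts $\PFtoSub$). Thus $\SubtoPF$ is automatically injective on $\NC_n$, and it maps $\NC_n$ bijectively onto $\FibMVP{n}{\dec{n}}$ as soon as one proves $\Valid{\dec{n}} = \NC_n$. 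Throughout I write $C_t := \pi_t = n+1-t$ for the car that must end in spot $t$ when the outcome is $\dec{n}$; note that the cars arrive in the order $C_n, C_{n-1}, \dots, C_1$.

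For the inclusion $\NC_n \subseteq \Valid{\dec{n}}$ I would induct on $n$, decomposing a non-crossing matching $\Delta$ according to vertex $1$. If $1$ is isolated, then $\Delta$ is a non-crossing matching on $\{2,\dots,n\}$; no car with home spot in $[2,n]$ ever prefers spot $1$ nor is bumped leftward into it, so spot $1$ stays empty until the last car $C_1$ (which prefers $1$) arrives and parks there, and the induction hypothesis handles $[2,n]$. If instead $1$ is matched to some $m$ by an arc $(1,m)$, the non-crossing condition forbids any arc from straddling $m$, so $\Delta$ splits into a non-crossing matching on the block $[1,m]$ (with $1$ matched to $m$) and one on the block $[m+1,n]$. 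Cars with home in $[1,m]$ only ever prefer spots in $[1,m]$ and, being valid parking functions by induction, only ever occupy spots in $[1,m]$, and symmetrically for the other block; hence the two blocks evolve as independent MVP processes, to each of which the induction hypothesis applies. The one genuinely new point is the spanning arc $(1,m) = (1,n)$: here I peel it, apply induction to the interior block $[2,n-1]$, and observe that $C_n$ (which first parks in spot $1$) is bumped by the last car $C_1$ and cascades rightward through the now-full interior to land exactly in spot $n$. In every case the outcome is $\dec{n}$, so $\Delta$ is valid.

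For the reverse inclusion I argue by contraposition: if $S \in \Sub{\dec{n}}$ fails to be a non-crossing matching, then $S$ is invalid. Such an $S$ must display one of three local obstructions: (a) a vertex incident to both a left-arc and a right-arc, i.e.\ a directed path $\overrightarrow{P_2}$; (b) a vertex with two right-arcs $(v,a),(v,b)$, $a<b$; or (c) two crossing arcs $(i,k),(j,\ell)$ with $i<j<k<\ell$. Case (a) is exactly Proposition~\ref{pro:upper_bound}. For (b) and (c) I would trace the process to exhibit a ``swap'': in each case there are two right-endpoints $i_1 < i_2$ (namely $\{a,b\}$, resp.\ $\{k,\ell\}$) such that the car $C_{i_2}$, which must reach the farther spot $i_2$, is bumped out of its preferred spot \emph{before} the car $C_{i_1}$, starting from a preference that is weakly to the right of $C_{i_1}$'s; consequently $C_{i_2}$ seizes the nearer free target and ends in spot $i_1$, while $C_{i_1}$ overshoots to spot $i_2$. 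Then spot $i_1$ is occupied by $C_{i_2} = \pi_{i_2} \neq \pi_{i_1}$, so $\OMVP{n}{\SubtoPF(S)} \neq \dec{n}$ and $S$ is invalid. Combining the two inclusions yields $\Valid{\dec{n}} = \NC_n$, which is the theorem.

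The main obstacle is the necessity direction: making the ``swap'' rigorous when the \emph{other} arcs of $S$ perturb the occupancy along the two cascades. I expect to control this exactly as in the sufficiency argument, by selecting an \emph{innermost} offending pair of arcs so that the intervening vertices reduce to isolated spots and the computation collapses to the clean two-arc case traced above. As a pragmatic alternative for surjectivity one may instead invoke Theorem~\ref{thm:bij_prefConfig_Motzkin}: for $\Delta \in \NC_n$ the composite $\Phi \circ \SubtoPF$ sends $\Delta$ to the Motzkin path recording each vertex as $U$, $D$, or $H$ according to whether it is a right-arc endpoint, a left-arc endpoint, or isolated, which is precisely the standard bijection $\NC_n \to \Motz{n}$; since $\Phi$ is a bijection on the fibre, $\SubtoPF|_{\NC_n}$ must be one too. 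The first route is preferable in that it re-proves the Motzkin enumeration of the decreasing fibre independently of Theorem~\ref{thm:bij_prefConfig_Motzkin}.
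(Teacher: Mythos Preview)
Your overall strategy matches the paper's: reduce to $\Valid{\dec{n}} = \NC_n$, prove $\NC_n \subseteq \Valid{\dec{n}}$ by induction on $n$ via a block decomposition (this is exactly the paper's prime-factor decomposition, phrased in terms of the arc at vertex~$1$), and prove the reverse inclusion by contraposition, splitting into the same three local obstructions. Your sufficiency argument is correct and essentially identical to the paper's.

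The necessity direction, however, has a genuine gap, and your proposed fix does not work. Selecting an ``innermost'' offending pair does \emph{not} make the intervening vertices isolated: in case~(c), with an innermost crossing $(i,k),(j,\ell)$, there can still be arcs entirely nested inside $(i,j)$ or $(k,\ell)$, and in case~(b) the vertices between $v$ and $a$ may carry arcs too. So the computation does not collapse to the clean two-arc case. Relatedly, your ``swap'' conclusion (that $C_{i_2}$ ends precisely in spot $i_1$ and $C_{i_1}$ in $i_2$) is stronger than what is true in general and stronger than what is needed; the paper only shows that one specific car fails to reach its target. Concretely, for case~(b) the paper picks $a,b$ to be the two \emph{smallest} right-neighbours of $v$ (so $C_b,C_a,C_v$ are the last three cars preferring~$v$), argues that spot~$a$ must be free when $C_v$ arrives (else $C_a$ could not land there), hence $C_b$ was first bumped to some spot strictly between $v$ and $a$, and then shows any further bump of $C_b$ would require a car whose preference exceeds~$v$ yet arrives after $C_v$, which is impossible for $\dec{n}$. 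For case~(c), the paper fixes $(i,k)$ and takes $j$ \emph{rightmost} among left endpoints whose arc crosses $(i,k)$; this does not isolate the vertices in $(j,k)$ but forces their preferences to lie in $(j,k)$, so that when $C_j$ arrives all spots in $[j,k]$ are full and $C_\ell$ is pushed to a spot $\geq k$, blocking $C_k$ from ever reaching~$k$.

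Your alternative route is sound and is worth stating cleanly: once $\NC_n \subseteq \Valid{\dec{n}}$ is established, $\SubtoPF$ is injective on $\NC_n$ with image inside $\FibMVP{n}{\dec{n}}$, and $\vert \NC_n\vert = \vert \Motz{n}\vert = \vert \FibMVP{n}{\dec{n}}\vert$ (the last equality by Theorem~\ref{thm:bij_prefConfig_Motzkin}) forces surjectivity. The paper instead does the direct necessity argument, which yields an independent proof of the Motzkin enumeration.
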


\begin{example}\label{ex:NonCrossing}

Consider the non-crossing matching $\Delta$ in Figure~\ref{fig:NonCrossing} below. We wish to compute the corresponding MVP parking function $p := \SubtoPF(\Delta)$. To get the parking preference $p_i$ of car $i$, we look at the vertex $n+1-i$ (equivalently, the $i$-th vertex from the right). If there is a left-arc incident to that vertex, we set $p_i$ to be the label of the left end-point $j$ of that arc. Otherwise we set $p_i = n+1-i$. We get the parking function $p = (11, 7, 8, 8, 7, 1, 3, 4, 3, 2, 1)$. One can check that we do indeed have $\OMVP{11}{p} = \dec{11}$, as desired.

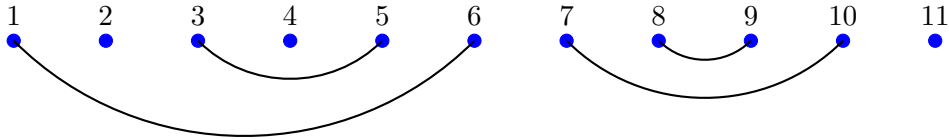
\begin{figure}[ht]
  \centering
    \begin{tikzpicture}[scale=0.35]
    \foreach \xx in {1,...,11} {
      \arcdot{3.5*\xx-3.5}{$\xx$}{blue}
    }
    \arcdraw{0}{17.5}
    \arcdraw{7}{14}
    \arcdraw{21}{31.5}
    \arcdraw{24.5}{28}
    \end{tikzpicture}
  \caption{An example of a non-crossing matching $\Delta$ on $11$ vertices. The corresponding parking function is $p := \SubtoPF(\Delta) = (11, 7, 8, 8, 7, 1, 3, 4, 3, 2, 1)$.\label{fig:NonCrossing}}
\end{figure}

\end{example}

\begin{proof}[Proof of Theorem~\ref{thm:nonCrossing_dec}]
Note that we already know by Theorem~\ref{thm:MVPPF_subgraphs} that the map is injective, so it suffices to show that $\SubtoPF \left(\NC_n \right) = \FibMVP{n}{\dec{n}}$, or equivalently, that $\Valid{\dec{n}} = \NC_n$. We first show that inclusion $\Valid{\dec{n}} \subseteq \NC_n$.

Let $\Delta \in \Valid{\dec{n}}$ be a valid $1$-arc diagram, and denote $p = (p_1, \cdots, p_n) := \SubtoPF(\Delta)$ the corresponding MVP parking function, satisfying $\OMVP{n}{p} = \dec{n}$. To simplify notation, for $i \in [n]$ we denote $\bar{i} := n+1-i$ the car that ends up in spot $i$.
We want to show that $\Delta$ is a non-crossing matching.
For this, we need to prove two conditions: the matching condition, and the non-crossing condition.

\textbf{Matching condition.} Seeking contradiction, suppose there is a vertex which is incident to two arcs. There are three cases to consider here.

Case (A): some vertex $k$ is incident to two left-arcs, That is, there exist $i < j < k$ such that $(i,k)$ and $(j,k)$ are arcs of $\Delta$ (see Figure~\ref{fig:two_edges_To_1}). This directly contradicts the condition that $\Delta$ is a $1$-arc diagram (in other words that only one car ends up in spot $k$), so there is nothing to show here.

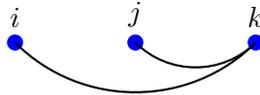
\begin{figure}[ht]
  \centering
    \begin{tikzpicture}[scale=0.4]
    \arcdot{0}{$i$}{blue}
    \arcdot{4}{$j$}{blue}
    \arcdot{8}{$k$}{blue}
    \arcdraw{4}{8};
    \arcdraw{0}{8}
    \end{tikzpicture}
  \caption{Case where $k$ is incident to two left-arcs.\label{fig:two_edges_To_1}}
\end{figure}

Case (B): some vertex $j$ is incident to both a left-arc and a right-arc. That is, there exist $i < j < k$ such that $(i,j)$ and $(j,k)$ are arcs of $\Delta$ (see Figure~\ref{fig:two_edges_consec}). In this case, $\Delta$ is not $\overrightarrow{P_2}$-free, so by Proposition~\ref{pro:upper_bound} $\Delta$ cannot be valid, which is a contradiction.

\begin{figure}[ht]
  \centering
    \begin{tikzpicture}[scale=0.4]
    \arcdot{0}{$i$}{blue}
    \arcdot{4}{$j$}{blue}
    \arcdot{8}{$k$}{blue}
    \arcdraw{0}{4}
    \arcdraw{4}{8}
    \end{tikzpicture}
  \caption{Case where $j$ is incident to both a left-arc and a right-arc.\label{fig:two_edges_consec}}
\end{figure}
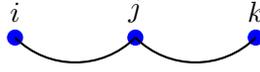

Case (C): some vertex $i$ is incident to two right-arcs. That is, there exist $i < j < k$ such that $(i,j)$ and $(i,k)$ are arcs of $\Delta$ (see Figure~\ref{fig:two_edges_To_3}).

\begin{figure}[ht]
  \centering
    \begin{tikzpicture}[scale=0.4]
    \arcdot{0}{$i$}{blue}
    \arcdot{4}{$j$}{blue}
    \arcdot{8}{$k$}{blue}
    \arcdraw{0}{4}
    \arcdraw{0}{8}
    \end{tikzpicture}
  \caption{Case where $i$ is incident to two right-arcs.\label{fig:two_edges_To_3}}
\end{figure}
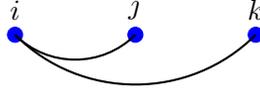

By Case~(B) we have that $i$ cannot be incident to a left-arc in $\Delta$. Therefore, in the parking function, we have $p_{\bar{i}}=p_{\bar{j}}=p_{\bar{k}}=i$. Without loss of generality, we may assume that $j$ and $k$ are the first two vertices incident to $i$, i.e. that there is no arc $(i, \ell)$ for $\ell < k$ and $\ell \neq j$. This is equivalent to cars $\bar{k}$, $\bar{j}$ and $\bar{i}$ being the last three cars to prefer spot $i$.

Because $\bar{k} < \bar{j} < \bar{i}$, car $\bar{k}$ arrives and parks on spot $i$ first, and then car $\bar{j}$ parks on spot $i$ again, bumping car $\bar{k}$. Since car $\bar{i}$ will subsequently bump car $\bar{j}$ from spot $i$ (no other car prefers spot $i$ in between by the above assumption), and car $\bar{j}$ needs to end up parking in spot $j$, this implies that spot $j$ must be free when car $\bar{i}$ arrives. In particular, this implies that after being bumped from spot $i$ by car $\bar{j}$, car $\bar{k}$ must park in some spot $\ell$ with $i < \ell < j$.

But car $\bar{k}$ should end up in spot $k$. For this to occur, it must therefore be bumped by another car, say $\bar{k'}$. According to the above, such a car must arrive after $\bar{i}$, i.e. $k' < i$, since $j$ must still be free when $\bar{i}$ arrives, but occupied when $\bar{k'}$ arrives so that $\bar{k}$ ends up in $k$, ``skipping'' over spot $j$. But by construction, the preferred spot of any car $\bar{k'}$ with $k' < i$ must be less than $i$, which contradicts the fact that car $\bar{k'}$ would bump $\bar{k}$ from a spot $\ell > i$. This concludes the proof of the matching condition.

\textbf{Non-crossing condition}. We now show that if $\Delta$ is a valid $1$-arc diagram, then $\Delta$ is non-crossing. Seeking contradiction, suppose that $\Delta$ has an edge-crossing. That is, there exist $i < j < k < \ell$ such that the edges $(i,k)$ and $(j,\ell)$ are both in the diagram $\Delta$ (see Figure~\ref{fig:edge_Crossing}).

\begin{figure}[ht]
  \centering
    \begin{tikzpicture}[scale=0.4]
    \arcdot{0}{$i$}{blue}
    \arcdot{4}{$j$}{blue}
    \arcdot{8}{$k$}{blue}
    \arcdot{12}{$\ell$}{blue}
    \arcdraw{0}{8}
    \arcdraw{4}{12}
    \end{tikzpicture}
  \caption{Case where the diagram $\Delta$ contains an edge-crossing.\label{fig:edge_Crossing}}
\end{figure}
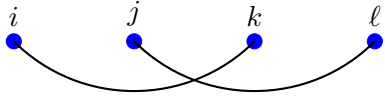

By construction, and applying Case~(B) from the matching condition above, we have that $p_{\bar{i}} = p_{\bar{k}} = i$, and $p_{\bar{j}} = p_{\bar{\ell}} = j$. Moreover, fixing $i$ and $k$, we can consider the right-most $j$ whose right-arc crosses the arc $(i,k)$, which also implies that $k$ is the left-most $k$ whose left-arc crosses the arc $(j, \ell)$. Therefore we can assume without loss of generality that for any $m$ with $j < m < k$, we have $j < p_{\bar{m}} < k$. In other words, the cars arriving between cars $\bar{k}$ and $\bar{j}$ occupy exactly the spots between $j$ and $k$. 

Because $i < j < k < \ell$ and $\bar{i} > \bar{j} > \bar{k} > \bar{\ell}$, car $\bar{\ell}$ first arrives and parks on spot $j$ (since $p_{\bar{\ell}}=j$). 
Then car $\bar{k}$ arrives and parks on spot $i$ according to the preference $p_{\bar{k}}=i$, which does not influence the car $\bar{\ell}$. 
However, after car $\bar{j}$ arrives, it prefers spot $j$ again, bumping the current car $\bar{\ell}$ which was previously occupying spot $j$. But by the assumption above all spaces between $j$ and $k$ have been occupied at this point (by cars arriving between $\bar{k}$ and $\bar{j}$). 
Therefore, after being bumped by car $\bar{j}$, car $\bar{\ell}$ parks in some spot $\ell'$ with $\ell' \geq k$. In particular, this implies that after car $\bar{j}$ has arrived, the spot $k$ must be occupied (either by $\bar{\ell}$ or by a previous car). 
As such, there is no way for car $\bar{k}$ to subsequently occupy spot $k$ after being bumped by car $\bar{i}$, which yields the desired contradiction.
This concludes the proof of the inclusion $\Valid{\dec{n}} \subseteq \NC_n$.

Before proving the converse, we first introduce some additional notation. We say that a non-crossing matching $\Delta \in \NC_n$ is \emph{prime} if it contains the arc $(1,n)$. A non-crossing matching $\Delta$ that is not prime can be uniquely decomposed into a disjoint union of \emph{prime factors}, where the prime factors are simply the non-crossing matchings induced on subsets $\{i, \cdots j\}$ such that $(i,j)$ is an arc of $\Delta$ and there is no arc $(i',j')$ of $\Delta$ with $i' < i$ and $j < j'$. Figure~\ref{fig:prime_decomp} shows an example of the prime decomposition of a non-crossing matching, with different colours corresponding to the three different prime factors.

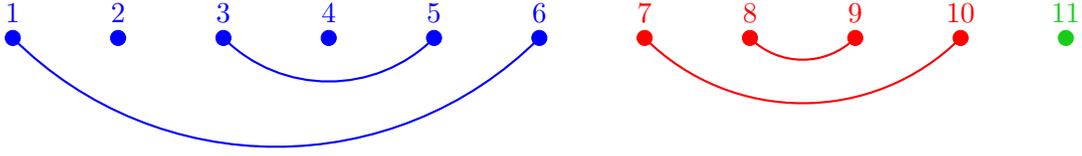
\begin{figure}[ht]
  \centering
    \begin{tikzpicture}[scale=0.4]
    \foreach \xx in {1,...,6} {
      \draw [blue, fill=blue] (3.5*\xx-3.5,0) circle [radius=0.25];
      \node [above, yshift=2pt] at (3.5*\xx-3.5,0) {\textcolor{blue}{$\xx$}};
    }
    \foreach \xx in {7,...,10} {
      \draw [red, fill=red] (3.5*\xx-3.5,0) circle [radius=0.25];
      \node [above, yshift=2pt] at (3.5*\xx-3.5,0) {\textcolor{red}{$\xx$}};
    }
    \draw [mygreen, fill=mygreen] (35,0) circle [radius=0.25];
    \node [above, yshift=2pt] at (35,0) {\textcolor{mygreen}{$11$}};
    \draw [thick, color=blue, out=-45, in=225] (0,0) to (17.5,0);
    \draw [thick, color=blue, out=-45, in=225] (7,0) to (14,0);
    \draw [thick, color=red, out=-45, in=225] (21,0) to (31.5,0);
    \draw [thick, color=red, out=-45, in=225] (24.5,0) to (28,0);
    \end{tikzpicture}
  \caption{An example of the prime decomposition of a non-crossing matching. The prime factors are the matchings induced on the intervals $[1,6]$ (in blue), $[7,10]$ (in red), and the isolated vertex $11$ (in green).\label{fig:prime_decomp}}
\end{figure}

We are now equipped to show the converse inclusion $\NC_n \subseteq \Valid{\dec{n}}$. That is, we show that for any non-crossing matching $\Delta \in \NC_n$, we have $p = \SubtoPF(\Delta) = (p_1,\cdots,p_n) \in \FibMVP{n}{\dec{n}}$ (i.e.\ $\Delta$ is valid). We proceed by induction on $n \geq 1$. For $n=1$, the only non-crossing matching consists of a single isolated vertex $1$, whose corresponding parking function is $p = (1)$. We then have $\OMVP{1}{p} = 1 = \dec{1}$, as desired.

For the induction step, fix $n > 1$ and suppose that the result holds for all $k < n$. Let $\Delta \in \NC_n$ be a non-crossing matching, and $p := \SubtoPF(\Delta) = (p_1,\cdots,p_n)$ the corresponding parking function. If $\Delta$ is not prime (does not contain the arc $(1,n)$), then the result follows immediately by applying the induction hypothesis to each prime factor of $\Delta$. Therefore it remains to consider the case where $(1,n)$ is an arc of $\Delta$. This means that $p_{\overline{1}} = p_{\overline{n}} = 1$ by construction, i.e.\ cars $1$ and $n$ both prefer spot $1$. Define $\Delta' \in \NC_{n-1}$ to be the non-crossing matching with vertex set $[n-1]$ obtained by deleting the vertex $n$ and the arc $(1,n)$ from $\Delta$, and let $p' := \SubtoPF(\Delta')$ be the corresponding parking function. Note that we have $p'_i = p_i$ for all $i \in [n-1]$. We apply the induction hypothesis to $\Delta'$, which means that $\OMVP{n-1}{p'} = \dec{n-1}$. In other words, following the MVP parking process for $p'$, car $\overline{i}$ ends up in spot $i$ for all $i \in [n-1]$. 

Now consider the MVP parking process for $p$. Car $1 = \overline{n}$ is the first to enter, and parks in its preferred spot $1$. Cars $\overline{n-1}, \cdots \overline{2}$ then enter in that order. Since none of these cars prefers spot $1$, car $1$ does not move during this process. Moreover, since $p_i = p'_i$ for all $i \in [n-1]$, the parking process of these cars for $p$ follows exactly that for $p'$. In particular, car $\overline{i}$ ends up in spot $i$ for all $i \in \{2, \cdots, n-1\}$. Finally, car $n = \overline{1}$ enters the car park. It parks in its preferred spot $1$, as desired. When it does so, it bumps car $1$. By the above, spots $2, \cdots, n-1$ have all been occupied at this point, so car $1 = \overline{n}$ parks in the (only) remaining spot $n$, as desired. This concludes the proof.
\end{proof}

\begin{remark}\label{rem:matching_not_general}
In the proof of the matching condition for a valid $1$-arc diagram $\Delta$, we can ask which of the three cases (A), (B), (C) are still forbidden for a general permutation $\pi \neq \dec{n}$. Case~(A) obviously still holds by definition of $1$-subgraphs. Case~(B) also still holds in the sense of Proposition~\ref{pro:upper_bound}. That is, there can be no three spots $i < j < k$ with cars $\pi_i > \pi_j > \pi_k$ such that $p_{\pi_k} = j$ and $p_{\pi_j} = p_{\pi_i} = i$.

However, Case~(C) no longer holds. Indeed, consider the parking preference $p = (1,1,1,2)$. In this case $\pi = \OMVP{4}{p} = 3421$. Figure~\ref{fig:321pattern} illustrates the corresponding subgraph $S \in \Sub{3421}$, which has edges $(1,3)$ and $(1,4)$ with $\pi_4 < \pi_3 < \pi_1$. The key difference is the presence of the car $k' = 4$ which arrives last and bumps car $1$ from spot $2$ to spot $4$. In the decreasing permutation case this could not occur. 

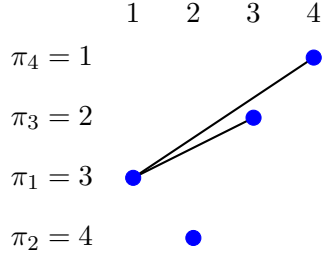
\begin{figure}[ht]
  \centering
    \begin{tikzpicture}[scale=0.4]
    \draw [thick] (2,-6)--(6,-4);
    \draw [thick] (2,-6)--(8,-2);
    \tdot{2}{-6}{blue}
    \tdot{4}{-8}{blue}
    \tdot{6}{-4}{blue}
    \tdot{8}{-2}{blue}
    \node at (2,-0.5) {$1$};
    \node at (4,-0.5) {$2$};
    \node at (6,-0.5) {$3$};
    \node at (8,-0.5) {$4$};
    \node [left] at (1,-2) {$\pi_4 = 1$};
    \node [left] at (1,-4) {$\pi_3 = 2$};
    \node [left] at (1,-6) {$\pi_1 = 3$};
    \node [left] at (1,-8) {$\pi_2 = 4$};
    \end{tikzpicture}
  \caption{The subgraph corresponding to the MVP parking function $(1,1,1,2)$.\label{fig:321pattern}}
\end{figure}
\end{remark}

\begin{remark}\label{rem:non_crossing_not_general}
Similarly, we can see that the non-crossing condition does not hold for general valid $1$-subgraphs. Indeed, car $\bar{\ell}$ could first be bumped to a spot between $j$ and $k$ and be subsequently bumped into spot $\ell$ by a car arriving after $\bar{i}$. For example, consider the parking preference $p = (2,1,2,1,3)$, whose outcome is $\pi = \OMVP{5}{p} = 43521$. Figure~\ref{fig:4321pattern} illustrates the corresponding valid $1$-subgraph, which has edges $(1,4)$ and $(2,5)$ with $\pi_5 < \pi_4 < \pi_2 < \pi_1$ (corresponding to crossing edges in an arc diagram representation).

\begin{figure}[ht]
  \centering
    \begin{tikzpicture}[scale=0.4]
    \draw [thick] (2,-8)--(8,-4);
    \draw [thick] (4,-6)--(10,-2);
    \tdot{2}{-8}{blue}
    \tdot{4}{-6}{blue}
    \tdot{6}{-10}{blue}
    \tdot{8}{-4}{blue}
    \tdot{10}{-2}{blue}
    \node at (2,-0.5) {$1$};
    \node at (4,-0.5) {$2$};
    \node at (6,-0.5) {$3$};
    \node at (8,-0.5) {$4$};
    \node at (10,-0.5) {$5$};
    \node [left] at (1,-2) {$\pi_5 = 1$};
    \node [left] at (1,-4) {$\pi_4 = 2$};
    \node [left] at (1,-6) {$\pi_2 = 3$};
    \node [left] at (1,-8) {$\pi_1 = 4$};
    \node [left] at (1,-10) {$\pi_3 = 5$};
    \end{tikzpicture}
  \caption{The subgraph corresponding to the MVP parking function $(2,1,2,1,3)$.\label{fig:4321pattern}}
\end{figure}
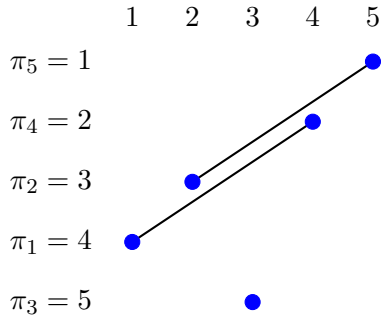
\end{remark}

\section{The complete bipartite case}\label{sec:m_n}

In this section we study the case where the inversion graph is the complete bipartite graph. Given $m,n \geq 1$, the \emph{complete bipartite permutation} is the permutation $\bipart{m}{n} := (n+1)(n+2)\cdots (n+m)12\cdots n$. In this case, the inversion graph $G_{\bipart{m}{n}}$ is the complete bipartite graph $K_{m,n}$, whose edges are all pairs $(i, m+j)$ for $i \in [m]$ and $j \in [n]$.

\begin{example}\label{ex:m_n}
Consider the permutation $\bipart{m}{n} = 4567123$ for $m=4$ and $n=3$. Figure~\ref{fig:m_n} shows one example of its $1$-subgraphs.

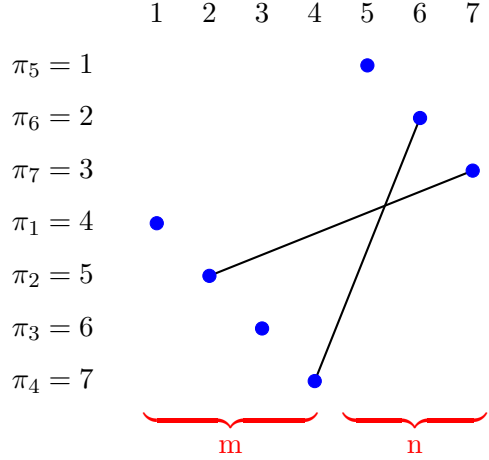
\begin{figure}[ht]
\centering
\begin{tikzpicture}[scale=0.35]
    \draw [thick] (4,-9)--(14,-5);
    \draw [thick] (8,-13)--(12,-3);    
    \tdot{2}{-7}{blue}
    \tdot{4}{-9}{blue}
    \tdot{6}{-11}{blue}
    \tdot{8}{-13}{blue}
    \tdot{10}{-1}{blue}
    \tdot{12}{-3}{blue}
    \tdot{14}{-5}{blue}
    \node at (2,1) {$1$};
    \node at (4,1) {$2$};
    \node at (6,1) {$3$};
    \node at (8,1) {$4$};
    \node at (10,1) {$5$};
    \node at (12,1) {$6$};
    \node at (14,1) {$7$};
    
    \node [left] at (0,-1) {$\pi_5 = 1$};
    \node [left] at (0,-3) {$\pi_6 = 2$};
    \node [left] at (0,-5) {$\pi_7 = 3$};
    \node [left] at (0,-7) {$\pi_1 = 4$};
    \node [left] at (0,-9) {$\pi_2 = 5$};
    \node [left] at (0,-11) {$\pi_3 = 6$};
    \node [left] at (0,-13) {$\pi_4 = 7$};
    
    \node[rotate = 0, red] at (4.8, -14.5) {$\underbrace{\hspace{2.3cm}}$};
    \node[red] at (4.8,-15.5) {m};
    \node[rotate = 0, red] at (11.8, -14.5) {$\underbrace{\hspace{1.9cm}}$};
    \node[red] at (11.8,-15.5) {n};
\end{tikzpicture}
\caption{One $1$-subgraph $S \in \Sub{\bipart{4}{3}}$ for the complete bipartite permutation $\bipart{4}{3} = 4567123$.\label{fig:m_n}}
\end{figure}
\end{example}

In this work, we focus on the case $n=2$, and obtain the following enumerative formula for the MVP outcome fibre.

\begin{theorem}\label{thm:enum_m_2}
For any $m \geq 0$, we have $|\mathcal{O}^{-1}_{\MVP{m+2}}(\bipart{m}{2})| = m+1 + \lfloor \frac{(m+1)^2}{2} \rfloor$.
\end{theorem}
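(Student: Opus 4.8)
The plan is to invoke the bijection of Theorem~\ref{thm:MVPPF_subgraphs} between the fibre $\FibMVP{m+2}{\bipart{m}{2}}$ and the set $\Valid{\bipart{m}{2}}$ of valid $1$-subgraphs of $G_{\bipart{m}{2}} = K_{m,2}$, and to count the latter directly. The only vertices that can receive a left-arc are $m+1$ and $m+2$, so a $1$-subgraph is completely determined by a pair $(a,b)\in\{0,1,\ldots,m\}^2$: here $a$ is the left endpoint of the unique arc into $m+1$ (with $a=0$ meaning no such arc), and $b$ the left endpoint of the unique arc into $m+2$. This recovers the $(m+1)^2$ subgraphs of Corollary~\ref{cor:Harris_bound}. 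Writing $p=\SubtoPF(S)$ out via Definition~\ref{def:SubtoPF}, one finds $p_{i+2}=i$ for all $i\in[m]$, while $p_1=a$ (resp.\ $p_1=m+1$ if $a=0$) and $p_2=b$ (resp.\ $p_2=m+2$ if $b=0$).

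First I would dispose of the subgraphs with at most one arc. Any two arcs $(a,m+1)$ and $(b,m+2)$ overlap horizontally (both spans contain $m+1$), so such a subgraph is horizontally separated precisely when it has at most one arc; by Proposition~\ref{pro:lower_bound} all of these are valid. There are $2m+1$ of them (the empty subgraph, the $m$ with a single arc into $m+1$, and the $m$ with a single arc into $m+2$), corresponding to the cases $a=0$ or $b=0$.

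The crux is to decide validity for the $m^2$ subgraphs with $a,b\in[m]$ (both arcs present). Here I would run the MVP process on $p$ directly. Cars $1$ and $2$ enter first and come to rest near spots $a$ and $b$; then the ``wave'' of cars $3,4,\ldots,m+2$, which prefer spots $1,2,\ldots,m$ respectively and arrive in that order, sweeps through the lot from left to right. The key structural observation is that an intruder car sitting in a spot $s\le m$ is bumped exactly when the wave reaches $s$ and is pushed one spot to the right; once the wave has pushed the two intruders into adjacent spots they \emph{leapfrog} each other, so that from then on exactly one of them is bumped at each wave step, the two alternating. Moreover the car bumped at the final step $s=m$ is precisely the one that ends in spot $m+2$, while the other ends in spot $m+1$; hence $S$ is valid if and only if car $2$ is the car bumped at step $m$. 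Tracking the positions through the initial phase (including the immediate collision when $a=b$, and the order in which the wave first reaches the lower intruder when $a\ne b$) pins down where the leapfrog regime begins and which intruder is bumped there; a parity count then yields the criterion: $S$ is valid iff $m-b$ is even when $a<b$, iff $m-a$ is even when $a=b$, and iff $m-a$ is odd when $a>b$. This multi-regime bookkeeping is the main obstacle.

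Finally I would count the valid two-arc subgraphs. Summing the criterion, the pairs with $a<b$ contribute $\sum_{b\equiv m}(b-1)$ (valid $b$ satisfy $b\equiv m\pmod 2$, each allowing $b-1$ choices of $a$), the pairs with $a>b$ contribute $\sum_{a\not\equiv m}(a-1)$, and the diagonal contributes $\#\{k\in[m]:k\equiv m\pmod 2\}$. The first two sums range over complementary parity classes, so they reassemble into $\sum_{k=1}^m(k-1)=\tfrac{m(m-1)}{2}$, giving a total of $\tfrac{m(m-1)}{2}+\lceil m/2\rceil=\lceil m^2/2\rceil$ valid two-arc subgraphs. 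Adding the $2m+1$ subgraphs from the first step yields $\left\vert\Valid{\bipart{m}{2}}\right\vert=2m+1+\lceil m^2/2\rceil$, and a short computation splitting on the parity of $m$ confirms that this equals $m+1+\lfloor (m+1)^2/2\rfloor$, as claimed.
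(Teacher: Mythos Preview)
Your approach is correct and genuinely different from the paper's. The paper proceeds by induction on $m$: a bijection (Lemma~\ref{lem:addfirst_valid}) shows that valid $1$-subgraphs where vertex $1$ is isolated are counted by the $m-1$ case, and then three separate lemmas (Lemmas~\ref{lem:twoedges_noncrossing}, \ref{lem:twoedges_crossing}, \ref{lem:twoedges_1st_m+1_m+2}) analyse only those two-arc subgraphs where one arc is incident to vertex~$1$, obtaining a parity criterion in each case. Your route is a direct count: you parametrise \emph{all} $1$-subgraphs by $(a,b)\in\{0,\ldots,m\}^2$, dispose of the $2m+1$ subgraphs with at most one arc via Proposition~\ref{pro:lower_bound}, and then establish the parity criterion for every two-arc subgraph at once via the leapfrog argument. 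Your general criterion specialises to the paper's three lemmas (e.g.\ Lemma~\ref{lem:twoedges_noncrossing} is your $a=1<b=j$ case, valid iff $m-b$ even iff $m+j$ even), and the summation telescopes cleanly because the $a<b$ and $a>b$ contributions range over complementary parity classes.

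Both arguments rest on the same structural fact---that once cars $1$ and $2$ occupy adjacent spots, each subsequent wave car flips their relative order---but the paper verifies this only for arcs touching vertex~$1$, deferring the rest to induction, whereas you prove it in one shot for arbitrary $(a,b)$. Your direct count is tidier arithmetically; the paper's inductive decomposition is more modular and perhaps more amenable to generalising to $n>2$. The one place your writeup is thinner than the paper's is the ``multi-regime bookkeeping'' you flag: the paper's Tables~\ref{tab:twoedges_noncrossing}--\ref{tab:twoedgesto1} make the flip pattern explicit for its restricted cases, and a full version of your proof would need the analogous verification that after the initial phase the intruders are indeed adjacent at spots $k,k+1$ with the claimed orientation before the leapfrog begins.
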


We will prove the theorem by induction on $m$, by seeing how many ``additional'' valid $1$-subgraphs there are in $G_{\bipart{m+1}{2}}$ compared to those in $G_{\bipart{m}{2}}$.

\begin{lemma}\label{lem:addfirst_valid}
For $m \geq 1$, set $m' := m+1$. We define a map $\Sub{\bipart{m}{2}} \rightarrow \Sub{\bipart{m'}{2}}$, $S \mapsto S'$, as follows. For a $1$-subgraph $S \in \Sub{\bipart{m}{2}}$, we define a $1$-subgraph $S' \in \Sub{\bipart{m'}{2}}$ by increasing vertex labels in $S$ by one, and inserting a new isolated vertex labelled $1$ (see Figure~\ref{fig:insert_valid} below). Then the map $S \mapsto S'$ is a bijection from the set of valid $1$-subgraphs of $G_{\bipart{m}{2}}$ to the set of valid $1$-subgraphs of $G_{\bipart{m'}{2}}$ where the vertex labelled $1$ has no incident edge.
\end{lemma}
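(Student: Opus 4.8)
The plan is to reduce the statement to a comparison of the two MVP parking processes for the parking functions $p := \SubtoPF(S)$ and $p' := \SubtoPF(S')$. First I would note that the relabelling map $S \mapsto S'$ is, by construction, a bijection from $\Sub{\bipart{m}{2}}$ onto the set of $1$-subgraphs of $G_{\bipart{m'}{2}}$ in which the vertex $1$ is isolated: its inverse simply decreases every vertex label by one and deletes the (necessarily isolated) vertex $1$. This map makes no reference to validity, so the entire content of the lemma is the equivalence $S \in \Valid{\bipart{m}{2}} \Longleftrightarrow S' \in \Valid{\bipart{m'}{2}}$, which I would establish by tracking the two parking processes.

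Using Definition~\ref{def:SubtoPF} together with $\bipart{m}{2}_i = i+2$ for $i \le m$, $\bipart{m}{2}_{m+1}=1$ and $\bipart{m}{2}_{m+2}=2$, one computes that in $p$ the cars $3, \ldots, m+2$ prefer spots $1, \ldots, m$ respectively, car $1$ prefers spot $a$ (if $S$ contains the edge $(a,m+1)$) or spot $m+1$ (otherwise), and car $2$ prefers spot $b$ (if $(b,m+2)\in S$) or spot $m+2$. The analogous computation for $p'$ shows that cars $4, \ldots, m+3$ prefer spots $2, \ldots, m+1$, car $3$ prefers spot $1$, car $1$ prefers $a+1$ or $m+2$, and car $2$ prefers $b+1$ or $m+3$. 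The crucial consequence of the vertex $1$ being isolated in $S'$ is that in $p'$ the spot $1$ is preferred by car $3$ and by no other car.

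The key step is a decoupling observation for $p'$. Every car other than car $3$ has preference at least $2$, and in the MVP process a car never comes to rest strictly to the left of its preferred spot (a bumped car only ever re-parks further to the right of where it was bumped). Hence spot $1$ is free when car $3$ arrives, car $3$ parks there and is never bumped, and its presence never influences any other car. Consequently the MVP process of $p'$ restricted to the cars $\{1,2,4,\ldots,m+3\}$ and the spots $\{2,\ldots,m+3\}$ is entirely independent of car $3$. I would then apply the spot-shift $s \mapsto s-1$ together with the order-preserving car relabelling that fixes $1,2$ and sends $c \mapsto c-1$ for $c \ge 4$. A direct bookkeeping check of the preferences shows that this transported process coincides exactly with the MVP process of $p$ (car $1 \mapsto a$ or $m+1$, car $2 \mapsto b$ or $m+2$, and car $c \mapsto$ spot $c-2$); since the MVP dynamics are invariant under a global shift of all spots and under order-preserving relabellings of the cars, the two processes are literally identical.

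It follows that the outcome of $p'$ is obtained from that of $p$ by placing car $3$ in spot $1$ and transporting the outcome on the remaining spots by the inverse shift/relabelling. Comparing with the explicit descriptions of $\bipart{m}{2}$ and $\bipart{m'}{2}$ then gives $\OMVP{m+3}{p'} = \bipart{m'}{2}$ if and only if $\OMVP{m+2}{p} = \bipart{m}{2}$, which is precisely the validity equivalence; combined with the bijectivity from the first paragraph this proves the lemma. I expect the main obstacle to be making the decoupling fully rigorous — in particular justifying that no bumped car can ever drift into spot $1$ — and carrying out the preference bookkeeping carefully enough that the transported $p'$-process is seen to agree with the $p$-process exactly, including the matching of the relative arrival orders of the cars.
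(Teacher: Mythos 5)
Your proposal is correct and takes essentially the same approach as the paper: both arguments hinge on the observation that in $p' = \SubtoPF(S')$ only car $3$ prefers spot $1$, so it parks there, is never bumped, never influences any other car, and the process on the remaining cars and spots coincides with the MVP process for $p = \SubtoPF(S)$ after shifting spots by one, while the insertion/deletion of the isolated vertex gives the underlying bijection. The only difference is presentational: you prove the validity equivalence in one shot with the decoupling made explicit (cars never come to rest left of their preference), whereas the paper argues the two implications separately and more tersely.
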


\begin{example}
Consider the $1$-subgraph $S \in \Sub{\bipart{m}{2}}$ with $m=3$ as in Figure~\ref{fig:insert_valid}, Graph~(A). Using the map $\SubtoPF(S)$, the corresponding parking function is $p=(2,1,1,2,3)$.
By running the MVP parking process we get $\OMVP{m+2}{p} = 34512 = \bipart{m}{2}$, so $S$ is valid. Then increasing vertex labels in $S$ by one and inserting a new isolated vertex labelled $1$, we get a new $1$-subgraph $S' \in \Sub{\bipart{m'}{2}}$ with $m'=m+1=4$ as in Figure~\ref{fig:insert_valid}, Graph~(B). For $S'$, the corresponding parking function is $p'=\SubtoPF(S')=(3,2,1,2,3,4)$. Then by running the MVP parking process we get $\OMVP{m'+2}{p'} = 345612 = \bipart{m'}{2} $, so $S'$ is also valid, as desired.

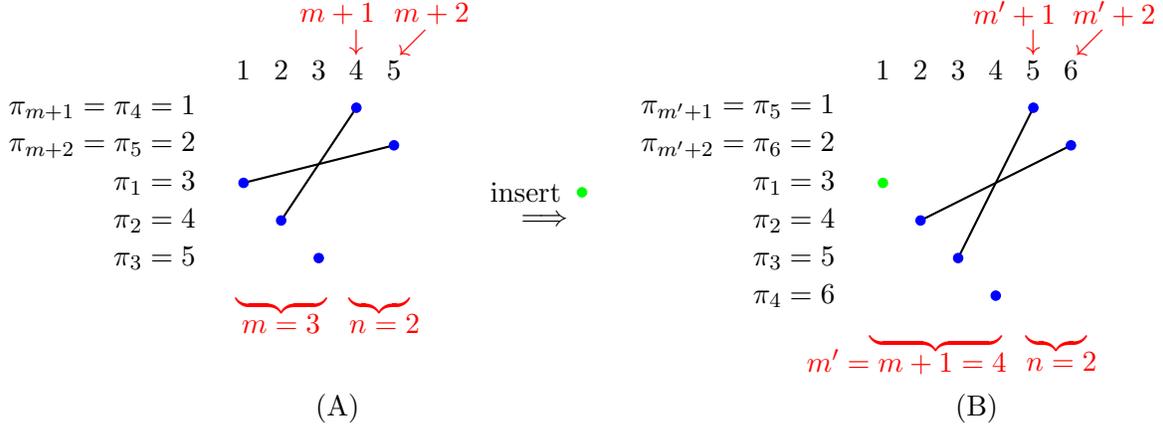
\begin{figure}[ht]
\centering
\begin{tikzpicture}[scale=0.25]
	\draw[thick] (2,-5)--(10,-3);
	\draw[thick] (4,-7)--(8,-1);
    \tdot{2}{-5}{blue}
    \tdot{4}{-7}{blue}
    \tdot{6}{-9}{blue}
    \tdot{8}{-1}{blue}
    \tdot{10}{-3}{blue}
    \node at (2,1) {$1$};
    \node at (4,1) {$2$};
    \node at (6,1) {$3$};
    \node[red] at (7,4) {$m+1$};
    \node[red] at (8,2.5) {$\downarrow$};
    \node at (8,1) {$4$};
    \node[red] at (12,4) {$m+2$};
    \node[red] at (11,2.5) {$\swarrow$};
    \node at (10,1) {$5$};
    
    \node [left] at (0,-1) {$\pi_{m+1}=\pi_{4} = 1$};
    \node [left] at (0,-3) {$\pi_{m+2}=\pi_{5} = 2$};
    \node [left] at (0,-5) {$\pi_{1} = 3$};
    \node [left] at (0,-7) {$\pi_{2} = 4$};
    \node [left] at (0,-9) {$\pi_{3} = 5$};
    \node[rotate = 0, red] at (4, -11.5) {$\underbrace{\hspace{1.2cm}}$};
    \node[red] at (4,-12.5) {$m=3$};
    \node[rotate = 0, red] at (9.2, -11.5) {$\underbrace{\hspace{0.8cm}}$};
    \node[red] at (9.5,-12.5) {$n=2$};
    \node at (7,-17) {(A)};    
    \node at (18,-7) {$\Longrightarrow$};
    \node at (17,-5.5) {\text{insert}};
    \tdot{20}{-5.5}{green}
    
	\begin{scope}[shift={(34,0)}]
	\draw[thick] (4,-7)--(12,-3);
	\draw[thick] (6,-9)--(10,-1);
    \tdot{2}{-5}{green}
    \tdot{4}{-7}{blue}
    \tdot{6}{-9}{blue}
    \tdot{8}{-11}{blue}
    \tdot{10}{-1}{blue}
    \tdot{12}{-3}{blue}
    \node at (2,1) {$1$};
    \node at (4,1) {$2$};
    \node at (6,1) {$3$};
    \node at (8,1) {$4$};
    \node[red] at (9,4) {$m'+1$};
    \node[red] at (10,2.5) {$\downarrow$};
    \node at (10,1) {$5$};
    \node[red] at (14.3,4) {$m'+2$};
    \node[red] at (13,2.5) {$\swarrow$};
    \node at (12,1) {$6$};
    
    \node [left] at (0,-1) {$\pi_{m'+1}=\pi_{5} = 1$};
    \node [left] at (0,-3) {$\pi_{m'+2}=\pi_{6} = 2$};
    \node [left] at (0,-5) {$\pi_{1} = 3$};
    \node [left] at (0,-7) {$\pi_{2} = 4$};
    \node [left] at (0,-9) {$\pi_{3} = 5$};
    \node [left] at (0,-11) {$\pi_{4} = 6$};
    \node[rotate = 0, red] at (4.8, -13.5) {$\underbrace{\hspace{1.75cm}}$};
    \node[red, xshift=-1em] at (4.8,-14.5) {$m'=m+1=4$};
    \node[rotate = 0, red] at (11.2, -13.5) {$\underbrace{\hspace{0.8cm}}$};
    \node[red] at (11.5,-14.5) {$n=2$};
    \node at (7,-17) {(B)};
	\end{scope}
\end{tikzpicture}
\caption{Illustrating the construction from Lemma~\ref{lem:addfirst_valid}: (A) a valid $1$-subgraph $S \in \Sub{\bipart{m}{2}}$ with $m=3$; (B) the corresponding valid $1$-subgraph $S' \in \Sub{\bipart{m'}{2}}$ with $m' = m+1 = 4$, obtained by inserting an isolated vertex in column $1$, row $3$, and shifting other vertices in $S$.\label{fig:insert_valid}}
\end{figure}
\end{example}

\begin{proof}[Proof of Lemma~\ref{lem:addfirst_valid}]
We first show that if $S$ is valid, so is $S'$. Let $p := \SubtoPF(S)$, resp.\ $p' := \SubtoPF(S')$ denote the parking function corresponding to $S$, resp.\ $S'$.
We consider the MVP parking process for $p'$. By construction, since the vertex $1$ is isolated in $S'$, and $\bipart{m'}{2}_1 = 3$ is isolated in $S'$, only the car $3$ prefers the spot $1$. This means that car $3$ will end up in spot $1$. Moreover, the remaining $m+2$ cars follow the same parking process as in $p$, with their spots increased by one. Since $S$ is valid, i.e.\ $\OMVP{m+2}{p} = \bipart{m}{2}$, this implies that $\OMVP{m'+2}{p'} = \bipart{m'}{2}$, i.e.\ $S'$ is valid, as desired.

We now claim that if $S' \in \Sub{\bipart{m'}{2}}$ is a valid $1$-subgraph where the vertex $1$ is isolated, then deleting that vertex and decreasing labels of remaining vertices by one  yields a $1$-subgraph $S \in \Sub{\bipart{m}{2}}$ which is also valid. 
Since the operations of inserting and deleting an isolated vertex (with suitable re-labelling of others) are clearly inverses of each other, this suffices to complete the proof. 
Suppose therefore that $S' \in \Sub{\bipart{m'}{2}}$ is a valid $1$-subgraph where the vertex $1$ is isolated. Let $S$ be the $1$-subgraph of $G_{\bipart{m}{2}}$ obtained by deleting this vertex, and decreasing labels of other vertices by $1$. As above, denote by $p$ and $p'$ the associated parking functions.
By construction, we have $p'_3 = 1$, and this is the only car preferring spot $1$. Therefore, in $p$, the parking process follows that of $p'$ where we have removed car $3$ and spot $1$ (with suitable re-labelling of remaining cars and spots). As such, since $S'$ is valid, it follows that $S$ is also valid, as desired.
\end{proof}

Lemma~\ref{lem:addfirst_valid} implies that the number of valid $1$-subgraphs of $G_{\bipart{m+1}{2}}$ where the vertex $1$ is isolated is equal to the number of valid $1$-subgraphs of $G_{\bipart{m}{2}}$. We now enumerate valid $1$-subgraphs of $G_{\bipart{m+1}{2}}$ where vertex $1$ has a single neighbour.

\begin{lemma}\label{lem:twoedges_1_j_valid}
For any $j\in[2,m]$, there are two $1$-subgraphs in $G_{\bipart{m}{2}}$ with one edge incident to $j$ and another incident to $1$. These are the subgraphs $S_1$ with edges $(1,m+1)$ and $(j, m+2)$, and $S_2$ with edges $(1, m+2)$ and $(j, m+1)$ (see Figure~\ref{fig:twoedges_twopatterns}). Then for any $j \in [2, m]$, exactly one of these two $1$-subgraphs is valid.
\end{lemma}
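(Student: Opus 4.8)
The plan is to compute explicitly the two parking functions $\SubtoPF(S_1)$ and $\SubtoPF(S_2)$, observe that they differ only by swapping the preferences of the first two cars, and then show that this swap exactly interchanges the final positions of those two cars while fixing everyone else. First I would apply Definition~\ref{def:SubtoPF}. Recall that for $\bipart{m}{2}$ we have $\pi_{m+1}=1$, $\pi_{m+2}=2$, and $\pi_i = i+2$ for $i \in [m]$. Reading off the unique left-arcs of $S_1$ and $S_2$ gives
\[
\SubtoPF(S_1) = (1, j, 1, 2, \ldots, m), \qquad \SubtoPF(S_2) = (j, 1, 1, 2, \ldots, m),
\]
so the two preference vectors agree except that the preferences $1$ and $j$ of cars $1$ and $2$ are swapped. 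In both cases car $i+2$ prefers spot $i$ for each $i \in [m]$.

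Next I would pin down the outcome as far as possible, independently of the $S_1$/$S_2$ distinction. In either process, for each $i \in [m]$ the car $i+2$ is the last car to prefer spot $i$ (the only other cars ever preferring spot $i$ are among cars $1, 2$, which arrive earlier). Hence when car $i+2$ arrives it seizes spot $i$, and since no later-arriving car prefers spot $i$ and bumped cars do not themselves bump, it remains there permanently. Thus cars $3, \ldots, m+2$ occupy spots $1, \ldots, m$ in the final configuration, forcing the two remaining cars $1$ and $2$ into spots $m+1$ and $m+2$ in some order. The outcome equals $\bipart{m}{2}$ precisely when car $1$ lands in spot $m+1$ and car $2$ in spot $m+2$; the only alternative places them in the opposite order, yielding a permutation differing from $\bipart{m}{2}$ in its last two entries. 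So validity reduces entirely to the relative order of cars $1$ and $2$ among the last two spots.

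The crux — and the step I expect to be the main obstacle — is to show that this order is \emph{reversed} between $S_1$ and $S_2$. The key observation is that the evolution of the \emph{set} of occupied spots under the MVP process depends only on the sequence of preferences and not on the car labels (as already noted in the introduction): processing a preference $p$ simply adjoins the spot $\min\{s \geq p : s \text{ unoccupied}\}$ to the occupied set. After the first two cars park, the occupied set is $\{1, j\}$ in both $S_1$ and $S_2$, and the remaining preferences $1, 2, \ldots, m$ coincide; hence the occupied set, and therefore the unordered pair of positions of the two floating cars $1$ and $2$, evolves identically in the two processes. It then remains to track the labels, which is the delicate point. I would prove by induction on the arriving cars the invariant that, at every step, the car occupying a given floating position in the $S_1$ process carries the opposite label to the one occupying that same position in the $S_2$ process. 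This holds initially (cars $1$ and $2$ sit at spots $1$ and $j$ in opposite orders) and is preserved by each bump, since the identity of the bumped car is determined by its position alone and its destination is common to both processes.

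Applying this invariant at the very end, when the two floating positions are $\{m+1, m+2\}$, shows that cars $1$ and $2$ occupy these two spots in opposite orders in $S_1$ and $S_2$. Exactly one of the two orders is the order required by $\bipart{m}{2}$ (car $1$ in spot $m+1$, car $2$ in spot $m+2$), so exactly one of $S_1, S_2$ is valid, as claimed. The main obstacle is really the bookkeeping of the label-swapping invariant: the occupied-set argument alone controls only positions, and one must argue carefully that the two physical cars $1$ and $2$ trace out the same unordered pair of trajectories with labels consistently transposed.
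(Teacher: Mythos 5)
Your proposal is correct, but it proves the lemma by a genuinely different route than the paper. The paper splits the statement into Lemmas~\ref{lem:twoedges_noncrossing} and \ref{lem:twoedges_crossing}: it explicitly runs the MVP parking process on $\SubtoPF(S_1) = (1,j,1,2,\cdots,m)$ and $\SubtoPF(S_2) = (j,1,1,2,\cdots,m)$ step by step (displayed in tables), observes that after car $j$ parks the partial outcome ends in $12$ for $S_1$ and in $21$ for $S_2$, and that each of the $m+2-j$ subsequent cars flips the relative order of cars $1$ and $2$; this yields the sharper conclusion that $S_1$ is valid if and only if $m+j$ is even, and $S_2$ if and only if $m+j$ is odd. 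You instead couple the two processes: you use the label-independence of the occupied-spot evolution (so both processes have the same occupied set at every step, and hence the same unordered pair of ``floating'' positions for cars $1$ and $2$), note that cars $3,\cdots,m+2$ are pinned to spots $1,\cdots,m$ since each is the last car to prefer its spot and bumpings do not propagate, and then propagate the invariant that the labels of the two floating cars are transposed between the $S_1$- and $S_2$-processes -- which holds initially and is preserved at each bump because the bumped car is identified by its position and its destination is determined by the common occupied set. Applying the invariant at the end gives opposite final orders for cars $1$ and $2$, hence exactly one valid subgraph. Your argument is less computational and yields the disjunction directly by symmetry; what it does not give is the parity criterion identifying \emph{which} of $S_1, S_2$ is valid, which the paper both remarks upon after stating the lemma and reuses as the template for Lemma~\ref{lem:twoedges_1st_m+1_m+2} (two edges incident to vertex $1$, valid iff $m$ odd). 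For the enumeration in Theorem~\ref{thm:enum_m_2} itself, only ``exactly one per $j$'' is needed, so your proof suffices for that application.
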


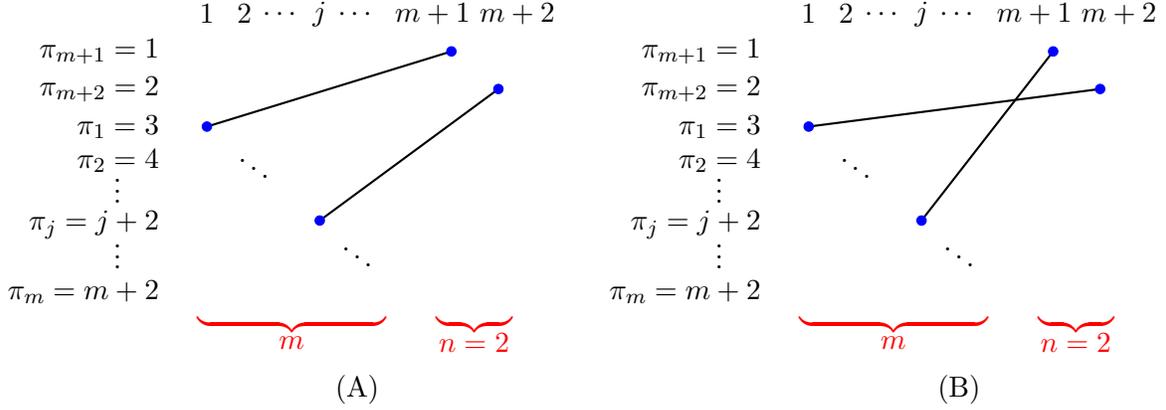
\begin{figure}[ht]
\centering
\begin{tikzpicture}[scale=0.25]
	\draw[thick] (2,-5)--(15,-1);
	\draw[thick] (8,-10)--(17.5,-3);
    \tdot{2}{-5}{blue}
    \node at (4.5,-6.8) {$\ddots$};
    \tdot{8}{-10}{blue}
    \node at (10, -11.5) {$\ddots$};
    \tdot{15}{-1}{blue}
    \tdot{17.5}{-3}{blue}
    \node at (2,1) {$1$};
    \node at (4,1) {$2$};
    \node at (6,1) {$\cdots$};
    \node at (8,1) {$j$};
    \node at (10,1) {$\cdots$};
    \node at (14,1) {$m+1$};
    \node at (18.5,1) {$m+2$};
    
    \node [left] at (0,-1) {$\pi_{m+1} = 1$};
    \node [left] at (0,-3) {$\pi_{m+2} = 2$};
    \node [left] at (0,-5) {$\pi_{1} = 3$};
    \node [left] at (0,-6.8) {$\pi_{2} = 4$};
    \node [left] at (-2,-8) {$\vdots$};
    \node [left] at (0,-10.2) {$\pi_{j} = j+2$};
    \node [left] at (-2,-11.5) {$\vdots$};
    \node [left] at (0,-13.8) {$\pi_{m} = m+2$};
    \node[rotate = 0, red] at (6.5, -15.5) {$\underbrace{\hspace{2.5cm}}$};
    \node[red] at (6.5,-16.5) {$m$};
    \node[rotate = 0, red] at (16.2, -15.5) {$\underbrace{\hspace{1cm}}$};
    \node[red] at (16.2,-16.5) {$n=2$};
    \node at (10,-19) {(A)};
    
	\begin{scope}[shift={(32,0)}]
	\draw[thick] (2,-5)--(17.5,-3);
	\draw[thick] (8,-10)--(15,-1);
    \tdot{2}{-5}{blue}
    \node at (4.5,-6.8) {$\ddots$};
    \tdot{8}{-10}{blue}
    \node at (10, -11.5) {$\ddots$};
    \tdot{15}{-1}{blue}
    \tdot{17.5}{-3}{blue}
    \node at (2,1) {$1$};
    \node at (4,1) {$2$};
    \node at (6,1) {$\cdots$};
    \node at (8,1) {$j$};
    \node at (10,1) {$\cdots$};
    \node at (14,1) {$m+1$};
    \node at (18.5,1) {$m+2$};
    
    \node [left] at (0,-1) {$\pi_{m+1} = 1$};
    \node [left] at (0,-3) {$\pi_{m+2} = 2$};
    \node [left] at (0,-5) {$\pi_{1} = 3$};
    \node [left] at (0,-6.8) {$\pi_{2} = 4$};
    \node [left] at (-2,-8) {$\vdots$};
    \node [left] at (0,-10.2) {$\pi_{j} = j+2$};
    \node [left] at (-2,-11.5) {$\vdots$};
    \node [left] at (0,-13.8) {$\pi_{m} = m+2$};
    \node[rotate = 0, red] at (6.5, -15.5) {$\underbrace{\hspace{2.5cm}}$};
    \node[red] at (6.5,-16.5) {$m$};
    \node[rotate = 0, red] at (16.2, -15.5) {$\underbrace{\hspace{1cm}}$};
    \node[red] at (16.2,-16.5) {$n=2$};
    \node at (10,-19) {(B)};
	\end{scope}
\end{tikzpicture}
\caption{The two $1$-subgraphs in $\Sub{\bipart{m}{2}}$ with one edge incident to $1$ and another incident to $j \in [2,m]$: (A) the subgraph $S_1$ with non-crossing edges $(1,m+1)$ and $(j,m+2)$; (B) the subgraph $S_2$ with crossing edges $(1,m+2)$ and $(j,m+1)$.\label{fig:twoedges_twopatterns}}
\end{figure}

We will see that which subgraph is valid in fact depends on the parity of $m+j$. 
Lemma~\ref{lem:twoedges_1_j_valid} implies that there are exactly $m-1$ valid $1$-subgraphs of $G_{\bipart{m}{2}}$ with two edges, exactly one of which is incident to $1$. To show the lemma, we consider the two cases from Lemmas~\ref{lem:twoedges_noncrossing} and \ref{lem:twoedges_crossing}.

\begin{lemma}\label{lem:twoedges_noncrossing}
For $j\in[2,m]$, let $S_1$ be the $1$-subgraph of $G_{\bipart{m}{2}}$ with the two non-crossing edges $(1,m+1)$ and $(j, m+2)$ (see Figure~\ref{fig:twoedges_twopatterns}, Graph~(A)). Then $S_1$ is valid if and only if $m+j$ is even.
\end{lemma}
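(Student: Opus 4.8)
The plan is to compute the parking function $p := \SubtoPF(S_1)$ explicitly and then run the MVP parking process on it, tracking the trajectories of the two ``mobile'' cars. By Definition~\ref{def:SubtoPF} and the structure of $G_{\bipart{m}{2}} = K_{m,2}$ (whose only right-endpoints are $m+1$ and $m+2$), the edge $(1,m+1)$ forces $p_{\pi_{m+1}} = p_1 = 1$, the edge $(j,m+2)$ forces $p_{\pi_{m+2}} = p_2 = j$, and every vertex $i \in [m]$ is isolated, giving $p_{\pi_i} = p_{i+2} = i$. Hence $p = (1, j, 1, 2, 3, \ldots, m)$. In this preference, each ``diagonal'' car $k$ with $k \in [3, m+2]$ prefers spot $k-2$, car $1$ also prefers spot $1$, and car $2$ prefers spot $j$.

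First I would establish a structural invariant by induction on the index $k$ of the last car to have parked, for $j+1 \le k \le m+2$: after car $k$ parks, the diagonal cars $3, \ldots, k$ occupy spots $1, \ldots, k-2$ respectively, while cars $1$ and $2$ occupy the two rightmost spots $k-1$ and $k$, with car $1$ in spot $k$ and car $2$ in spot $k-1$ when $k-j$ is odd, and car $1$ in spot $k-1$ and car $2$ in spot $k$ when $k-j$ is even. The base case $k = j+1$ requires tracing the initial cascade: cars $3, 4, \ldots$ successively bump car $1$ one spot to the right (car $k$ preferring the spot $k-2$ which currently holds car $1$), until car $j+1$ bumps car $1$ out of spot $j-1$. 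At that moment spot $j$ is occupied by car $2$, so car $1$ ``jumps over'' it and lands in the free spot $j+1$. (When $j=2$ this cascade is empty and the base case is immediate, since car $3$ directly bumps car $1$ past car $2$.)

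For the inductive step I would show that the arrival of car $k+1$, which prefers spot $(k+1)-2 = k-1$, bumps precisely whichever of cars $1, 2$ currently sits in spot $k-1$ (the ``trailing'' mobile car), sending it past the occupied spot $k$ into the free spot $k+1$. This reinstates the invariant for $k+1$ with the parity of $k-j$ flipped, i.e.\ the two cars leapfrog one another while the diagonal cars $3, \ldots, k+1$ fill spots $1, \ldots, k-1$. The only points requiring care are verifying at each step that the spot the bumped car lands in is genuinely free --- which follows since the diagonal cars occupy exactly $1, \ldots, k-1$ and the other mobile car occupies spot $k$ --- and confirming the diagonal cars settle into the claimed spots.

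Finally, evaluating the invariant at $k = m+2$ reads off the outcome: cars $3, \ldots, m+2$ fill spots $1, \ldots, m$ (matching $\pi_i = i+2$), and cars $1, 2$ occupy spots $m+1, m+2$. Car $1$ lands in spot $m+1$ and car $2$ in spot $m+2$ --- which is exactly the requirement $\OMVP{m+2}{p} = \bipart{m}{2}$, i.e.\ validity of $S_1$ --- precisely when $(m+2)-j$ is even, equivalently when $m+j$ is even; otherwise the two cars are swapped and $S_1$ is invalid. I expect the main obstacle to be the careful bookkeeping of the initial cascade and the ``jump over car $2$'' step, especially the boundary case $j=2$, since this is where the parity offset governing the final leapfrog is first created.
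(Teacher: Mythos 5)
Your proposal is correct and follows essentially the same route as the paper's proof: both compute $p = \SubtoPF(S_1) = (1,j,1,2,\ldots,m)$, trace the initial cascade in which the diagonal cars push car $1$ up to spot $j-1$, and then observe that each car arriving after car $j$ bumps whichever of cars $1,2$ is trailing, so that the two cars leapfrog and their final order is determined by the parity of $m+2-j$. Your explicit invariant (with the parity of $k-j$) is just a formalisation, by induction, of the paper's ``flip'' argument, including the same treatment of the boundary case $j=2$.
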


\begin{proof}

As in Figure~\ref{fig:twoedges_twopatterns}, Graph~(A), there are only two edges $(1,m+1)$ and $(j,m+2)$, so car $1=\pi_{m+1}$ and car $3=\pi_{1}$ prefer the same spot $1$, i.e., $p_1 = p_3 = 1$, and car $2=\pi_{m+2}$ and car $j+2=\pi_{j}$ prefer the same spot $j$, i.e., $p_2 = p_{j+2} = j$. Since no vertices from spot $1$ to spot $m$ have left edges, the preferences of the corresponding cars are just those spots. That is, we have $p_{\pi_{k}}=p_{k+2}=k$ for any $k \in \{1, \cdots, m\}$.

As such, using the map $\SubtoPF$ we get the parking function $p=\SubtoPF(S) = (1,j,1,2,\cdots,m)$. We now wish to determine under which conditions $S$ is valid, i.e.\ under which conditions $\OMVP{m+2}{p} = \bipart{m}{2}$. For this, we run the MVP parking process on $p$, which is illustrated on Table~\ref{tab:twoedges_noncrossing} below. 

\begin{table}[htbp]
\centering
\begin{tabular}{r|c|c|c|c|c|c|c|c|c|c|c|c|c|c}
\hline
\text{spot:} & 1 & 2 & 3 & 4 & 5 & $\cdots$ & j-2 & j-1 & j & j+1 & j+2 & $\cdots$ & m+1 & m+2 \\
\hline
$p_{\pi_{m+1}}=p_1=1:$ & 1 & \_ & \_ & \_ & \_ & $\cdots$ & \_ & \_ & \_ & \_ & \_ & $\cdots$ & \_ & \_ \\
$p_{\pi_{m+2}}=p_2=j:$ & 1 & \_ & \_ & \_ & \_ & $\cdots$ & \_ & \_ & 2 & \_ & \_ & $\cdots$ & \_ & \_ \\
$p_{\pi_{1}}=p_3=1:$ & 3 & 1 & \_ & \_ & \_ & $\cdots$ & \_ & \_ & 2 & \_ & \_ & $\cdots$ & \_ & \_ \\
$p_{\pi_{2}}=p_4=2:$ & 3 & 4 & 1 & \_ & \_ & $\cdots$ & \_ & \_ & 2 & \_ & \_ & $\cdots$ & \_ & \_ \\
$p_{\pi_{3}}=p_5=3:$ &  3 & 4 & 5 & 1 & \_ & $\cdots$ & \_ & \_ & 2 & \_ & \_ & $\cdots$ & \_ & \_ \\
 & & & & & & & $\vdots$ & & & & & & &  \\
$p_{\pi_{j-2}}=p_j=j-2:$ & 3 & 4 & 5 & 6 & 7 & $\cdots$ & j & \textcolor{blue}{1} & \textcolor{blue}{2} & \_ & \_ & $\cdots$ & \_ & \_ \\
$p_{\pi_{j-1}}=p_{j+1}=j-1:$ & 3 & 4 & 5 & 6 & 7 & $\cdots$ & j & j+1 & \textcolor{red}{2} & \textcolor{red}{1} & \_ & $\cdots$ & \_ & \_ \\
$p_{\pi_{j}}=p_{j+2}=j:$ & 3 & 4 & 5 & 6 & 7 & $\cdots$ & j & j+1 & j+2 & \textcolor{blue}{1} & \textcolor{blue}{2} & $\cdots$ & \_ & \_  \\
 & & & & &  & & $\vdots$ & &  &  &  &  &  &
\end{tabular}
\caption{The MVP parking process with $p=(1,j,1,2,3,4,\cdots,m)$.
\label{tab:twoedges_noncrossing}}
\end{table}



Since $p_1=1$, and $p_2 = j$, cars $1$ and $2$ initially park in spots $1$ and $j$ respectively. Then recall that for $k \geq 3$, we have $p_k = k-2$. As such, car $3$ parks in $1$ and bumps car $1$ into spot $2$. This is followed by car $4$ bumping car $1$ into spot $3$, car $5$ bumping it into spot $4$, and so on. Finally, after car $j$ parks, we see that exactly the first $j$ spots are all occupied, with the ``partial'' outcome $345 \cdots j12$.

Then car $j+1$ arrives, preferring spot $j-1$, so car $1$ will be bumped from spot $j-1$ to the first available spot $j+1$, yielding the updated partial outcome $345 \cdots j(j+1)21$. This is followed by car $j+2$ bumping car $2$ from spot $j$ into the first unoccupied spot $j+2$, yielding the partial outcome $345 \cdots j(j+1)(j+2)12$. We therefore see that each car $k > j$ will bump either car $1$ or car $2$, and flip their position in the partial outcome.

Since after car $j$ had parked, cars $1$ and $2$ were parked in that order (i.e.\ the ``correct'' order), this implies that we will reach the correct final outcome $\bipart{m}{2} = 345 \cdots (m+2)12$ (i.e.\ $S$ will be valid) if, and only if, the number of cars parking after car $j$ (i.e.\ the number of flips in the partial outcomes) is even. Since there are $m+2-j$ cars arriving after car $j$, this means that $S$ is valid if, and only if, $m+2-j$ is even, which is equivalent to $m+j$ being even, as desired.
\end{proof}

\begin{lemma}\label{lem:twoedges_crossing}
For $j\in[2,m]$, let $S_2$ be the $1$-subgraph of $G_{\bipart{m}{2}}$ with the two crossing edges $(1,m+2)$ and $(j, m+1)$ (see Figure~\ref{fig:twoedges_twopatterns}, Graph~(B)). Then $S_2$ is valid if and only if $m+j$ is odd.
\end{lemma}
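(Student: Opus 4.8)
The plan is to mirror the proof of Lemma~\ref{lem:twoedges_noncrossing}, but I would first try to avoid repeating the full trace by exploiting a label-swap symmetry. First I would compute $p := \SubtoPF(S_2)$. Reading off the two crossing edges $(1,m+2)$ and $(j,m+1)$, and using that every spot in $\{1,\dots,m\}$ not receiving a left-arc is preferred only by its ``own'' car, I obtain $p = (j,1,1,2,3,\dots,m)$. The crucial observation is that this differs from the non-crossing parking function $p^{(1)} = (1,j,1,2,\dots,m)$ of Lemma~\ref{lem:twoedges_noncrossing} only by interchanging the preferences of cars $1$ and $2$.

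Next I would argue that this interchange conjugates the whole MVP process by the transposition $(1\,2)$. Cars $1$ and $2$ are the first two to enter, and they prefer the distinct spots $1$ and $j$ (distinct since $j \geq 2$), so they park without colliding; thereafter they are purely passive, moving only when bumped by a later car. Since bumping decisions depend only on which spots are occupied and not on the labels of the occupants, and since cars $3,\dots,m+2$ have identical preferences in $p$ and $p^{(1)}$, the occupancy pattern evolves identically in the two processes, with the labels $1$ and $2$ swapped at every step. Hence $\OMVP{m+2}{p} = (1\,2)\cdot\OMVP{m+2}{p^{(1)}}$, where $(1\,2)$ acts by swapping the values $1$ and $2$ in the outcome permutation.

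It then remains to feed in the precise outcome of $p^{(1)}$. The proof of Lemma~\ref{lem:twoedges_noncrossing} establishes more than mere validity: tracing the process (Table~\ref{tab:twoedges_noncrossing}), once car $j$ has parked, cars $1$ and $2$ occupy adjacent spots in the ``correct'' order, and each of the $m+2-j$ subsequent cars flips their relative order. Thus the outcome of $p^{(1)}$ is exactly $\bipart{m}{2}$ when $m+j$ is even, and the tail-swapped permutation $3\,4\cdots(m+2)\,2\,1$ when $m+j$ is odd. Applying $(1\,2)$ to each, the outcome of $p$ is $3\,4\cdots(m+2)\,2\,1$ when $m+j$ is even and $\bipart{m}{2}$ when $m+j$ is odd. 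Therefore $S_2$ is valid precisely when $m+j$ is odd, as claimed.

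The hard part will be making the label-swap symmetry fully rigorous: in particular, justifying that cars $1$ and $2$ never interact in a label-dependent way, so that the identity $\OMVP{m+2}{p} = (1\,2)\cdot\OMVP{m+2}{p^{(1)}}$ holds at every intermediate step and not merely at the end. If this proves delicate, I would fall back on a direct trace entirely analogous to Table~\ref{tab:twoedges_noncrossing}: cars $3,\dots,j$ push car $2$ from spot $1$ up to spot $j-1$ while car $1$ rests at spot $j$; then cars $j+1,\dots,m+2$ drive the adjacent pair $\{1,2\}$ rightwards, each car bumping exactly one of them past the other (this uses that $p_{j+s}=j+s-2$ coincides with the lower occupied spot of the pair, the next free spot being one step beyond). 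Counting the resulting $m+2-j$ order-flips yields the same parity condition. The boundary cases $j=2$ (empty first phase) and $j=m$ (empty second phase) would need a quick separate check, but I expect both to fold cleanly into the general count.
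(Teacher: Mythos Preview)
Your proposal is correct, and your primary route---the label-swap symmetry---is genuinely different from the paper's approach. The paper simply reruns the MVP trace on $p=(j,1,1,2,\dots,m)$ from scratch (Table~\ref{tab:twoedges_crossing}), observes that after car $j$ has parked the partial outcome is $34\cdots j\,21$ (i.e.\ cars $1$ and $2$ are in the \emph{wrong} order, unlike the non-crossing case), and then counts the $m+2-j$ subsequent flips. Your argument is more economical: since cars $1$ and $2$ park first in distinct spots and are thereafter purely passive, the evolution of the occupied set is identical in $p$ and $p^{(1)}$, so the two outcomes differ exactly by the transposition $(1\,2)$ acting on values. You then read off both possible outcomes of $p^{(1)}$ from the proof of Lemma~\ref{lem:twoedges_noncrossing} and apply the swap. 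This buys you a clean reduction and avoids duplicating the trace; the paper's direct approach is more self-contained but essentially repeats the previous lemma's work. Your fallback plan is precisely what the paper does. One small slip: for $j=m$ the ``second phase'' (cars $j+1,\dots,m+2$) still contains two cars, not zero, so it is not empty---but this does not affect anything, since the count $m+2-j$ handles it uniformly.
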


\begin{proof}
As in Figure~\ref{fig:twoedges_twopatterns}, Graph~(B), there are only two edges $(1,m+2)$ and $(j,m+1)$, so car $2=\pi_{m+2}$ and car $3=\pi_{1}$ prefer the same spot $1$, i.e., $p_2 = p_3 = 1$, and car $1=\pi_{m+1}$ and car $j+2=\pi_{j}$ prefer the same spot $j$, i.e., $p_1 = p_{j+2} = j$. Since no vertices from spot $1$ to spot $m$ have left edges, the preferences of the corresponding cars are just those spots, i.e., $p_{\pi_{k}}=p_{k+2}=k$ for all $k \in \{1, \cdots, m\}$. 

As such, using the map $\SubtoPF$ we get the parking function $p=\SubtoPF(S) = (j,1,1,2,3,\cdots,m)$. We now wish to determine under which conditions $S$ is valid, i.e.\ under which conditions $\OMVP{m+2}{p} = \bipart{m}{2}$. From here, we proceed as in the proof of Lemma~\ref{lem:twoedges_noncrossing}. Because the proof is essentially analogous, we will be much briefer in our arguments here. We first run the MVP parking process on $p$, which is illustrated on Table~\ref{tab:twoedges_crossing} below.

\begin{table}[htbp]
\centering
\begin{tabular}{r|c|c|c|c|c|c|c|c|c|c|c|c|c|c}
\hline
\text{spot:} & 1 & 2 & 3 & 4 & 5 & $\cdots$ & j-2 & j-1 & j & j+1 & j+2 & $\cdots$ & m+1 & m+2 \\
\hline
$p_{\pi_{m+1}}=p_1=j:$ & \_ & \_ & \_ & \_ & \_ & $\cdots$ & \_ & \_ & 1 & \_ & \_ & $\cdots$ & \_ & \_ \\
$p_{\pi_{m+2}}=p_2=1:$ & 2 & \_ & \_ & \_ & \_ & $\cdots$ & \_ & \_ & 1 & \_ & \_ & $\cdots$ & \_ & \_ \\
$p_{\pi_{1}}=p_3=1:$ & 3 & 2 & \_ & \_ & \_ & $\cdots$ & \_ & \_ & 1 & \_ & \_ & $\cdots$ & \_ & \_ \\
$p_{\pi_{2}}=p_4=2:$ & 3 & 4 & 2 & \_ & \_ & $\cdots$ & \_ & \_ & 1 & \_ & \_ & $\cdots$ & \_ & \_ \\
$p_{\pi_{3}}=p_5=3:$ &  3 & 4 & 5 & 2 & \_ & $\cdots$ & \_ & \_ & 1 & \_ & \_ & $\cdots$ & \_ & \_ \\
 & & & & & & & $\vdots$ & & & & & & &  \\
$p_{\pi_{j-2}}=p_j=j-2:$ & 3 & 4 & 5 & 6 & 7 & $\cdots$ & j & \textcolor{red}{2} & \textcolor{red}{1} & \_ & \_ & $\cdots$ & \_ & \_ \\
$p_{\pi_{j-1}}=p_{j+1}=j-1:$ & 3 & 4 & 5 & 6 & 7 & $\cdots$ & j & j+1 & \textcolor{blue}{1} & \textcolor{blue}{2} & \_ & $\cdots$ & \_ & \_ \\
$p_{\pi_{j}}=p_{j+2}=j:$ & 3 & 4 & 5 & 6 & 7 & $\cdots$ & j & j+1 & j+2 & \textcolor{red}{2} & \textcolor{red}{1} & $\cdots$ & \_ & \_  \\
 &  &  &  &  &  &  & $\vdots$ & &  &  &  &  &  &
\end{tabular}
\caption{The MVP parking process for $p=(j,1,1,2,3,\cdots,m)$.
\label{tab:twoedges_crossing}}
\end{table}



We see that after the first $j$ cars have all parked, the first $j$ spots are all occupied. This time however, the partial outcome at this point is $345 \cdots j21$, i.e.\ the positions of cars $1$ and $2$ are reversed compared to the desired final outcome $\bipart{m}{2}$. As in the previous case, every car arriving subsequently will park just before cars $1$ and $2$, and flip their order. As such, the final outcome will have $1$, $2$ in that order if, and only if, an odd number of cars arrives after car $j$ (i.e.\ there are an odd number of flips). Since there are $m+2-j$ cars arriving after car $j$, this means that $S$ is valid if, and only if, $m+2-j$ is odd, which is equivalent to $m+j$ being odd, as desired.

\end{proof}

Lemmas~\ref{lem:twoedges_noncrossing} and \ref{lem:twoedges_crossing} imply Lemma~\ref{lem:twoedges_1_j_valid}. In particular, this shows that there are exactly $m-1$ valid $1$-subgraphs of $G_{\bipart{m}{2}}$ with two edges, exactly one of which is incident to the vertex $1$. We now characterise valid $1$-subgraphs with two edges which are both incident to $1$.

\begin{lemma}\label{lem:twoedges_1st_m+1_m+2}
For $m \geq 1$, let $S \in \Sub{\bipart{m}{2}}$ be the $1$-subgraph with edges $(1,m+1)$ and $(1,m+2)$ (see Figure~\ref{fig:twoedges_1st_m+1_m+2}). Then $S$ is valid if and only if $m$ is odd.
\end{lemma}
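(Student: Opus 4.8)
The plan is to follow the same strategy as Lemmas~\ref{lem:twoedges_noncrossing} and~\ref{lem:twoedges_crossing}: read off the parking function $p := \SubtoPF(S)$, run the MVP parking process explicitly, and track the relative order of the two cars that are repeatedly bumped. First I would determine $p$. Since both edges of $S$ emanate from the vertex $1$, that vertex carries no incident left-arc and neither do the vertices $2, \cdots, m$, so Definition~\ref{def:SubtoPF} gives $p_{\pi_i} = i$, i.e.\ $p_{i+2} = i$, for every $i \in [m]$. The two left-arcs $(1,m+1)$ and $(1,m+2)$ force $p_{\pi_{m+1}} = p_{\pi_{m+2}} = 1$, that is $p_1 = p_2 = 1$. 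Hence $p = (1, 1, 1, 2, 3, \cdots, m)$, and exactly the three cars $1$, $2$, $3$ prefer spot $1$.

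Next I would prove the following invariant by induction on $k$, for $3 \le k \le m+2$: immediately after car $k$ has parked, spot $i$ holds car $i+2$ for every $i \le k-2$, while the two spots $k-1$ and $k$ are occupied by cars $1$ and $2$ in some order, this order being reversed each time $k$ increases by one. The base case $k = 3$ is the direct computation that cars $1$, $2$, $3$ (all preferring spot $1$) leave cars $3, 1, 2$ in spots $1, 2, 3$, so the order of cars $1, 2$ in spots $2, 3$ is $(1,2)$. For the inductive step, car $k+1$ prefers spot $p_{k+1} = k-1$, which by hypothesis is occupied by one of cars $1, 2$; that car is bumped, finds spot $k$ blocked by the other special car, and parks in spot $k+1$. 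This installs car $k+1 = (k-1)+2$ in spot $k-1$ and moves cars $1, 2$ into spots $k, k+1$ in the opposite order, which is precisely the invariant advanced by one step.

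Finally I would count flips. Setting $k = m+2$, the invariant shows that $\OMVP{m+2}{p}$ equals $\bipart{m}{2}$ exactly when cars $1$ and $2$ occupy spots $m+1, m+2$ in the order $(1,2)$. That order is $(1,2)$ after car $3$ parks and is flipped once by each of cars $4, 5, \cdots, m+2$, for a total of $m-1$ flips; hence the final order is $(1,2)$ if and only if $m-1$ is even, i.e.\ $m$ is odd, which is the claim. As in the two preceding lemmas, the only point requiring care is checking that a bumped special car always lands in the very next free spot (and not further to the right): this follows because the invariant keeps spots $1, \cdots, k-1$ filled while the second special car occupies spot $k$, so I expect no genuine obstacle beyond this routine bookkeeping.
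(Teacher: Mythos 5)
Your proof is correct and takes essentially the same approach as the paper: read off $p = \SubtoPF(S) = (1,1,1,2,3,\cdots,m)$, run the MVP process, and reduce validity to the parity of the number of times cars $1$ and $2$ swap their relative order. The only differences are cosmetic bookkeeping (the paper counts $m$ flips starting from the partial outcome $21$ after car $2$, while you count $m-1$ flips starting from the order $(1,2)$ after car $3$, which agree) and that your explicit inductive invariant makes rigorous what the paper presents via a table and an informal flip argument.
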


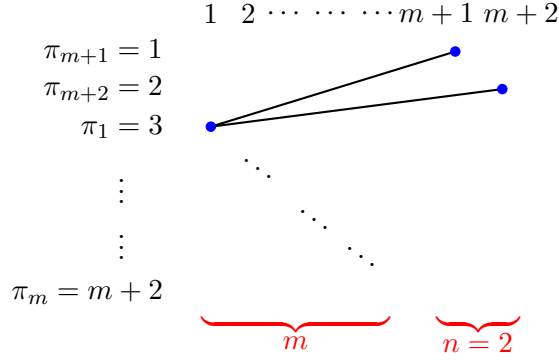
\begin{figure}[ht]
\centering
\begin{tikzpicture}[scale=0.25]
	\draw[thick] (2,-5)--(17.5,-3);
	\draw[thick] (2,-5)--(15,-1);
    \tdot{2}{-5}{blue}
    \node at (4.5,-6.8) {$\ddots$};
    \node at (7.5, -9.5) {$\ddots$};
    \node at (10, -11.5) {$\ddots$};
    \tdot{15}{-1}{blue}
    \tdot{17.5}{-3}{blue}
    \node at (2,1) {$1$};
    \node at (4,1) {$2$};
    \node at (6,1) {$\cdots$};
    \node at (8.5,1) {$\cdots$};
    \node at (11,1) {$\cdots$};
    \node at (14,1) {$m+1$};
    \node at (18.5,1) {$m+2$};
    
    \node [left] at (0,-1) {$\pi_{m+1} = 1$};
    \node [left] at (0,-3) {$\pi_{m+2} = 2$};
    \node [left] at (0,-5) {$\pi_{1} = 3$};
    \node [left] at (-2,-8) {$\vdots$};
    \node [left] at (-2,-11) {$\vdots$};
    \node [left] at (0,-13.8) {$\pi_{m} = m+2$};
    \node[rotate = 0, red] at (6.5, -15.5) {$\underbrace{\hspace{2.5cm}}$};
    \node[red] at (6.5,-16.5) {$m$};
    \node[rotate = 0, red] at (16.2, -15.5) {$\underbrace{\hspace{1.1cm}}$};
    \node[red] at (16.2,-16.5) {$n=2$};
\end{tikzpicture}
\caption{The $1$-subgraph $S \in \Sub{\bipart{m}{2}}$ with edges $(1,m+1)$ and $(1,m+2)$.\label{fig:twoedges_1st_m+1_m+2}}
\end{figure}

\begin{proof}
As in Figure~\ref{fig:twoedges_1st_m+1_m+2}, there are two edges $(1,m+1)$ and $(1,m+2)$, so car $1=\pi_{m+1}$ and car $2=\pi_{m+2}$ both prefer the spot $1$, i.e.\ $p_1=p_2=1$. Since no vertices from spot $1$ to spot $m$ have left edges, the preferences of the corresponding cars are just those spots, i.e., $p_{\pi_{k}}=p_{k+2}=k$ for $1 \leq k \leq m$. 

Hence, using the map $\SubtoPF$ we get the parking function $p=\SubtoPF(S)=(1,1,1,2,3,\cdots,m)$. We now wish to determine under which conditions $S$ is valid, i.e.\ under which conditions $\OMVP{m+2}{p} = \bipart{m}{2}$. The proof proceeds along analogous lines to those of Lemmas~\ref{lem:twoedges_noncrossing} and \ref{lem:twoedges_crossing}, by running the MVP parking process on $p$, which is illustrated on Table~\ref{tab:twoedgesto1} below.

\begin{table}[htbp]
\centering
\begin{tabular}{r|c|c|c|c|c|c|c|c|c|c}
\hline
\text{spot:} & 1 & 2 & 3 & 4 & 5 & 6 & $\cdots$ & $\cdots$ & m+1 & m+2 \\
\hline
$p_{\pi_{m+1}}=p_1=1:$ & 1 & \_ & \_ & \_ & \_ & \_ & $\cdots$ & $\cdots$ & \_ & \_ \\
$p_{\pi_{m+2}}=p_2=1:$ & 2 & 1 & \_ & \_ & \_ & \_ & $\cdots$ & $\cdots$ & \_ & \_ \\
$p_{\pi_{1}}=p_3=1:$ & 3 & \textcolor{blue}{1} & \textcolor{blue}{2} & \_ & \_ & \_ & $\cdots$ & $\cdots$ & \_ & \_ \\
$p_{\pi_{2}}=p_4=2:$ & 3 & 4 & \textcolor{red}{2} & \textcolor{red}{1} & \_ & \_ & $\cdots$ & $\cdots$ & \_ & \_  \\
$p_{\pi_{3}}=p_5=3:$ &  3 & 4 & 5 & \textcolor{blue}{1} & \textcolor{blue}{2} & \_ & $\cdots$ & $\cdots$ & \_ & \_ \\
$\vdots$ &  &  &  &  &  &  & $\vdots$ &  &
\end{tabular}
\caption{The MVP parking process with $p=(1,1,1,2,3,\cdots,m)$. \label{tab:twoedgesto1}}
\end{table}



We see that after the first two cars park we have the partial outcome $21$, and every subsequent car arriving will flip the order of cars $1$ and $2$. This implies that we will reach the correct final outcome $\bipart{m}{2}$ (i.e.\ $S$ will be valid) if, and only if, the number of cars parking after car $2$ is odd (i.e.\ there are an odd number of flips after that point). Since there are $m+2-2=m$ cars arriving after car $2$, this means that $S$ is valid if, and only if, $m$ is odd, as desired.
\end{proof}

We are now equipped with all the necessary ingredients to prove Theorem~\ref{thm:enum_m_2}.

\begin{proof}[Proof of Theorem~\ref{thm:enum_m_2}]

We proceed by induction on $m \geq 0$. For $m = 0$, we have $\pi = \bipart{m}{2} = 12$, which has no inversions. As such, the only valid $1$-subgraph is the empty graph with no edges, so $\vert \FibMVP{2}{12} \vert = 1 = 0 + 1 + \lfloor \frac{(0+1)^2}{2} \rfloor$, as desired. For clarity, we also explicitly consider the case $m=1$. In that case, we have $\pi = \bipart{m}{2} = 312$. There are four $1$-subgraphs of $G_{\pi}$, which are all valid by Theorem~\ref{thm:MVPPF_perm_patterns} (see Example~\ref{ex:valid}). Therefore $\vert \FibMVP{3}{312} \vert = 4 = 1+1 + \lfloor \frac{(1+1)^2}{2} \rfloor$, as desired.

Suppose now that the formula holds for $m-1$ for some $m \geq 1$. We wish to count valid $1$-subgraphs of $G_{\bipart{m}{2}}$. Lemma~\ref{lem:addfirst_valid} tells us that the number of these where $1$ is isolated is equal to $\vert \FibMVP{m+1}{\bipart{m-1}{2}} \vert$. It remains to count those where $1$ is not isolated. There are two possibilities: the first where the subgraph has a single edge (incident to $1$), and the second where it has two edges.

For the first case, any $1$-subgraph consisting of a single edge is valid (see Proposition~\ref{pro:lower_bound} and following remarks). There are therefore two valid $1$-subgraphs of $G_{\bipart{m}{2}}$ with a single edge which is incident to $1$, corresponding to the edges $(1, m+1)$ or $(1, m+2)$ (see Figure~\ref{fig:insert_twomore_valid}).

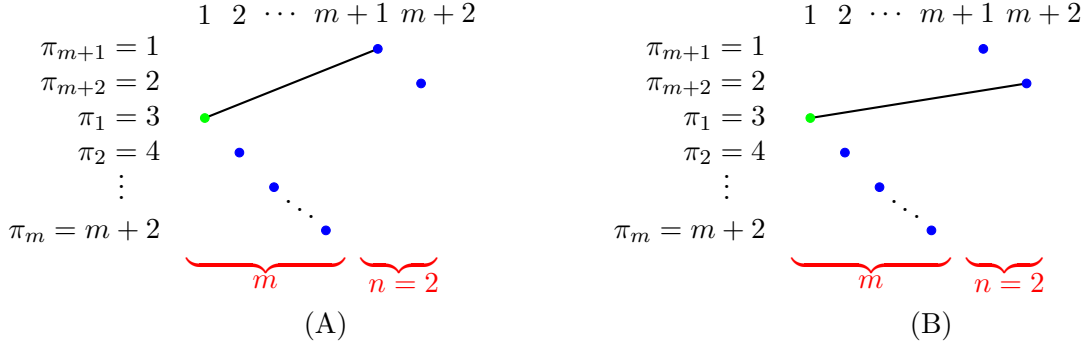
\begin{figure}[ht]
\centering
\begin{tikzpicture}[scale=0.23]
    
	\draw[thick] (2,-5)--(12,-1);
    \tdot{2}{-5}{green}
    \tdot{4}{-7}{blue}
    \tdot{6}{-9}{blue}
    \node at (7.5, -9.8) {$\ddots$};
    \tdot{9}{-11.5}{blue}
    \tdot{12}{-1}{blue}
    \tdot{14.5}{-3}{blue}
    \node at (2,1) {$1$};
    \node at (4,1) {$2$};
    \node at (6.5,1) {$\cdots$};
    \node at (10.5,1) {$m+1$};
    \node at (15.5,1) {$m+2$};
    
    \node [left] at (0,-1) {$\pi_{m+1} = 1$};
    \node [left] at (0,-3) {$\pi_{m+2} = 2$};
    \node [left] at (0,-5) {$\pi_{1} = 3$};
    \node [left] at (0,-7) {$\pi_{2} = 4$};
    \node [left] at (-2,-8.5) {$\vdots$};
    \node [left] at (0,-11.5) {$\pi_{m} = m+2$};
    \node[rotate = 0, red] at (5.5, -13.5) {$\underbrace{\hspace{2.1cm}}$};
    \node[red] at (5.5,-14.5) {$m$};
    \node[rotate = 0, red] at (13.2, -13.5) {$\underbrace{\hspace{1cm}}$};
    \node[red] at (13.5,-14.5) {$n=2$};
    \node at (9,-17) {(A)};
	
	\begin{scope}[shift={(35,0)}]
	\draw[thick] (2,-5)--(14.5,-3);
    \tdot{2}{-5}{green}
    \tdot{4}{-7}{blue}
    \tdot{6}{-9}{blue}
    \node at (7.5, -9.8) {$\ddots$};
    \tdot{9}{-11.5}{blue}
    \tdot{12}{-1}{blue}
    \tdot{14.5}{-3}{blue}
    \node at (2,1) {$1$};
    \node at (4,1) {$2$};
    \node at (6.5,1) {$\cdots$};
    \node at (10.5,1) {$m+1$};
    \node at (15.5,1) {$m+2$};
    
    \node [left] at (0,-1) {$\pi_{m+1} = 1$};
    \node [left] at (0,-3) {$\pi_{m+2} = 2$};
    \node [left] at (0,-5) {$\pi_{1} = 3$};
    \node [left] at (0,-7) {$\pi_{2} = 4$};
    \node [left] at (-2,-8.5) {$\vdots$};
    \node [left] at (0,-11.5) {$\pi_{m} = m+2$};
    \node[rotate = 0, red] at (5.5, -13.5) {$\underbrace{\hspace{2.1cm}}$};
    \node[red] at (5.5,-14.5) {$m$};
    \node[rotate = 0, red] at (13.2, -13.5) {$\underbrace{\hspace{1cm}}$};
    \node[red] at (13.5,-14.5) {$n=2$};
    \node at (9,-17) {(B)};
	\end{scope}
\end{tikzpicture}
\caption{The two subgraphs of $\Sub{\bipart{m}{2}}$ containing a single edge incident to $1$ and no other edges: (A) the subgraph with edge $(1,m+1)$; (B) the subgraph with edge $(1,m+2)$. Both subgraphs are valid.\label{fig:insert_twomore_valid}}
\end{figure}

It therefore remains to count the number of valid $1$-subgraphs of $G_{\bipart{m}{n}}$ with two edges, at least one of which is incident to $1$. As explained, Lemma~\ref{lem:twoedges_1_j_valid} implies that there are exactly $m-1$ such $1$-subgraphs with exactly one edge incident to $1$. Finally, Lemma~\ref{lem:twoedges_1st_m+1_m+2} says that the $1$-subgraph with two edges incident to $1$ is valid if, and only if, $m$ is odd.

Bringing this together and applying the induction hypothesis, we get 
$$ \vert \FibMVP{m+2}{\bipart{m}{2}} \vert = 
\vert \FibMVP{m+1}{\bipart{m-1}{2}} \vert + 2 + (m-1) + \varepsilon_m = m + \lfloor \frac{m^2}{2} \rfloor + m + 1 + \varepsilon_m,$$
where $\varepsilon_m = 1$ if $m$ is odd, and $\varepsilon_m = 0$ if $m$ is even. It is then straightforward to obtain the desired formula.

\end{proof}

\section{Interpreting the MVP outcome through the Abelian sandpile model}\label{sec:asm_prelims}

In this section we give an alternate interpretation of the MVP outcome map $\OOMVP{}$ in terms of the so-called \emph{Abelian sandpile model} (ASM). The ASM is a dynamic process on a graph. It was first introduced by Bak, Tang and Wiesenfeld~\cite{BTW} as an example of a process exhibiting the phenomenon known as \emph{self-organised criticality}. Later, Dhar~\cite{Dhar1990} formalised and named the model. We begin by introducing the model and important related concepts. In this work, we will only be concerned by the connection between the ASM and parking functions, which occurs on the complete graph $K_n$ (this is the graph with vertex set $[n]$ and one edge between any pair of (distinct) vertices). As such, we restrict ourselves to this setting here, essentially ignoring the geometry of the underlying graph.

\subsection{The Abelian sandpile model on complete graphs}\label{subsec:ASM}

A (sandpile) \emph{configuration} is a vector $c = (c_1, \cdots, c_n) \in \Zp^{n}$ which assigns a non-negative integer to each vertex. 
We think of $c_i$ as denoting the number of grains of sand at vertex $i$.
We denote $\Config{n}$ the set of sandpile configurations with $n$ vertices, called $n$-configurations for short. 
For $i \in [n]$, let $\alpha^i \in \Config{n}$ be the configuration such that $\alpha^i_i = 1$ and $\alpha^i_j = 0$ for all $j \neq i$.

Given a configuration $c$ and a vertex $i \in [n]$, if $c_i < n$, then the vertex $i$ is said to be \emph{stable} in the configuration $c$. Otherwise the vertex $i$ is \emph{unstable}. If all vertices in a configuration are stable, this configuration is stable, and we denote the set of all stable $n$-configurations as $\Stable{n}$. 
Unstable vertices topple as follows. 
For a configuration $c \in \Config{n}$ and a vertex $i \in [n]$ which is unstable in $c$, we define the \emph{toppling operator} at vertex $i$, denoted $\Top_i$, by:
\begin{equation}\label{eq:toppling}
\Top_i(c) := c - n \cdot \alpha^i + \sum_{j \neq i} \alpha^j, 
\end{equation}
where the addition operator on configurations denotes pointwise addition at each vertex. In words, the toppling of a vertex $i$ sends one grain of sand from $i$ to each of the remaining $(n-1)$ vertices $j$, and an additional grain exits the system.

Other vertices may become unstable after performing this toppling once, and we topple these also. 
Because whenever we topple an unstable vertex, one grain of sand exits the system, it is straightforward to show that, starting from an unstable configuration $c$ and toppling successively unstable vertices, we will eventually reach a stable configuration $c'$. We say that a sequence $S := v_1, \cdots, v_k$ of vertices is a \emph{toppling sequence} for $c$ if, for any $j < k$, the vertex $v_{j+1}$ is unstable in the configuration $\Top_{v_j} \cdots \Top_{v_1}(c)$, and $\Top_{v_k} \cdots \Top_{v_1}(c) \in \Stable{n}$. 
In words, starting from the configuration $c$, we can topple vertices of $S$ in order, and obtain a stable configuration after toppling them all.

Dhar showed (see e.g.~\cite[Section~5.2]{Dhar}) that all toppling sequences are equivalent up to permutation of the vertices, and that the stable configuration $c'$ reached after toppling all vertices in a toppling sequence $S$ does not depend on the order of $S$.
We call this $c'$ the \emph{stabilisation} of $c$, and denote it $\Stab(c)$.

We now define a Markov chain on the set $\Stable{n}$ of stable configurations.
Fix a probability distribution $\mu=(\mu_i)_{i \in [n]}$ on $[n]$ such that $\mu_i>0$ for all $i \in [n]$.
At each step of the Markov chain we add a grain at the vertex $i$ with probability $\mu_i$ and stabilise the resulting configuration.

The \emph{recurrent} configurations are those appear infinitely often in the long-time running of this Markov chain. We denote $\Rec{n}$ the set of recurrent $n$-configurations. The study of the recurrent configurations has been of central importance in ASM research (see e.g.~\cite{SelWheel} and references therein). Here we recall a classical characterisation of recurrent configurations: the so-called \emph{burning algorithm} due to Dhar~\cite[Section~6.1]{Dhar}, which provides a simple algorithmic process to check if a given configuration is recurrent or not.

\begin{theorem}\label{thm:characterisation_rec_configs}
Let $n \geq 1$, and $c \in \Stable{n}$ be a stable configuration. We define $\tilde{c} := c + \sum\limits_{i \in [n]} \alpha^i$ to be the configuration obtained by adding one grain to each vertex in $c$. Then $c$ is recurrent if, and only if, there exists a toppling sequence $S$ for $\tilde{c}$ in which each vertex of $[n]$ appears exactly once. Moreover, in this case, we have $\Stab \left( \tilde{c} \right) = c$.
\end{theorem}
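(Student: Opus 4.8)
The plan is to reduce the statement to the single equivalence that a stable configuration $c$ is recurrent if and only if $\Stab(\tilde c) = c$, and to obtain the ``Moreover'' clause essentially for free. The main tool throughout is the standard consequence of the abelian property that $\Stab(x + y) = \Stab\left(\Stab(x) + y\right)$ for any configurations $x, y$; this follows by concatenating a toppling sequence stabilising $x$ with one stabilising $\Stab(x) + y$ and invoking uniqueness of the stabilisation. Writing $E(c) := \Stab\left(c + \sum_{i \in [n]} \alpha^i\right) = \Stab(\tilde c)$ for the ``add one grain everywhere and stabilise'' operation, this identity yields $E^t(c) = \Stab\left(c + t \sum_{i \in [n]} \alpha^i\right)$ for all $t \ge 1$, which will be used repeatedly.

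First I would dispose of the ``Moreover'' clause by a purely linear computation on firing counts. For a stable $c$, let $k_\ell \ge 0$ denote the number of times vertex $\ell$ topples in the (order-independent, by the abelian property) stabilisation of $\tilde c$. Each toppling of $\ell$ changes the configuration by $-n\alpha^\ell + \sum_{j \ne \ell}\alpha^j$, so imposing $\Stab(\tilde c) = c$, i.e.\ that the total change equals $-\sum_{i}\alpha^i$, forces $(n+1)k_\ell = 1 + \sum_j k_j$ for every $\ell$. Hence all $k_\ell$ are equal, and solving gives $k_\ell = 1$ for all $\ell$. Conversely, if every vertex fires exactly once the net change is exactly $-\sum_i \alpha^i$, so $\Stab(\tilde c) = \tilde c - \sum_i \alpha^i = c$. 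Since any toppling sequence for $\tilde c$ contains each vertex $k_\ell$ times (again by the abelian property), the existence of a toppling sequence using each vertex exactly once is equivalent to $\Stab(\tilde c) = c$; this simultaneously proves the ``Moreover'' clause and reduces the theorem to showing that $c$ is recurrent if and only if $E(c) = c$.

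The bridge between the Markov-chain notion of recurrence and this fixed-point condition is the characterisation that $c$ is recurrent if and only if it is reachable from the maximal stable configuration $c_{\max} := (n-1, \cdots, n-1)$, equivalently $c = \Stab(c_{\max} + a)$ for some $a \ge 0$ (using the $\Stab$-identity to collapse a sequence of single-grain additions into one batch $a$). I would justify this from the definition as follows: every stable $c$ satisfies $c \le c_{\max}$ pointwise, so adding the $c_{\max} - c \ge 0$ missing grains reaches $c_{\max}$ without any toppling, whence $c_{\max}$ is reachable from every state; standard finite-Markov-chain theory then makes $c_{\max}$ recurrent and its forward-reachable set the unique recurrent class. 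I would also record the base computation $E(c_{\max}) = c_{\max}$ with each vertex firing once: toppling in the order $1, 2, \cdots, n$ from $\tilde c_{\max} = (n, \cdots, n)$ is legal and returns to $c_{\max}$.

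With these in hand, both directions are short. For the forward direction, a recurrent $c$ equals $\Stab(c_{\max} + a)$, so by the $\Stab$-identity $E(c) = \Stab\left(c_{\max} + a + \sum_i \alpha^i\right) = \Stab\left(\Stab(c_{\max} + \sum_i\alpha^i) + a\right) = \Stab(c_{\max} + a) = c$, using $E(c_{\max}) = c_{\max}$. For the converse, suppose $E(c) = c$ and set $b := c_{\max} - c \ge 0$. Choosing any integer $t \ge \max_\ell b_\ell$ makes $g := t\sum_i \alpha^i - b \ge 0$, and then $c_{\max} + g = c + t\sum_i\alpha^i$, whence $\Stab(c_{\max} + g) = E^t(c) = c$; thus $c$ is reachable from $c_{\max}$ and is therefore recurrent. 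I expect the main obstacle to be the third ingredient above, namely pinning down cleanly the equivalence between the probabilistic definition of recurrence and reachability from $c_{\max}$ from the given Markov-chain setup; the remaining steps are direct applications of the abelian property and the firing-count bookkeeping.
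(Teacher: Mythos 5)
Your proposal is correct, but there is no internal proof to compare it against: the paper states this theorem as a recalled classical result (Dhar's burning algorithm, with a citation to Dhar) and gives no proof of it. Your argument is therefore a self-contained reconstruction, and it follows the standard route, correctly. The firing-count bookkeeping is right: writing $k_\ell$ for the (order-independent) number of topplings of vertex $\ell$ in the stabilisation of $\tilde{c}$, the condition $\Stab(\tilde{c}) = c$ is equivalent to $-(n+1)k_\ell + \sum_j k_j = -1$ for every $\ell$, which forces $k_\ell = 1$ for all $\ell$; conversely each vertex toppling once yields net change $-\sum_i \alpha^i$. This simultaneously handles the ``Moreover'' clause and converts the burning test into the fixed-point condition $\Stab(\tilde{c}) = c$. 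The abelian identity $\Stab(x+y) = \Stab\left(\Stab(x)+y\right)$ does follow from the order-independence of stabilisation stated in the paper, though you should make the monotonicity step explicit: toppling operators are translations, so adding $y \geq 0$ preserves the legality of every step of a toppling sequence for $x$, which is what lets you concatenate. The computation $\Stab\left(c_{\max} + \sum_i \alpha^i\right) = c_{\max}$ with each vertex toppling once, the forward direction $E(c) = \Stab(c_{\max}+a+\sum_i\alpha^i) = \Stab(c_{\max}+a) = c$, and the converse via $E^t(c) = \Stab\left(c + t\sum_i \alpha^i\right)$ with $t \geq \max_\ell (c_{\max}-c)_\ell$ are all correct. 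The only step resting on material outside the paper is the identification of the recurrent configurations with the stable configurations reachable from $c_{\max}$; your justification (every stable configuration reaches $c_{\max}$ by adding the missing grains without toppling, so the forward-reachable set of $c_{\max}$ is the unique recurrent class of the finite chain) is sound, and given that the paper's own definition of recurrence (``appear infinitely often in the long-time running'') is informal, invoking standard finite Markov chain theory here is a reasonable and honest resolution, which you correctly flagged as the delicate point.
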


There is a natural partial order on the set $\Rec{n}$ of recurrent configurations. For two configurations $c, c' \in \Rec{n}$, we define $c \preceq c'$ if, and only if, $c_i \leq c'_i$ for all $i \in [n]$. A \emph{minimal recurrent configuration} is a recurrent configuration which is minimal for this partial order. In words, a minimal recurrent configuration is a recurrent configuration where the removal of one grain of sand from any vertex would cause the configuration to no longer be recurrent. We denote $\MinRec{n}$ the set of minimal recurrent $n$-configurations. The following result appears in various parts of the literature (see e.g.~\cite{Sel}).

\begin{proposition}\label{pro:minrec_perm}
Let $n \geq 1$, and $c \in \Config{n}$ be a configuration. Then $c$ is minimal recurrent if, and only if, $c$ is a permutation of the set $\{0, \cdots, n-1\}$.
\end{proposition}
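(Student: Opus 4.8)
The plan is to reduce the statement to a simple \emph{domination} characterisation of recurrence, obtained from the burning algorithm of Theorem~\ref{thm:characterisation_rec_configs}, and then to read off both implications from it. Write $|c| := \sum_{i \in [n]} c_i$ for the total number of grains, and let $c_{(1)} \leq \cdots \leq c_{(n)}$ denote the increasing rearrangement of the entries of a stable configuration $c$. The key claim I would establish first is
\[ c \in \Rec{n} \iff c_{(i)} \geq i-1 \ \text{ for all } i \in [n]. \]

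To prove this claim I would apply Theorem~\ref{thm:characterisation_rec_configs} to $\tilde c = c + \sum_{i} \alpha^i$. If we topple each vertex exactly once, in some order $v_1, \dots, v_n$, then just before $v_k$ topples it has received one grain from each of $v_1, \dots, v_{k-1}$, so it holds $\tilde c_{v_k} + (k-1) = c_{v_k} + k$ grains; it is unstable (and hence topple-able) precisely when $c_{v_k} + k \geq n$, i.e.\ $c_{v_k} \geq n-k$. Thus a valid single-pass toppling order exists if and only if the vertices can be ordered so that the $k$-th one has value at least $n-k$. Since the thresholds $n-k$ are decreasing in $k$, a standard greedy/rearrangement (or Hall) argument shows this is feasible if and only if assigning values in decreasing order works, i.e.\ iff $d_k \geq n-k$ for all $k$, where $d_1 \geq \cdots \geq d_n$ is the decreasing rearrangement of $c$; re-indexing by $i = n+1-k$ turns this into $c_{(i)} \geq i-1$. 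I expect this greedy-optimality step to be the main obstacle to writing out carefully, although it is routine.

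From the claim two facts follow at once. First, $|c| = \sum_i c_{(i)} \geq \sum_{i=1}^n (i-1) = \binom{n}{2}$ for every $c \in \Rec{n}$, with equality if and only if $c_{(i)} = i-1$ for all $i$, i.e.\ if and only if $c$ is a permutation of $\{0, \dots, n-1\}$ (such a configuration is clearly stable, and recurrent by the claim). This already gives the ``if'' direction: if $c$ is a permutation of $\{0,\dots,n-1\}$ and $c' \in \Rec{n}$ satisfies $c' \preceq c$ with $c' \neq c$, then $|c'| < |c| = \binom{n}{2}$, contradicting $|c'| \geq \binom{n}{2}$. Hence no recurrent configuration lies strictly below $c$, so $c \in \MinRec{n}$.

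For the ``only if'' direction I would argue the contrapositive: if $c \in \Rec{n}$ is \emph{not} a permutation of $\{0,\dots,n-1\}$, I exhibit a recurrent configuration strictly below it. Since $c$ is not such a permutation, some inequality $c_{(i)} \geq i-1$ is strict; let $k$ be the least index with $c_{(k)} \geq k$, and decrement by one the vertex realising the $k$-th smallest value. Letting $[a,b]$ be the block of positions sharing the value $c_{(k)}$ (so $a \leq k \leq b$), one checks that the new increasing rearrangement agrees with the old one except at position $a$, where the value becomes $c_{(k)}-1$; since $c_{(k)} \geq k \geq a$ yields $c_{(k)} - 1 \geq a-1$, while all other positions keep a value $c_{(j)} \geq j-1$, the domination condition survives at every position. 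Thus the decremented configuration is again recurrent and is strictly below $c$, so $c \notin \MinRec{n}$, completing the proof. The only delicate point here is the bookkeeping of how the sorted sequence changes when a repeated value is decremented, which the choice of the least slack index $k$ keeps under control.
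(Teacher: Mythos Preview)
Your proof is correct. Note, however, that the paper does not actually supply a proof of this proposition: it is stated as a known result, with a citation to the literature, so there is no ``paper's own proof'' to compare against.

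That said, your approach is entirely standard and essentially what one finds in the cited references. The domination characterisation $c \in \Rec{n} \iff c_{(i)} \geq i-1$ for all $i$ is precisely the image, under the Cori--Rossin bijection $p = n - c$ of Theorem~\ref{thm:bij_rec_pf}, of the classical characterisation of parking functions via their non-decreasing rearrangement (cf.\ the reference to \cite[Section~1.1]{Yan} elsewhere in the paper). Your derivation of it directly from the burning algorithm is clean, and the subsequent grain-counting argument ($|c| \geq \binom{n}{2}$ with equality iff $c$ is a permutation of $\{0,\dots,n-1\}$) is the natural way to finish. The bookkeeping in the ``only if'' direction---tracking how decrementing a repeated value affects the sorted sequence---is handled correctly by your choice of the least slack index $k$; in particular $c_{(k)} \geq k \geq 1$ ensures the decremented value stays nonnegative, and $c_{(k)} \geq k \geq a$ ensures the domination condition survives at position $a$.
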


Given a minimal recurrent $n$-configuration $c \in \MinRec{n}$, we define the \emph{canonical toppling} of $c$ to be the permutation $\pi = \pi_1 \cdots \pi_n \in S_n$ where for each $i$, $\pi_i$ is the unique index $j$ such that $c_j = n-i$. We denote this permutation $\CanTop{c}$.

\begin{example}\label{ex:canon_top}
Consider the minimal recurrent configuration $c = (2, 4, 3, 0, 1) \in \MinRec{5}$. We calculate $\pi := \CanTop{c}$. By construction $\pi_1$ is the vertex with $5-1 = 4$ grains, i.e.\ $\pi_1 = 2$, $\pi_2$ is the vertex with $3$ grains, i.e.\ $\pi_2 = 3$, and so on. Finally we get $\CanTop{c} = 23154$.
\end{example}

The terminology \emph{canonical toppling} comes from the following observation. For $c \in \Rec{n}$, Dhar's burning criterion says that there must be a toppling sequence $S = \pi_1, \cdots, \pi_n$ for $\tilde{c}$ in which each vertex of $[n]$ appears exactly once, i.e.\ $S$ can be thought of as a permutation of $[n]$. It is straightforward to see that $\CanTop{c}$ is in fact the only possible toppling sequence for $\tilde{c}$ if $c$ is minimal recurrent, given Proposition~\ref{pro:minrec_perm}.

\subsection{The connection between ASM and MVP parking functions}\label{subsec:ASM_MVP}

In this section we will link the ASM to the MVP outcome map. We first recall the bijection between recurrent configurations and parking functions from the seminal work by Cori and Rossin~\cite{CR}.

\begin{theorem}\label{thm:bij_rec_pf}
Let $n \geq 1$. For a configuration $c = (c_1,\cdots,c_n) \in \Config{n}$, define $p = (p_1,\cdots,p_n) := (n-c_1,\cdots,n-c_n)$ to be the n-complement of $c$ (we write $p = n-c$ for short). Then $c$ is recurrent if, and only if, $p$ is a (MVP) parking function. Thus the map $c \mapsto n-c$ defines a bijection from $\Rec{n}$ to $\MVP{n}$ ($= \PF{n}$).
\end{theorem}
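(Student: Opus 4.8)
The plan is to reduce both membership conditions --- ``$c$ is recurrent'' and ``$p = n - c$ is a parking function'' --- to one and the same sorted inequality, using Dhar's burning criterion (Theorem~\ref{thm:characterisation_rec_configs}) on the recurrence side and the classical non-decreasing rearrangement test on the parking side. Recall (see e.g.~\cite[Section~1.1]{Yan}) that $p \in [n]^n$ is a parking function if and only if its non-decreasing rearrangement $a_1 \le \cdots \le a_n$ satisfies $a_i \le i$ for all $i \in [n]$. Since a recurrent configuration is in particular stable, its entries lie in $\{0, \ldots, n-1\}$, so $p = n - c$ has entries in $[n]$ and the complement map is well-defined; as a coordinate-wise operation it is visibly a bijection from $\{0,\ldots,n-1\}^n$ onto $[n]^n$, so once the equivalence of the two conditions is established, the bijectivity claim follows at once by restriction to the relevant subsets.

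Next I would make Dhar's criterion explicit on the complete graph. By Theorem~\ref{thm:characterisation_rec_configs}, $c \in \Stable{n}$ is recurrent iff $\tilde{c} = c + \sum_i \alpha^i$ admits a toppling sequence $v_1, \ldots, v_n$ using each vertex exactly once. The key computation is to track the grain count at vertex $v_k$ just before it topples: it starts with $\tilde{c}_{v_k} = c_{v_k} + 1$ grains and receives exactly one grain from each of the previously toppled vertices $v_1, \ldots, v_{k-1}$, for a total of $c_{v_k} + k$. Toppling requires this to be at least $n$, so such a sequence is valid precisely when $c_{v_k} \ge n - k$ for every $k$ (and one checks that after all $n$ topplings the configuration returns to the stable $c$, consistent with the theorem). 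Thus $c$ is recurrent iff the vertices can be ordered so that $c_{v_k} \ge n - k$ for all $k$. A short exchange argument then shows this is possible iff the decreasing rearrangement $c_{(1)} \ge \cdots \ge c_{(n)}$ already satisfies $c_{(k)} \ge n - k$ for all $k$: the thresholds $n - k$ decrease in $k$, so assigning the $k$-th largest value to position $k$ is optimal, and conversely if some ordering works then the values $c_{v_1}, \ldots, c_{v_k}$ are all at least $n - k$, which forces $c_{(k)} \ge n - k$.

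Finally I would feed this through the complement. Writing $p_v = n - c_v$, the numbers $n - c_{(1)} \le \cdots \le n - c_{(n)}$ are increasing and form exactly the multiset of entries of $p$, so they are its non-decreasing rearrangement; that is, $a_i = n - c_{(i)}$. Hence $a_i \le i \iff c_{(i)} \ge n - i$, and comparing with the previous paragraph we conclude that $p$ is a parking function iff $c$ is recurrent. Combined with the first paragraph, this proves the stated bijection. I expect the main obstacle to be the second paragraph: correctly reading off the burning dynamics on $K_n$ to obtain the inequality $c_{v_k} \ge n - k$, and packaging the existence of a valid ordering into the single sorted condition via the exchange argument. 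The complement translation in the last step is then routine bookkeeping.
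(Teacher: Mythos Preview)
Your argument is correct. The key computation --- that in the burning process on the paper's $K_n$ setup, vertex $v_k$ has exactly $c_{v_k} + k$ grains when its turn comes, so the toppling sequence is valid iff $c_{v_k} \ge n-k$ for all $k$ --- is right, and the exchange/rearrangement step and the complement translation are both clean.

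However, there is no ``paper's own proof'' to compare against: the paper does not prove Theorem~\ref{thm:bij_rec_pf} at all. It is stated as a recalled result from Cori and Rossin~\cite{CR} (``We first recall the bijection \ldots''), and the paper immediately moves on to use it. So your proposal is not an alternative to the paper's argument; it simply supplies a proof where the paper gives only a citation. If anything, your write-up is a natural fit for this paper specifically, since it leans on the burning criterion exactly as formulated in Theorem~\ref{thm:characterisation_rec_configs} and on the sorted characterisation of parking functions already invoked elsewhere in the text.
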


Our main goal of this section is an interpretation of the MVP outcome of a given parking function $p$ through the recurrent configuration corresponding to $p$ via the bijection above. We begin by describing an algorithm which reduces a recurrent configuration $c$ to a minimal recurrent configuration $c' \preceq c$.

\begin{algorithm}\label{algo:c-minrec}
Given an input recurrent configuration $c$, we modify $c$ as follows. In each iteration, we look for the first pair of duplicate values in $c$.
\begin{enumerate}
\item
\begin{itemize}
\item If $c$ contains no duplicate values, return $c$.
\item Otherwise, define $j:= \min\{j' \in [n]; c_{j'} \in \{c_1,\cdots,c_{j'-1}\}\}$, to be the index of the first duplicate value encountered in $c$, and $i$ to be the index s.t.\ $i<j$ and $c_i=c_j$.
\end{itemize}
\item While there exists $j' \in \{1, \cdots, j\} \setminus \{ i \}$ such that $c_{j'} = c_i$ (i.e.\ $c_i$ is a duplicate value in $\{c_1, \cdots, c_j\}$), decrease the value of $c_i$ by one ($c_i=c_i-1$).
\end{enumerate}

Repeat Step $(1)$ and Step $(2)$ until $c$ is returned.
\end{algorithm}

\begin{remark}\label{rem:algo_terminates}
Algorithm~\ref{algo:c-minrec} necessarily terminates as each time we return to step $(1)$, $j$ strictly increases. Moreover, by construction, the algorithm returns a configuration $c$ such that $c_i \neq c_j$ for all $i \neq j$.
\end{remark}

\begin{example}\label{ex:algo}
Consider the recurrent configuration
$c=(11, 9, 5, 8, 1, 9, 4, 8, 4, 9, 10, 0)$. We wish to determine the output of Algorithm~\ref{algo:c-minrec}.

\vspace{1em}
\textbf{Iteration $1$}:
$\begin{array}{lllllllllllll}
\text{index:} & 1 & \textcolor{blue}{2} & 3 & 4 & 5 & \textcolor{blue}{6} & 7 & 8 & 9 & \textcolor{blue}{10} & 11 & 12 \\
\text{values:} & 11, & \textcolor{red}{9}, & 5, & 8, & 1, & \textcolor{red}{9}, & 4, & 8, & 4, & \textcolor{red}{9}, & 10, & 0 \\
\end{array}
$

Starting from index $1$, the value ``$9$'' (in red) is the first duplicate value encountered (it is the value at index $2$, $6$ and $10$, which are in blue) with this value. 
\begin{itemize}
\item In Step~(1), we set $j$ to be the index of the first duplicate, i.e.\ $j = 6$, and $i$ to be the index where that value was previously encountered, i.e.\ $i = 2$.
\item In Step~(2), we decrement $c_i = c_2$ until it is no longer a value encountered elsewhere to the left of $j = 6$. In this case the values $9$ and $8$ are already present (highlighted in pink) at indices $6$ and $4$ respectively, so we finally set $c_2 = 7$.

\vspace{0.5em}
$\begin{array}{llllllllllllllll}
\text{index:} & 1 & \textcolor{blue}{2} & 3 & 4 & 5 & 6 & 7 & 8 & 9 & 10 & 11 & 12\\
\text{values:} & 11, & \textcolor{red}{7}, & 5, & \colorbox{pink}{8}, & 1, & \colorbox{pink}{9}, & 4, & 8, & 4, & 9, & 10, & 0 \\
\end{array}
$
\end{itemize}

\medskip

\textbf{Iteration $2$}:
$\begin{array}{lllllllllllll}
\text{index:} & 1 & 2 & 3 & \textcolor{blue}{4} & 5 & 6 & 7 & \textcolor{blue}{8} & 9 & 10 & 11 & 12 \\
\text{values:} & 11, & 7, & 5, & \textcolor{red}{8}, & 1, & 9, & 4, & \textcolor{red}{8}, & 4, & 9, & 10, & 0 \\
\end{array}
$

Starting from index $1$, the value ``$8$'' (in red) is the first duplicate value encountered.
\begin{itemize}
\item In Step~(1), we set $j$ to be the index of the first duplicate, i.e.\ $j = 8$, and $i$ to be the index where that value was previously encountered, i.e.\ $i = 4$.
\item In Step~(2), we decrement $c_i = c_4$ until it is no longer a value encountered elsewhere to the left of $j = 8$. In this case the values $8$ and $7$ are already present (highlighted in pink) at indices $8$ and $2$ respectively, so we finally set $c_4 = 6$.

\vspace{0.5em}$\begin{array}{lllllllllllll}
\text{index:} & 1 & 2 & 3 & \textcolor{blue}{4} & 5 & 6 & 7 & 8 & 9 & 10 & 11 & 12 \\
\text{values:} & 11, & \colorbox{pink}{7}, & 5, & \textcolor{red}{6}, & 1, & 9, & 4, & \colorbox{pink}{8}, & 4, & 9, & 10, & 0 \\
\end{array}
$
\end{itemize}

\medskip

\textbf{Iteration $3$}:
$\begin{array}{lllllllllllll}
\text{index:} & 1 & 2 & 3 & 4 & 5 & 6 & \textcolor{blue}{7} & 8 & \textcolor{blue}{9} & 10 & 11 & 12 \\
\text{values:} & 11, & 7, & 5, & 6, & 1, & 9, & \textcolor{red}{4}, & 8, & \textcolor{red}{4}, & 9, & 10, & 0 \\
\end{array}
$

Starting from index $1$, the value ``$4$'' (in red) is the first duplicate value encountered.
\begin{itemize}
\item In Step~(1), we set $j$ to be the index of the first duplicate, i.e.\ $j = 9$, and $i$ to be the index where that value was previously encountered, i.e.\ $i = 7$.
\item In Step~(2), we decrement $c_i = c_7$ until it is no longer a value encountered elsewhere to the left of $j = 9$. In this case only the value $4$ is already present (highlighted in pink) at index $9$, so we finally set $c_7 = 3$.

\vspace{0.5em}
$\begin{array}{lllllllllllll}
\text{index:} & 1 & 2 & 3 & 4 & 5 & 6 & \textcolor{blue}{7} & 8 & 9 & 10 & 11 & 12 \\
\text{values:} & 11, & 7, & 5, & 6, & 1, & 9, & \textcolor{red}{3}, & 8, & \colorbox{pink}{4}, & 9, & 10, & 0 \\
\end{array}
$
\end{itemize}

\medskip

\textbf{Iteration $4$}:
$\begin{array}{lllllllllllll}
\text{index:} & 1 & 2 & 3 & 4 & 5 & \textcolor{blue}{6} & 7 & 8 & 9 & \textcolor{blue}{10} & 11 & 12 \\
\text{values:} & 11, & 7, & 5, & 6, & 1, & \textcolor{red}{9}, & 3, & 8, & 4, & \textcolor{red}{9}, & 10, & 0 \\
\end{array}
$

Starting from index $1$, the value ``$9$'' (in red) is the first duplicate value encountered.
\begin{itemize}
\item In Step~(1), we set $j$ to be the index of the first duplicate, i.e.\ $j = 10$, and $i$ to be the index where that value was previously encountered, i.e.\ $i = 6$.
\item In Step~(2), we decrement $c_i = c_6$ until it is no longer a value encountered elsewhere to the left of $j = 10$. This time the values $9$ (index $10$), $8$ (index $8$), $7$ (index $2$), $6$ (index $4$), $5$ (index $3$), $4$ (index $9$), and $3$ (index $7$) all already appear, so we finally set $c_6 = 2$.

\vspace{0.5em}
$\begin{array}{lllllllllllll}
\text{index:} & 1 & 2 & 3 & 4 & 5 & \textcolor{blue}{6} & 7 & 8 & 9 & 10 & 11 & 12 \\
\text{values:} & 11, & \colorbox{pink}{7}, & \colorbox{pink}{5}, & \colorbox{pink}{6}, & 1, & \textcolor{red}{2}, & \colorbox{pink}{3}, & \colorbox{pink}{8}, & \colorbox{pink}{4}, & \colorbox{pink}{9}, & 10, & 0 \\
\end{array}
$
\end{itemize}

\medskip

\textbf{Iteration $5$}:
$\begin{array}{lllllllllllll}
\text{index:} & 1 & 2 & 3 & 4 & 5 & 6 & 7 & 8 & 9 & 10 & 11 & 12 \\
\text{values:} & 11, & 7, & 5, & 6, & 1, & 2, & 3, & 8, & 4, & 9, & 10, & \phantom{,}0 \\
\end{array}
$

At this point we have reached a configuration $c$ with no duplicate values. Hence, Algorithm~\ref{algo:c-minrec} outputs the final configuration $c=(11, 7, 5, 6, 1, 2, 3, 8, 4, 9, 10, 0)$.

\end{example}

We may notice on the above example that  the final configuration output is in fact a permutation of the set $\{0, \cdots, 11\}$, i.e.\ a minimal recurrent $12$-configuration. This turns out to be true in general, and is one of the main results of this section.

\begin{proposition}\label{pro:c_single_minrec}
Given an input $c \in \Rec{n}$, the output of Algorithm~\ref{algo:c-minrec} is a permutation of the set $\{0,\cdots,n-1\}$, and thus a minimal recurrent $n$-configuration. We denote it $\minrec{c}$. As such, we may view the algorithm as a map $\mathrm{minrec} : \Rec{n} \rightarrow \MinRec{n}$.
\end{proposition}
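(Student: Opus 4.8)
The plan is to show that a single full iteration of Algorithm~\ref{algo:c-minrec} (one pass through Steps~(1) and~(2)) sends a recurrent configuration to a recurrent configuration while keeping all entries non-negative, so that \emph{recurrence is a loop invariant}. Combined with Remark~\ref{rem:algo_terminates}, which guarantees that the algorithm halts at a configuration with pairwise distinct entries, this finishes the proof: the output is recurrent, hence stable with all entries in $\{0, \ldots, n-1\}$, and $n$ pairwise distinct values inside an $n$-element set must exhaust $\{0, \ldots, n-1\}$. Proposition~\ref{pro:minrec_perm} then identifies the output as minimal recurrent. To run the argument I first rephrase recurrence in a convenient counting form: by Theorem~\ref{thm:bij_rec_pf} a configuration $c$ is recurrent if and only if $p = n-c$ is a parking function, and the standard characterisation of parking functions (non-decreasing rearrangement $a_i \le i$) translates into the statement that $c_v \in \{0, \ldots, n-1\}$ for all $v$ together with $\vert \{ v \in [n] : c_v \le t \} \vert \le t+1$ for every integer $t \ge 0$.

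Now I would analyse one iteration starting from a recurrent $c$. Let $j$ and $i < j$ be the indices chosen in Step~(1), so $c_1, \ldots, c_{j-1}$ are pairwise distinct (by minimality of $j$) and $c_i = c_j =: a$. Writing $V := \{ c_1, \ldots, c_{j-1} \}$, the values occurring at the positions $\{1, \ldots, j\} \setminus \{i\}$ are exactly $V$, so Step~(2) lowers $c_i$ to the largest value $b < a$ with $b \notin V$; in particular every value in $\{ b+1, \ldots, a-1 \}$ lies in $V$. Two uses of the counting inequality then do the work. Applying it at threshold $a$ and using that positions $i$ and $j$ both carry the value $a$ gives $\vert \{ v : c_v \le a-1 \} \vert \le (a+1) - 2 = a-1$; this already forces $b \ge 0$, since otherwise all of $\{0, \ldots, a-1\}$ would lie in $V$, producing $a$ distinct entries $\le a-1$ and contradicting the bound just obtained. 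For preservation, observe that the decrement increases the counts $\vert \{ v : c_v \le t \} \vert$ only for $t \in \{b, \ldots, a-1\}$, each by exactly one. For such $t$ the values $\{t+1, \ldots, a-1\}$ all lie in $V$, hence occur among the first $j-1$ entries, so $\vert \{ v : c_v \le t \} \vert \le \vert \{ v : c_v \le a-1 \} \vert - (a-1-t) \le (a-1) - (a-1-t) = t$. After the decrement the count at $t$ is therefore at most $t+1$, and all other thresholds are untouched, so the counting inequality survives. Since entries never increase and $b \ge 0$, the new configuration again has all entries in $\{0, \ldots, n-1\}$ and is thus recurrent.

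The main obstacle is the preservation step of the second paragraph: one must pinpoint the exact source of slack in the recurrence inequality. The key realisation is that the double occurrence of the value $a$ (at positions $i$ and $j$) buys two units of room at threshold $a$, while the skipped values $\{b+1, \ldots, a-1\}$ are guaranteed to sit among the already-distinct prefix $c_1, \ldots, c_{j-1}$; matching these two facts is precisely what upgrades the weak bound $\le t+1$ to the strict $\le t$ needed to absorb the single unit added by the decrement. Everything else — termination via Remark~\ref{rem:algo_terminates}, non-negativity, and the final ``$n$ distinct values in $\{0,\ldots,n-1\}$'' counting argument leading into Proposition~\ref{pro:minrec_perm} — is routine once this estimate is in place.
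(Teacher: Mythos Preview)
Your proof is correct and takes a genuinely different route from the paper. The paper argues at the level of a \emph{single decrement} in Step~(2): whenever $c$ is recurrent with $c_i = c_j$, it shows directly via Dhar's burning algorithm that $c - \alpha^i$ is again recurrent, by manipulating a toppling sequence for $\tilde{c}$ so that $j$ topples immediately before $i$, which then supplies the one missing grain $i$ needs in $\tilde{c'}$. You instead translate recurrence through Theorem~\ref{thm:bij_rec_pf} into the parking-function counting inequality $\vert\{v : c_v \le t\}\vert \le t+1$, and verify it is preserved by an \emph{entire iteration} (lowering $c_i$ from $a$ all the way to $b$) via a direct threshold-counting argument.

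Each approach has its advantages. The paper's toppling-sequence argument is native to the sandpile model and would adapt to other base graphs where a burning criterion is available, and it isolates the cleaner atomic fact ``decrementing at a duplicated value preserves recurrence''. Your argument is specific to $K_n$ (where the Cori--Rossin bijection is just $c \mapsto n-c$), but it is entirely elementary and avoids any manipulation of toppling orders; it also yields the non-negativity $b \ge 0$ as a by-product of the same counting bound rather than needing a separate observation.
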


\begin{proof}
Given that by construction the output configuration has no duplicate values, by Proposition~\ref{pro:minrec_perm} it is sufficient to show that the algorithm always outputs a recurrent configuration. In fact, it is sufficient to show that recurrence is preserved through any single decrement on Step~(2). In particular, it suffices to show that if $c$ is a recurrent configuration, and $i \neq j \in [n]$ are such that $c_i = c_j$ (i.e.\ a duplicated value in $c$), then the configuration $c' := c - \alpha^i$, resulting from removing one single grain of sand from $i$, is also recurrent.

To show this, we first apply Dhar's burning algorithm (Theorem~\ref{thm:characterisation_rec_configs}) to $c$. Since $c$ is recurrent, there exists a toppling sequence $S = v_1,\cdots,v_n$ for $\tilde{c} := c + \sum\limits_{i \in [n]} \alpha^i$. Without loss of generality we may assume that $j$ appears before $i$ in $S$. Otherwise we simply swap them, and this does not affect the ability of any vertex to topple in $S$. Furthermore, we may assume that $i$ immediately follows $j$ in $S$. Indeed, if this is not the case, we move $i$ to immediately follow $j$, yielding a new sequence $S' = v_1, \cdots, v_k = j, v_{k+1} = i, \cdots, v_n$. Since initially $c_i = c_j$, and $i$ and $j$ receive the same number of grains through toppling $v_1, \cdots, v_{k-1}$, we see that after toppling $v_1, \cdots, v_{k-1}$, $i$ and $j$ still have the same number of grains. This implies that they are both unstable at that point (since $j$ has to be in order for $S$ to be a toppling sequence). Moreover, subsequent vertices at most receive one extra grain in $S'$ compared to $S$, which does not affect their capacity to topple. Therefore $S'$ is also a toppling sequence for $\tilde{c}$.

We claim that $S'$ is also a valid toppling sequence for $\tilde{c'}$. Indeed, given that the only difference between $c$ and $c'$ is at the vertex $i$, it is sufficient to show that, starting from the configuration $\tilde{c'}$, the vertex $i$ is unstable after toppling $v_1, \cdots, v_k = j$ in $S'$ (the capacity of other vertices to topple will be the same in $\tilde{c'}$ as in $\tilde{c}$). But by definition we have $c'_i = c_i - 1 = c_j - 1$. Moreover, after toppling $v_1, \cdots, v_{k-1}$, the vertex $j$ must be unstable. In particular, at that point the vertex $i$ requires at most extra grain to in turn become unstable, and it receives that grain when vertex $v_k = j$ topples. Thus $S'$ is also a toppling sequence for $\tilde{c'}$, which implies that $c'$ is recurrent by Theorem~\ref{thm:characterisation_rec_configs}. This completes the proof.
\end{proof}

We are now ready to state the main result of this section.

\begin{theorem}\label{thm:minrec_outcome_CanonTopp}
Given an input $c \in \Rec{n}$, Algorithm~\ref{algo:c-minrec} yields an output $\minrec{c} \in \MinRec{n}$. Moreover, if $p(c) = n-c$ is the parking function corresponding to $c$ via the bijection of Cori and Rossin (Theorem~\ref{thm:bij_rec_pf}), then we have $\OMVP{n}{p(c)} = \CanTop{\minrec{c}}$.
\end{theorem}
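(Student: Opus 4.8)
The first assertion, that $\minrec{c} \in \MinRec{n}$, is exactly Proposition~\ref{pro:c_single_minrec}, so the plan is to concentrate on the identity $\OMVP{n}{p(c)} = \CanTop{\minrec{c}}$. The guiding idea is that Algorithm~\ref{algo:c-minrec} is a bookkeeping device for the MVP parking process under the complementation $p = n - c$: at every stage I will read the entry $c_k$ as encoding that car $k$ currently sits in spot $n - c_k$. Under this dictionary a repeated value $c_i = c_j$ (with $i < j$) says that cars $i$ and $j$ occupy or prefer the same spot $n - c_i$, i.e.\ a collision, and decreasing $c_i$ corresponds to pushing the bumped car $i$ rightward to a later spot.

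The core of the proof will be an induction on the iterations of the algorithm, showing that each iteration faithfully reproduces the next bump of the MVP process. Write $j_t$ for the trigger index (the first duplicate) at iteration $t$; by Remark~\ref{rem:algo_terminates} these strictly increase, and since every modified index $i_{t'}$ satisfies $i_{t'} < j_{t'} \le j_{t-1} < j_t$, the trigger entry $c_{j_t}$ is never altered and still equals $n - p_{j_t}$. I will maintain the invariant that after iteration $t$ the entries $c_1, \dots, c_{j_t}$ are distinct and record, via $k \mapsto n - c_k$, exactly the spots occupied by cars $1, \dots, j_t$ once car $j_t$ has arrived and its induced bump has resolved, while the untouched entries $c_k = n - p_k$ for $k > j_t$ record the still-preferred spots of cars not yet arrived. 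Granting the invariant, termination with no duplicates means all $n$ cars occupy distinct spots, i.e.\ the process has ended and car $k$ sits in spot $n - c'_k$, where $c' = \minrec{c}$. Comparing this with the definition of the outcome permutation ($\pi_{n - c'_k} = k$) and of the canonical toppling ($\CanTop{c'}_{n - c'_k} = k$, by uniqueness in Proposition~\ref{pro:minrec_perm}), and noting that $n - c'_k$ ranges over all of $[n]$, yields $\OMVP{n}{p(c)} = \CanTop{\minrec{c}}$.

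For the inductive step I will verify three matchings at iteration $t$. First, that $j_t$ is the next colliding car: as $c_1, \dots, c_{j_t - 1}$ are distinct (by minimality of the first duplicate) while $c_{j_t} = c_{i_t}$ for a unique $i_t < j_t$, cars $j_{t-1}+1, \dots, j_t - 1$ park without collision, and car $j_t$, preferring spot $n - c_{j_t}$, finds it occupied by car $i_t$ whose current spot is $n - c_{i_t}$. Second, that the bump is correct: car $j_t$ evicts car $i_t$, and the spots occupied by cars $1, \dots, j_t$ other than $i_t$ form exactly the set $\{\, n - c_{j'} : j' \in \{1,\dots,j_t\} \setminus \{i_t\} \,\}$. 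Third, that Step~(2) re-parks $i_t$ correctly: decrementing $c_{i_t}$ while it collides with that set computes the largest value below $c_{j_t}$ absent from it, equivalently the smallest free spot strictly greater than $n - c_{j_t}$, which is precisely where the MVP rule sends the bumped car. This re-establishes the invariant for the next iteration.

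The main obstacle I anticipate is bookkeeping rather than conceptual: I must handle that a single car may be bumped several times (so the same index $i$ is modified in several iterations), that the algorithm's global ``first duplicate'' ordering genuinely coincides with the chronological order of bumps, and that at each bump the comparison set $\{c_{j'} : j' \le j_t,\ j' \ne i_t\}$ equals the occupancy by the cars present at that instant --- no more and no fewer. The strict monotonicity of the $j_t$ and the fact that trigger entries are never modified are the two facts that make this matching clean, so I would foreground them early and rely on them throughout.
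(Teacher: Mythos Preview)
Your proposal is correct and follows essentially the same reasoning as the paper: both hinge on the observation that one iteration of Algorithm~\ref{algo:c-minrec} records exactly one bump of the MVP parking process (your inductive step is the content of the paper's Lemma~\ref{lem:algo_outcome_unchanged}), and your terminal reading-off of positions is the paper's Lemma~\ref{lem:minrec_CanonTopp_outcome}. The paper packages this as two separate lemmas --- each iteration preserves the MVP outcome, and the canonical toppling gives the outcome for a minimal recurrent configuration --- rather than a single state-tracking invariant, but the underlying argument is the same.
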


In words, given a parking function $p \in \MVP{n}$, we can describe its outcome through applying Algorithm~\ref{algo:c-minrec} to the corresponding recurrent configuration $c = n-p \in \Rec{n}$, and taking the canonical toppling of the output minimal recurrent configuration $\minrec{c}$. To prove Theorem~\ref{thm:minrec_outcome_CanonTopp}, we need to prove two things: that the MVP outcome of a parking function $p$ is unchanged through applying Algorithm~\ref{algo:c-minrec} to the corresponding recurrent configuration $c = n-p$ (Lemma~\ref{lem:algo_outcome_unchanged}), and that the canonical toppling does indeed give the correct MVP outcome for a minimal recurrent configuration (Lemma~\ref{lem:minrec_CanonTopp_outcome}).

\begin{lemma}\label{lem:algo_outcome_unchanged}
Let $c \in \Rec{n}$ be a recurrent configuration, and $c'$ be the recurrent configuration obtained from applying a single iteration of Algorithm~\ref{algo:c-minrec} (Steps~(1) and (2)) to $c$. Let $p = n-c$ and $p' = n-c'$ be the MVP parking functions corresponding to $c$ and $c'$ respectively. Then we have $\OMVP{n}{p'}=\OMVP{n}{p}$.
\end{lemma}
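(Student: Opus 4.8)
The plan is to translate the single iteration of Algorithm~\ref{algo:c-minrec} into the complement (parking) picture and then compare the two MVP parking processes step by step. Writing $p = n-c$ and $p' = n-c'$, a single iteration changes exactly one coordinate: Step~(2) decreases $c_i$ by some amount $\delta \geq 1$, so $p'_k = p_k$ for all $k \neq i$ and $p'_i = p_i + \delta$. First I would unpack the index selection of Step~(1): since $j$ is the index of the \emph{first} duplicate, the preferences $p_1, \ldots, p_{j-1}$ are pairwise distinct, while $p_j = p_i$ for the chosen $i < j$. Then I would reinterpret the stopping rule of Step~(2): decrementing $c_i$ while it still collides with some $c_{j'}$ for $j' \in [j] \setminus \{i\}$ corresponds, after taking complements, to increasing $p_i$ until it is no longer equal to any $p_{j'}$ with $j' \in [j] \setminus \{i\}$. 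Since $\{p_{j'} : j' \in [j] \setminus \{i\}\} = \{p_1, \ldots, p_{j-1}\}$ as a set (the entry $p_j = p_i$ restores the value removed at position $i$), the resulting $p'_i$ is precisely the smallest spot strictly greater than $p_i$ that does not belong to $\{p_1, \ldots, p_{j-1}\}$.

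The second step is to analyse the MVP process for $p$ through the arrival of car $j$. Because $p_1, \ldots, p_{j-1}$ are distinct, a short induction shows that cars $1, \ldots, j-1$ each park in their own preferred spot with no bumping, so that once they have parked the occupied spots are exactly $\{p_1, \ldots, p_{j-1}\}$, with car $k$ in spot $p_k$. When car $j$ arrives it bumps car $i$ out of spot $p_i$ and parks there; car $i$ then drives on and settles in the first unoccupied spot strictly after $p_i$, which by the previous paragraph is exactly $p'_i$. Thus, after the first $j$ cars of the process for $p$, the occupancy is: car $k$ in $p_k$ for $k \in \{1,\ldots,j-1\} \setminus \{i\}$, car $j$ in $p_i$, and car $i$ in $p'_i$.

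In parallel I would run the MVP process for $p'$. The key observation is that $p'_1, \ldots, p'_j$ are pairwise distinct: replacing the value $p_i$ at position $i$ by $p'_i \notin \{p_1, \ldots, p_{j-1}\}$ keeps positions $1, \ldots, j-1$ distinct, and $p'_j = p_j = p_i$ is distinct from all of these since the value $p_i$ has now been vacated. Hence cars $1, \ldots, j$ in the process for $p'$ all park in their preferred spots with no bumping, placing car $i$ in $p'_i$, car $j$ in $p_i$, and car $k$ in $p_k$ otherwise. Comparing the two snapshots, after the first $j$ cars the two processes have put exactly the same car in each spot. Since the remaining cars $j+1, \ldots, n$ have identical preferences in $p$ and $p'$, the two processes evolve identically from that point on, giving $\OMVP{n}{p'} = \OMVP{n}{p}$, as desired. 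Throughout I would use that $c'$ is recurrent, so that $p'$ is a genuine parking function and the spot $p'_i \leq n$ exists; this is guaranteed by Proposition~\ref{pro:c_single_minrec}. I expect the main obstacle to be the bookkeeping that identifies the bumped car's landing spot with the value produced by Step~(2); once that identification is pinned down, the ``no bumping among distinct preferences'' argument forces the two snapshots to coincide almost immediately.
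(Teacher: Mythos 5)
Your proposal is correct and follows essentially the same route as the paper: both identify that the first duplicate in $c$ corresponds to the first collision in the MVP process for $p$, and that the stopping rule of Step~(2) makes $p'_i$ exactly the spot car $i$ is bumped to by car $j$, after which the two processes agree. The only difference is one of detail — the paper declares the final equality of outcomes "straightforward to see," whereas you justify it by comparing the occupancy snapshots after car $j$ parks and noting the remaining preferences coincide, which is a welcome tightening rather than a different argument.
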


\begin{proof}
Let $c \in \Rec{n}$ be a recurrent configuration, and $i,j \in [n]$ as in Step~(1) of Algorithm~\ref{algo:c-minrec}. In the corresponding MVP parking function, this means that we have $p_i=p_j=n-c_i=n-c_j$, and moreover that $j$ is the first such $j$ satisfying this equality, which means that it is the first car to produce a collision in the MVP parking process.

Now let $c'$, $p$, $p'$ be as in the statement of the lemma. That is, $c'_i$ is the maximal \emph{value} less than or equal to $c_j$ such that no other \emph{vertex} $j' \leq j$ has the same \emph{number of grains} (i.e.\ $c'_i \neq c_{j'}$ for all $j' \leq j$). In the corresponding parking function $p'$, this implies that $p'_i$ is the minimal \emph{spot} greater than or equal to $p_j$ such that no other \emph{car} $j' \leq j$ has the same \emph{preference} (i.e. $p'_i \neq p_{j'}$ for all $j' \leq j$).

In other words, in the MVP parking process for $p$, $p'_i$ is exactly the spot that car $i$ will move to when it is bumped out of its original preference $p_i$ by car $j$. Also note that this is the only difference between the MVP parking functions $p$ and $p'$. It is then straightforward to see that the outcome of the entire MVP parking process will be the same whether car $i$ initially prefers spot $p_i$ and is first bumped to $p'_i$ by car $j$ (as in the parking process for $p$), or whether car $i$ initially prefers spot $p'_i$ directly without needing to be bumped there (as in the parking process for $p'$). This implies that $p$ and $p'$ have the same MVP outcome, as desired.
\end{proof}

\begin{lemma}\label{lem:minrec_CanonTopp_outcome}
If $c \in \MinRec{n}$ is a minimal recurrent configuration, and $p = p(c) := n - c$ is the corresponding MVP parking function, then we have $\OMVP{n}{p(c)} = \CanTop{c}$.
\end{lemma}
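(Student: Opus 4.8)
The plan is to show that when $c \in \MinRec{n}$ is a minimal recurrent configuration, the MVP outcome of $p = n-c$ equals the canonical toppling $\CanTop{c}$. By Proposition~\ref{pro:minrec_perm}, $c$ is a permutation of $\{0, 1, \cdots, n-1\}$, so the complement $p = n-c$ is a permutation of $\{1, 2, \cdots, n\}$. In other words, $p$ is a parking preference in which each of the $n$ spots is preferred by \emph{exactly one} car: no two cars share a preferred spot. First I would record this crucial observation, since it is what makes the argument clean.

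\textbf{Reducing the MVP process to a collision-free process.}
The key point is that since $p$ is a permutation, there are \emph{no collisions} in the MVP parking process: each car $i$ simply parks in its unique preferred spot $p_i$, and no bumpings ever occur. Therefore the outcome permutation $\pi = \OMVP{n}{p}$ is determined by $\pi_{p_i} = i$ for all $i$, i.e.\ $\pi$ is exactly the inverse permutation of $p$ (viewing $p$ itself as a permutation of $[n]$). Equivalently, the car ending up in spot $s$ is the unique car $i$ with $p_i = s$, which means $\pi_s = p^{-1}(s)$. I would state this as the first half of the proof.

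\textbf{Matching the canonical toppling.}
It then remains to check that $\CanTop{c}$ produces precisely this same permutation. By the definition of canonical toppling given in the excerpt, $\CanTop{c} = \pi'$ where $\pi'_s$ is the unique index $j$ with $c_j = n - s$. Translating via $c = n - p$, the condition $c_j = n-s$ becomes $n - p_j = n - s$, i.e.\ $p_j = s$. Hence $\pi'_s$ is the unique $j$ with $p_j = s$, which is exactly $p^{-1}(s)$. Comparing the two computations, I would conclude $\pi_s = p^{-1}(s) = \pi'_s$ for every spot $s \in [n]$, so $\OMVP{n}{p(c)} = \CanTop{c}$, as desired.

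\textbf{Anticipated obstacle.}
I expect this lemma to be essentially a bookkeeping exercise rather than a deep argument: both sides reduce to the same inverse-permutation computation. The only place demanding care is justifying rigorously that a permutation parking preference yields no bumpings in the MVP process — one must confirm that the ``no two cars prefer the same spot'' property is preserved throughout, so that the outcome is literally $p^{-1}$ and not merely equal to it spot-by-spot. This follows immediately from $p$ being a bijection $[n] \to [n]$, but I would spell it out to make the reduction airtight. Everything else is a direct substitution through the complement map $c \mapsto n - c$.
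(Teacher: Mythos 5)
Your proof is correct and follows essentially the same route as the paper: both arguments rest on the observation that a minimal recurrent $c$ is a permutation of $\{0,\cdots,n-1\}$, so $p = n-c$ is a permutation of $[n]$, the MVP process is collision-free, and both $\OMVP{n}{p}$ and $\CanTop{c}$ reduce to $p^{-1}$. The paper phrases this iteratively (the first vertex of the canonical toppling is the car ending up in spot $1$, then iterate), whereas you compute both sides directly as the inverse permutation — a presentational difference only, and your explicit justification that no bumpings occur is a welcome addition.
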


\begin{proof}
Since $c$ is minimal recurrent, it is a permutation of $\{0, \cdots, n-1\}$. 
By construction, the first vertex recorded in the canonical toppling sequence $\CanTop{c}$ is the unique $k \in [n]$ such that $c_k = n-1$. In terms of the parking function, this means that $k$ is the unique $k \in [n]$ such that $p_k = 1$, i.e.\ the car that ends up in spot $1$. In other words, the first vertex recorded in $\CanTop{c}$ is the first car recorded in $\OMVP{n}{p(c)}$. Iterating this reasoning on the remaining vertices/cars immediately yields the desired result.
\end{proof}

\begin{remark}\label{rem:classical_PF_ASM}
Since MVP parking functions are also classical parking functions, we may ask how the outcome map $\OOPF{}$ for classical parking functions translates to the ASM through the Cori-Rossin bijection. In fact, this can be done in analogous fashion, with a minor modification to Algorithm~\ref{algo:c-minrec}. All that is needed is to replace the decrement of $c_i$ in Step~(2) with the decrement of $c_j$ (in fact, in this alternate version there is no need to define the index $i$ at all). Indeed, in the proof of Lemma~\ref{lem:algo_outcome_unchanged} we saw that the decrement of $c_i$ in Step~(2) corresponds to car $i$ being bumped out of its preferred spot by car $j$. In the classical case, this is replaced by car $j$ driving on to the first available spot (instead of car $i$), so we decrease $c_j$ instead in the corresponding recurrent configuration.
\end{remark}

\section{Discussion and future work}\label{sec:future}

In this paper, we have investigated the outcome fibres of MVP parking functions. We have represented the fibre of a given outcome permutation $\pi$ as certain \emph{valid} subgraphs of the inversion graph $G_{\pi}$ of $\pi$. In turn, this produced new and improved upper and lower bounds on the fibre size $\left\vert \FibMVP{}{\pi} \right\vert$. It remains an open problem to fully characterise which $1$-subgraphs of $G_{\pi}$ are valid, or equivalently which are invalid, only in terms of the subgraphs (without needing to check by running the MVP parking process).

In~\cite[Theorem~3.2]{HarrisMVP} it was shown that there exist invalid $1$-subgraphs if, and only if, the permutation $\pi$ contains at least one of the patterns $321$ and $3412$. In the case of the pattern $321$, Proposition~\ref{pro:upper_bound} gives a general characterisation of a ``forbidden motif'' that would always render a subgraph $S$ invalid in terms of the existence of a directed path $\overrightarrow{P_2}$ in the subgraph $S$. However, this is not the only forbidden motif. Indeed, Example~\ref{ex:invalid} gives the example of a subgraph $S \in \Sub{321}$ with edges $(1,2)$ and $(1,3)$ which is invalid. But this particular motif does not translate to the general case, as detailed in Remark~\ref{rem:matching_not_general}.

In the case of the pattern $3412$, the situation is also complex. Indeed, in Section~\ref{sec:m_n} we fully characterised the valid $1$-subgraphs of the complete bipartite grah $K_{m,n}$. However, as we saw in Lemmas~\ref{lem:twoedges_noncrossing} and \ref{lem:twoedges_crossing} which subgraphs are valid depends on some parity conditions. As such, it seems difficult to hope for a general expression of forbidden motifs in $1$-subgraphs of $G_{\pi}$ when $\pi$ contains the pattern $3412$.

More generally, we noted in Theorem~\ref{thm:MVPPF_perm_patterns} that permutations avoiding the patterns $321$ and $3412$ are exactly those whose inversion graphs are acyclic. However, the arguments for this statement in both this paper and in the previous work by Harris \emph{et al.}~\cite{HarrisMVP} are quite \emph{ad hoc}, relying on a case-by-case analysis of how the MVP parking process behaves in those cases. It seems plausible that there is some sort of ``meta'', more general, explanation for why the appearance of a cycle in the inversion graph forces the existence of invalid $1$-subgraphs, but we have been unable to find such an explanation.

One might also ask how much improvement our upper and lower bounds on fibre sizes give, and how close these bounds are to the actual fibre size. In the general case this depends quite strongly on the permutation $\pi$. Here we provide a table in the case where $\pi = \dec{n}$ is the decreasing permutation (i.e.\ $G_{\pi} = K_n$) for the first few values of $n$. In this table, we calculate the total number of $1$-subgraphs (the original upper bound from~\cite[Theorem~3.1]{HarrisMVP}), the number of $\overrightarrow{P_2}$-free $1$-subgraphs (our new upper bound), the number of valid subgraphs (i.e.\ the fibre size), and the number of HS $1$-subgraphs (our new lower bound).

\begin{table}[ht]
\begin{tabular}{c|c|c|c|c}
$n$ & $1$-subgraphs & $\overrightarrow{P_2}$-free & valid & HS \\
\hline
1 & 1 & 1 & 1 & 1 \\
2 & 2 & 2 & 2 & 2 \\
3 & 6 & 5 & 4 & 4 \\
4 & 24 & 15 & 9 & 8 \\
5 & 120 & 52 & 21 & 16 \\
6 & 720 & 203 & 51 & 32 \\
7 & 5040 & 877 & 127 & 64 \\
8 & 40320 & 4140 & 323 & 128 \\
9 & 362880 & 21147 & 835 & 256 \\
\end{tabular}
\caption{Comparing our new bounds to the actual fibre size in the case of the decreasing permutation $\dec{n}$.\label{table:bound_comparison}}
\end{table}

We can see that, while neither bound appears particularly tight as $n$ grows, we do get quite an improvement on the previous upper bound by imposing the $\overrightarrow{P_2}$-free condition. The somewhat attentive reader will have noticed that the number of HS $1$-subgraphs of $G_{\dec{n}}$ appears to be $2^{n-1}$. This can be established as follows. Given a HS subgraph $S$, and an edge $e = (i, j) \in S$ (with $i < j$), we define $P_e := \{i, i+1, \cdots, j-1\} \subseteq [n-1]$. It is reasonably straightforward to check that the map $S \mapsto \bigcup\limits_{e \in S} P_e$ is a bijection between HS $1$-subgraphs of $G_{\dec{n}}$ and subsets of $[n-1]$.

The highly attentive and eagle-eyed reader will have recognised that the $\overrightarrow{P2}$-free numbers $1, 2, 5, \cdots$ correspond to the well-studied \emph{Bell numbers} given by Sequence~A000110 in the OEIS~\cite{OEIS}. This can be established by giving a bijection between set partitions of $[n]$, which are counted by the Bell numbers (see the OEIS entry), and $\overrightarrow{P2}$-free subgraphs of $G_{\dec{n}}$, $\mathcal{P} \mapsto S$, as follows. For each part $P$ in a set partition $\mathcal{P}$ of $[n]$, we put edges in $S$ between the minimal element of $P$ and all the other elements of $P$ (if $P$ is reduced to a single element $i$, then $i$ is an isolated vertex in $S$). By construction, this gives a $\overrightarrow{P_2}$-free $1$-subgraph $S$, and it is reasonably straightforward to check that this construction is bijective.

We should note that the above bijections on $\overrightarrow{P_2}$-free and HS $1$-subgraphs can be extended to the setting of general permutations, with additional restrictions imposed by the geometry of the inversion graph. For example, in the general setting, the $\overrightarrow{P_2}$-free subgraphs map bijectively to partitions of $[n]$ where, in each part, the minimal element is incident to all other elements.

Another avenue of research explored in this paper concerns the study of certain subsets of MVP parking functions. In Section~\ref{sec:MotzPF} we focused on Motzkin parking functions, where each spot is preferred by at most two cars. In particular, if we restrict ourselves to MVP parking functions whose outcome is the decreasing permutation $\dec{n} := n(n-1) \cdots 1$, we get bijective correspondences with Motzkin paths and non-crossing matching arc diagrams.

Then, in Section~\ref{sec:m_n}, we studied the permutations in the fibre of the permutation $\bipart{m}{n} : = (n+1)(n+2) \cdots (n+m)12\cdots n$ whose corresponding inversion graph is the complete bipartite graph $K_{m,n}$. In particular, via a careful analysis of which $1$-subgraphs are valid, we were able to obtain an explicit enumeration for the fibre size in the case $n=2$. For more general values of $n$, Table~\ref{table:general_m_n} gives the numbers of MVP parking function whose outcome is $\bipart{m}{n}$.

\begin{table}[ht]

\begin{tabular}{c*{7}{|c}}
\diagbox{n}{m} & 1 & 2 & 3 & 4 & 5 & 6 & 7 \\
\hline
1 & 2 & 3 & 4 & 5 & 6 & 7 & 8 \\
2 & 4 & 7 & 12 & 17 & 24 & 31 & 40 \\
3 & 8 & 16 & 30 & 50 & 77 & 110 & 155 \\
4 & 16 & 36 & 70 & 130 & 220 & 341 & 512 \\
5 & 32 & 80 & 161 & 315 & 577 & 967 & 1532 \\
6 & 64 & 176 & 369 & 738 & 1425 & 2560 & 4281 \\
7 & 128 & 384 & 840 & 1706 & 3392 & 6431 & 11337 \\
\end{tabular}
\caption{The complete bipartite fibre sizes $ \left\vert \FibMVP{m+2}{\bipart{m}{n}} \right\vert $ for $1 \leq m,n \leq 7$.\label{table:general_m_n}}
\end{table}

The sequence in the second column for $m=2$ appears to correspond to Sequence~A045891 in the OEIS~\cite{OEIS}, and we leave it to enterprising readers to try to prove this. No other row or column matches any OEIS entry, and neither does the diagonal reading of the array.

One may also note that there are straightforward enumerative values in the cases $m=1$ and $n=1$. These correspond to the cases where the inversion graph is a \emph{star graph}, with a central vertex incident to all other vertices (and no other edges). In both cases, the graph is acyclic, so the fibre size is simply given by the product formula in Theorem~\ref{thm:MVPPF_perm_patterns}, and it is not difficult to get the direct enumerations of Table~\ref{table:general_m_n}. In fact, these cases were already shown as applications of the product formula by Harris \emph{et al.}~\cite[Section~3.1]{HarrisMVP}.

One possible direction of future research involves other families of regular permutations, or permutations where the inversion graph is regular. So far the following cases have been considered: complete graphs, complete bipartite graphs for $n=2$, and star graphs. Another family of regular graphs which are permutation inversion graphs are the \emph{complete split graphs}, and we briefly discuss this case.

For $m,n \geq 1$, the complete split graph $S_{m,n}$ consists of a \emph{clique} (complete subgraph) of size $n$ together with an \emph{independent set} (a set of vertices with no edges between them) of size $m$, with one edge between any vertex in the clique and any vertex in the independent set (see e.g.~\cite{DukesSplit}). There are two families of permutations whose inversion graphs are complete split graphs: the permutations $\splitright{m}{n} := (n+1) \cdots (n+m)n(n-1) \cdots 1$ and $\splitleft{m}{n} := (m+n)(m+n-1)\cdots (m+1)12\cdots m$. It would be interesting to study the MVP outcome fibres of these families of permutations. Note that if $m = 1$, we recover complete graphs and the permutations $\dec{n}$, and if $n=1$, we recover the star graphs discussed above.

Similarly to the complete bipartite case in Section~\ref{sec:m_n}, a good starting point may therefore be to first consider $m=2$ case. In this case the corresponding inversion graph is simply the complete graph with one of its edges removed. We first consider the permutation $\splitright{2}{n-2} = (n-1)n(n-2)(n-3) \cdots 21$ for $n \geq 3$, corresponding to the edge $(1,2)$ being removed in the inversion graph. Numerical evidence from Harris \emph{et al.}~\cite[Section~5]{HarrisMVP} suggests that this permutation may be the one which maximises the MVP fibre size, at least when $n \geq 6$. We conducted our own numerical experimentation to support this, and found that for $n = 7, 8, 9, 10, 11$, the permutation $\splitright{2}{n-2}$ above had a larger MVP fibre size than the decreasing permutation $\dec{n}$. Indeed, if anything, the gap between the fibre sizes seems to be growing, as shown in Table~\ref{table:dec_almostdec} below.

\begin{table}[ht]
\begin{tabular}{c*{9}{|c}}
$n$ & 3 & 4 & 5 & 6 & 7 & 8 & 9 & 10 & 11\\
\hline
$\vert \FibMVP{n}{\dec{n}} \vert $ & 4 & 9 & 21 & 51 & 127 & 323 & 835 & 2188 & 5798 \\
\hline
$\vert \FibMVP{n}{\splitright{2}{n-2}} \vert $ & 3 & 8 & 20 & 51 & 131 & 341 & 897 & 2383 & 6385 \\
\end{tabular}
\caption{Comparing fibre sizes for the decreasing and split permutations.\label{table:dec_almostdec}}
\end{table}

\begin{conjecture}\label{conj:largest_fibre}
For $n \geq 6$, the permutation with the largest MVP fibre size is the split permutation $\splitright{2}{n-2} := (n-1)n(n-2)(n-3) \cdots 21$.
\end{conjecture}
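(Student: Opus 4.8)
The plan is to work entirely on the subgraph side, using Theorem~\ref{thm:MVPPF_subgraphs} to replace each fibre size $\left\vert \FibMVP{n}{\pi} \right\vert$ by the number $\left\vert \Valid{\pi} \right\vert$ of valid $1$-subgraphs of $G_{\pi}$. The statement then becomes: among all $\pi \in S_n$, the quantity $\left\vert \Valid{\pi} \right\vert$ is maximised, and uniquely so, by $\pi = \splitright{2}{n-2}$, whose inversion graph is $K_n$ with the single edge $(1,2)$ deleted. The crucial feature to keep in mind is that this maximiser is \emph{not} the densest graph: by Theorem~\ref{thm:nonCrossing_dec} the complete graph $G_{\dec{n}} = K_n$ forces every valid subgraph to be a non-crossing matching (so these are counted by Motzkin numbers), whereas the many $\overrightarrow{P_2}$-free subgraphs permitted by Proposition~\ref{pro:upper_bound} are mostly \emph{invalid} there. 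Thus any proof must explain why deleting exactly one edge relaxes enough crossing-type ($3412$) obstructions to strictly increase the count, while deleting more edges, or the wrong edge, does not help. One must also decide at the outset whether the symmetric split permutation $\splitleft{2}{n-2}$ achieves the same value, since the word ``the'' in the conjecture asserts uniqueness.

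The first concrete step is to treat the two leading candidates $\dec{n}$ and $\splitright{2}{n-2}$ by setting up recursions for $d_n := \left\vert \Valid{\dec{n}} \right\vert$ and $a_n := \left\vert \Valid{\splitright{2}{n-2}} \right\vert$. For $d_n$ this is simply the Motzkin recursion, via the bijection to $\NC_n$ of Theorem~\ref{thm:nonCrossing_dec}. For $a_n$ I would adapt the deletion technique of Lemma~\ref{lem:addfirst_valid} together with the prime-decomposition argument used in the proof of Theorem~\ref{thm:nonCrossing_dec}: condition on the (at most one) edge incident to each of the two independent-set vertices, peel them off, and reduce to smaller instances, carefully tracking the parity obstructions made explicit in Lemmas~\ref{lem:twoedges_noncrossing} and~\ref{lem:twoedges_crossing}. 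Comparing the resulting exponential growth rates should give $a_n > d_n$ for all large $n$, and a finite verification handles the small cases listed in Table~\ref{table:dec_almostdec}.

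The genuine difficulty, and the step I expect to be the main obstacle, is the \emph{global} bound: one must control $\left\vert \Valid{\pi} \right\vert$ for every other $\pi \in S_n$. The natural strategy is a local-move (compression) argument, showing that from any $\pi$ one can reach $\splitright{2}{n-2}$ through a sequence of elementary operations on $\Inv{\pi}$ — adding or deleting a single inversion, or transposing adjacent values — none of which decreases $\left\vert \Valid{\cdot} \right\vert$. The obstruction is precisely that this quantity behaves erratically under such moves: as Lemmas~\ref{lem:twoedges_noncrossing}, \ref{lem:twoedges_crossing} and~\ref{lem:twoedges_1st_m+1_m+2} demonstrate, whether a subgraph survives depends on \emph{parity} conditions arising from chains of successive bumpings, and Section~\ref{sec:future} records that there is no known description of the invalid subgraphs coming from the $3412$ pattern purely in subgraph terms. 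Consequently no monotone potential is presently available, and a single added edge can create or destroy many valid configurations at once.

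Accordingly, the most promising route to a complete proof is first to resolve the open problem flagged in Section~\ref{sec:future}, namely to obtain a purely combinatorial, subgraph-level characterisation of $\Valid{\pi}$ generalising the non-crossing-matching description of Theorem~\ref{thm:nonCrossing_dec}. With such a characterisation one could write $\left\vert \Valid{\pi} \right\vert$ as a weighted count over admissible subgraphs with an explicit parity condition, and then attack the maximisation by an exchange argument on $\Inv{\pi}$. Failing that, a realistic intermediate target — and the true bottleneck — is to prove the weaker reduction that the maximiser's inversion graph must be \emph{near-complete} (complete minus $O(1)$ edges), thereby cutting the global problem down to a finite family of graphs to be checked by the recursions above. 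Even this reduction appears to require quantifying how the proliferation of $3412$-patterns trades off against the raw count $1 + \left\vert \Inv{\pi} \right\vert$ from Proposition~\ref{pro:lower_bound} and Corollary~\ref{cor:Harris_bound}, which is exactly the delicate balance the conjecture encodes.
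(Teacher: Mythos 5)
The statement you were asked to prove is labelled a \emph{conjecture} in the paper, and the paper indeed offers no proof of it: the only support given is the numerical evidence of Table~\ref{table:dec_almostdec} (fibre sizes for $n \leq 11$) together with the heuristic remark that deleting the edge $(1,2)$ from $K_n$ relaxes the matching constraint forced on valid subgraphs by Theorem~\ref{thm:nonCrossing_dec}. Your proposal is likewise not a proof; it is a research programme in which every load-bearing step is explicitly deferred, and you say so yourself. Your diagnosis of \emph{why} it is hard is accurate and essentially coincides with the discussion in Section~\ref{sec:future}: validity of a $1$-subgraph has no known purely graph-theoretic characterisation once $\pi$ contains the pattern $3412$, the parity phenomena of Lemmas~\ref{lem:twoedges_noncrossing}, \ref{lem:twoedges_crossing} and~\ref{lem:twoedges_1st_m+1_m+2} rule out any obvious monotone potential, and hence a compression argument over $\Inv{\pi}$ has nothing to run on. But a correct diagnosis is not a proof, so the conclusion of the review is that the statement remains unproven on both sides.

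To name the gaps concretely: (i) the recursion for $a_n := \left\vert \Valid{\splitright{2}{n-2}} \right\vert$ is never derived, and the peeling technique of Lemma~\ref{lem:addfirst_valid} does not transfer --- it relies on the isolated-vertex structure of $G_{\bipart{m}{2}}$, whereas in $G_{\splitright{2}{n-2}}$ (the complete graph minus $(1,2)$) valid subgraphs need not even be matchings (the paper's own example: for $\pi = 3421$ the subgraph with edges $(1,3)$ and $(1,4)$ is valid), so no analogue of the $\NC_n$ characterisation is available; (ii) the asserted comparison $a_n > d_n$ ``by comparing growth rates'' is exactly what the paper could not establish beyond $n = 11$, and nothing in your outline supplies it; (iii) the global step --- that no permutation outside your candidate list can exceed $\splitright{2}{n-2}$ --- is left entirely open, and your proposed fallback (reduce to near-complete inversion graphs) is itself an unproven reduction. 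One minor point in your favour: your concern that the mirror permutation $\splitleft{2}{n-2}$ might tie the maximum and spoil uniqueness is dissolved by the paper's Theorem~\ref{thm:neardec_Motzkin}, which gives $\left\vert \FibMVP{n}{\splitleft{2}{n-2}} \right\vert = \vert \Motz{n} \vert = \left\vert \FibMVP{n}{\dec{n}} \right\vert$, a strictly smaller quantity according to Table~\ref{table:dec_almostdec}.
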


This may perhaps seem counter-intuitive given our subgraph representation. Indeed we might expect that the permutation with the largest fibre is the one whose inversion graph has the most edges (since more edges means more possible subgraphs). One would therefore expect this to be the decreasing permutation. However, as seen in Section~\ref{subsec:Motz_non_crossing_diagrams}, valid subgraphs in the decreasing case must be matchings. In particular, there cannot be two edges $(i,j)$ and $(i,k)$ with $i < j,k$. Numerical experimentation suggests that for the split permutation $\splitright{2}{n-2}$, we can have such combinations of edges, at least for $i=1,2$. For example, if $\pi = 3421$, the subgraph with edges $(1,3)$ and $(1,4)$ is valid (the parking function associated is $(1,1,1,2)$). It seems plausible that these additional valid subgraphs are more than enough to compensate for the loss of the edge $(1,2)$ compared to the decreasing case.

Now let us consider the split permutation $\splitleft{2}{n-2} := n(n-1)\cdots 312$, for $n \geq 3$. The corresponding inversion graph is the complete graph with edge $(n-1, n)$ removed. It turns out that in this case, the MVP fibres are enumerated by Motzkin numbers. We sketch a proof of this by exhibiting a bijection $\Psi : \Valid{\dec{n}} \rightarrow \Valid{\splitleft{2}{n-2}}$ for $n \geq 3$ as follows.
\begin{enumerate}
\item If $S \in \Valid{\dec{n}}$ does not contain the edge $(n-1, n)$, we simply set $\Psi(S) = S$ (as edge sets).
\item If $S \in \Valid{\dec{n}}$ contains the edge $(n-1, n)$, and the vertex $n-2$ is isolated, we set $\Psi(S) = S \setminus \{(n-1, n)\} \cup \{(n-2, n-1), (n-2, n)\}$.
\item If $S \in \Valid{\dec{n}}$ contains the edge $(n-1, n)$, and an edge $(i, n-2)$ for some $i$, let $S_i$ denote the set of edges with end-points strictly between $i$ and $n-2$. For $e = (j,k) \in S_i$, we define $e' := (j+1, k+1)$ to be $e$ ``shifted'' rightwards by one column, and $S'_i = \{e'; \, e \in S_i \}$. We then set $\Psi(S) = S \setminus \big( S_i \cup \{(i, n-2), (n-1, n)\} \big) \cup S'_i \cup \{(i, n-1), (i, n)\}$. In words, we replace the edges $(i,n-2)$ and $(n-1,n)$ with the edges $(i, n-1)$ and $(i, n)$, and shift all edges which are nested inside $(i, n-2)$ by one column to the right.
\end{enumerate}

\begin{example}\label{ex:dec_split}
We take $n = 8$, and consider the $1$-subgraph $S$ of $\dec{8}$ whose edges are $(2,6)$, $(3,4)$, and $(7,8)$. Here we are in Case~(3) above, so in the $1$-subgraph $S' = \Psi(S)$ we replace the edges $(2,6)$ and $(7,8)$ with edges $(2,7)$ and $(2,8)$, and shift the edge $(3,4)$ to the right, i.e.\ it becomes $(4,5)$. This is illustrated on Figure~\ref{fig:dec_neardec} below. Since $S$ is a non-crossing matching, we know that $S$ is valid by Theorem~\ref{thm:nonCrossing_dec}. To see that $S'$ is valid, we obtain its corresponding parking preference $p := \SubtoPF\left(S'\right) = (2, 2, 6, 4, 4, 3, 2, 1)$. We can then check that $\OMVP{8}{p} = 87654312 = \splitleft{2}{6}$, as desired.

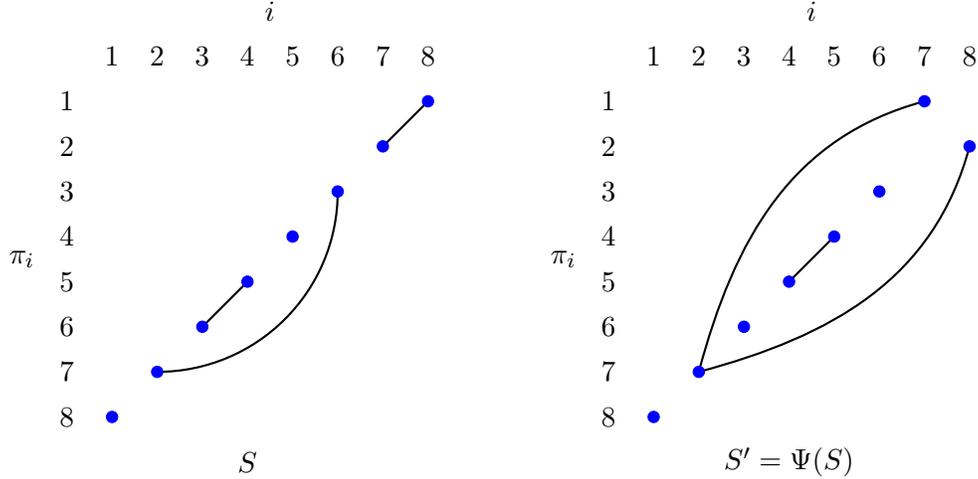
\begin{figure}[ht]
\centering
\begin{tikzpicture}[scale=0.3]
    \draw [thick] (6,-12)--(8,-10);
    \draw [thick, out=0, in=-90] (4,-14) to (12,-6);
    \draw [thick] (14,-4)--(16,-2);
    \foreach \x in {1,...,8}
	   \tdot{2*\x}{-18 + 2*\x}{blue};
    \foreach \x in {1,...,8}
      \node at (2*\x,0) {$\x$};
    \foreach \y in {1,...,8}
      \node at (0,-2*\y) {$\y$};
    \node at (9,2) {$i$};
    \node at (-2,-9) {$\pi_i$};
    \node at (8,-18) {$S$};
    
    \begin{scope}[shift={(24,0)}]
    \draw [thick] (8,-10)--(10,-8);
    \draw [thick, out=15, in=-105] (4,-14) to (16,-4);
    \draw [thick, out=75, in=-165] (4,-14) to (14,-2);
    \foreach \x in {1,...,6}
	   \tdot{2*\x}{-18 + 2*\x}{blue};
	 \tdot{14}{-2}{blue}
	 \tdot{16}{-4}{blue}
    \foreach \x in {1,...,8}
      \node at (2*\x,0) {$\x$};
    \foreach \y in {1,...,8}
      \node at (0,-2*\y) {$\y$};
    \node at (9,2) {$i$};
    \node at (-2,-9) {$\pi_i$};
    \node at (8,-18) {$S' = \Psi(S)$};
    \end{scope}
\end{tikzpicture}
\caption{Illustrating the bijection $\Psi:\Valid{\dec{n}} \rightarrow \Valid{\splitleft{2}{n-2}}$ in Case~(3).\label{fig:dec_neardec}}
\end{figure}
\end{example}

We state the following theorem without proof. It can be shown through a careful analysis of valid $1$-subgraphs of $G_{\splitleft{2}{n-2}}$, similar to the proofs in Sections~\ref{subsec:Motz_non_crossing_diagrams} or \ref{sec:m_n}. The key is that in $S'$, the rightwards shift creates an empty space $i+1$ for car $1$ to park temporarily when it is bumped from spot $i$ by car $2$.

\begin{theorem}\label{thm:neardec_Motzkin}
For any $n \geq 3$, the map $\Psi : \Valid{\dec{n}} \rightarrow \Valid{\splitleft{2}{n-2}}$ is a bijection. In particular, we have $\left\vert \FibMVP{n}{\splitleft{2}{n-2}} \right\vert = \left\vert \FibMVP{n}{\dec{n}} \right\vert = \vert \Motz{n} \vert$.
\end{theorem}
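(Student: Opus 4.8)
The plan is to prove that $\Psi$ is a bijection by checking that it is a well-defined injection into $\Valid{\splitleft{2}{n-2}}$ and then exhibiting its inverse on all of $\Valid{\splitleft{2}{n-2}}$. Throughout I would invoke Theorem~\ref{thm:nonCrossing_dec}, which identifies the domain $\Valid{\dec{n}}$ with the set $\NC_n$ of non-crossing matchings; this structural control is exactly what makes the three cases defining $\Psi$ exhaustive and mutually exclusive (every $S$ is a matching with non-crossing arcs, so if $(n-1,n)\in S$ then $n-1,n$ carry no other arcs, and the vertex $n-2$ is either isolated or carries a single left-arc $(i,n-2)$). I would also record two facts used repeatedly: the only edge of $K_n$ absent from $G_{\splitleft{2}{n-2}}$ is $(n-1,n)$, and $\splitleft{2}{n-2}$ differs from $\dec{n}$ only in that spots $n-1$ and $n$ receive cars $1$ and $2$ rather than $2$ and $1$; consequently cars $1$ and $2$ are the only cars whose target spots differ between the two outcomes.

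First I would verify that $\Psi(S)\in\Sub{\splitleft{2}{n-2}}$ in each case. In Case~(1) this is immediate since $S$ already omits $(n-1,n)$. In Cases~(2) and (3) the inserted arcs $(n-2,n-1),(n-2,n)$, resp.\ $(i,n-1),(i,n)$, are inversions of $\splitleft{2}{n-2}$, and each of $n-1,n$ receives exactly one left-arc (from a common source), so the $1$-subgraph property is preserved; the non-crossing hypothesis guarantees that nothing in $S$ nests over the arc $(i,n-2)$, so the rightward shift of the nested block $S_i$ stays inside $[i+2,n-2]$ and leaves the vertex $i+1$ isolated.

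The heart of the argument is validity, $\OMVP{n}{\SubtoPF(\Psi(S))}=\splitleft{2}{n-2}$, which I would establish by tracking the MVP parking process on $p:=\SubtoPF(\Psi(S))$. The decisive point concerns cars $1$ and $2$. In Cases~(2) and (3) both prefer a common spot ($n-2$, resp.\ $i$), and the freed spot (the isolated vertex $n-1$ in Case~(2), resp.\ $i+1$ in Case~(3)) is precisely where car $1$ lands temporarily once car $2$ bumps it. In Case~(2) the process then resolves in a single pass: car $3$ bumps car $2$ into spot $n$ while car $1$ remains in $n-1$, and all other cars park as in the decreasing process for $S$, giving outcome $\splitleft{2}{n-2}$. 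In Case~(3) cars $1$ and $2$ are carried rightward by successive arrivals (each bump flipping their relative order), and one must show the total number of bumps affecting each car has the parity that lands car $1$ in spot $n-1$ and car $2$ in spot $n$; this is a flip-counting argument entirely analogous to those of Lemmas~\ref{lem:twoedges_noncrossing}, \ref{lem:twoedges_crossing} and \ref{lem:twoedges_1st_m+1_m+2} of Section~\ref{sec:m_n}, and the precise form of the shift (creating exactly one empty interior spot) is what forces the correct parity while leaving every other car's trajectory identical to the decreasing process for $S$. One can organise this cleanly by induction on the non-crossing (prime) decomposition of $S$, exactly as in the proof of Theorem~\ref{thm:nonCrossing_dec}.

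For bijectivity I would define the inverse operations case by case: a valid $T$ with no vertex sending arcs to both $n-1$ and $n$ maps to itself; one containing $(n-2,n-1),(n-2,n)$ has these replaced by $(n-1,n)$; and one containing $(i,n-1),(i,n)$ for $i<n-2$ has the interior block shifted back left and these arcs replaced by $(i,n-2),(n-1,n)$. Injectivity of $\Psi$ is then clear, since the three images are distinguished by whether some vertex is incident to both $n-1$ and $n$ and, if so, whether that vertex equals $n-2$. The main obstacle is surjectivity: one must show these three families genuinely exhaust $\Valid{\splitleft{2}{n-2}}$, i.e.\ that no valid $1$-subgraph of $G_{\splitleft{2}{n-2}}$ lies outside the image of $\Psi$. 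Concretely this amounts to proving that, in any valid $T$, the arcs incident to $n-1$ and $n$ must be of one of the permitted types — both absent or independent, or both emanating from one common vertex — and that the interior block is a shifted non-crossing matching. This requires ruling out the remaining two-arc configurations at $n-1,n$ (for instance $(i,n-1)$ and $(i',n)$ with $i\ne i'$, or crossing/chained arcs) by the same parking-process analysis, which is exactly where the delicate parity conditions of Section~\ref{sec:m_n} reappear and obstruct a uniform closed argument; this is the reason the theorem is stated without a full proof.
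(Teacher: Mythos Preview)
Your proposal is correct and follows essentially the same approach as the paper, which itself only sketches the argument: both rely on tracking the MVP parking process for $\SubtoPF(\Psi(S))$, with the crucial observation that the rightward shift in Case~(3) frees spot $i+1$ for car~$1$ to land in temporarily after being bumped by car~$2$, and both point to the flip-counting arguments of Section~\ref{sec:m_n} and the non-crossing/prime-decomposition induction of Theorem~\ref{thm:nonCrossing_dec} as the templates for validity. You correctly identify surjectivity---ruling out valid $1$-subgraphs of $G_{\splitleft{2}{n-2}}$ outside the three image families---as the place where the case analysis becomes delicate, which is precisely why the paper states the result without a full proof.
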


Note that while the MVP fibres of $\splitleft{2}{n-2}$ are enumerated by Motzkin numbers, the parking functions in those fibres are not Motzkin parking functions as defined in Section~\ref{subsec:Motz_general}. Indeed, in the parking function $p = (2, 2, 6, 4, 4, 3, 2, 1) \in \FibMVP{8}{\splitleft{2}{6}} $ from Example~\ref{ex:dec_split}, the spot $2$ is preferred by three cars (equivalently the vertex $2$ has two right-edges in the corresponding subgraph), so $p$ is not a Motzkin parking function.

\section*{Acknowledgments}

The research leading to these results was partially supported by the National Natural Science Foundation of China (NSFC) under Grant Agreement No 12101505. The authors have no competing interests to declare that are relevant to the content of this article. The code that was used to generate the experimental data in this paper is available from the corresponding author on reasonable request.

\bibliographystyle{plain}
\bibliography{MVP_PF_sandpile_bibliography}

\end{document}